\newcommand{\F}{\mathcal{F}}
\newcommand{\N}{\mathbb{N}}
\newcommand{\PP}{\mathcal{P}}
\newcommand{\R}{\mathbb{R}}
\newcommand{\T}{\mathcal{T}}
\newcommand\Z{\mathbb{Z}}
\renewcommand{\epsilon}{\varepsilon}
\newcommand{\set}[1]{\left\{#1\right\}}
\newcommand{\pa}[1]{\left(#1\right)}
\newtheorem{theorem}{Theorem}
\newtheorem{corollary}[theorem]{Corollary}
\newtheorem{definition}[theorem]{Definition}
\newtheorem{lemma}[theorem]{Lemma}
\newtheorem{proposition}[theorem]{Proposition}
\newtheorem*{proposition*}{Proposition}
\theoremstyle{remark}
\newtheorem{remark}[theorem]{Remark}
\title[Intrinsic Randomness in PDEs]{An example of intrinsic randomness in deterministic PDEs}
\author[F. Flandoli]{Franco Flandoli}
\address{Scuola Normale Superiore, Piazza dei Cavalieri, 7, 56126 Pisa, Italia}
\email{\href{mailto:franco.flandoli@sns.it}{franco.flandoli@sns.it}}
\author[B. Gess]{Benjamin Gess}
\address{Max Planck Institute for Mathematics in the Sciences, Leipzig, Germany \newline \& Fakult\"at f\"ur Mathematik, Universit\"at Bielefeld, Bielefeld, Germany}
\email{\href{mailto:benjamin.gess@mis.mpg.de}{benjamin.gess@mis.mpg.de}}
\author[F. Grotto]{Francesco Grotto}
\address{Université du Luxembourg, 6 Avenue de la Fonte, 4364 Esch-sur-Alzette, Luxembourg}
\email{\href{mailto:francesco.grotto@uni.lu}{francesco.grotto@uni.lu}}
\date\today
\begin{document}

\begin{abstract}
  A new mechanism leading to a random version of Burgers' equation is introduced: 
  it is shown that the Totally Asymmetric Exclusion Process in discrete time (TASEP) 
  can be understood as an intrinsically stochastic, non-entropic weak solution of Burgers' equation on $\R$. 
  In this interpretation, the appearance of randomness in the Burgers' dynamics is caused by random additions of jumps to the solution, corresponding to the random effects in TASEP.
%
%
\end{abstract}

\maketitle

\section{Introduction}

Random solutions and stochastic versions of the Burgers' equation
\begin{equation}\label{eq:burgers}
\partial_{t}u(t,x)+u(t,x)\partial_{x}u(t,x)=0,\quad u(0,x)=u_0(x), \quad x\in\R,t\geq 0,
\end{equation}
appear in various contexts, forms and applications. 
Relevant examples include \cite{zel1970gravitational,gurbatov2012large}, where the
(multi-dimensional) Burgers' equation with Gaussian initial conditions was found in the study of the formation of large-scale structures in the Universe, \cite{kardar1986dynamic} where the Burgers' equation with a random forcing arises in the analysis of the dynamics of interfaces and in \cite{Lions2013}, where the Burgers' equation with random flux appears in the analysis of mean field systems with common noise, 
related to mean field game systems.

In the present work, we uncover another mechanism along which randomness can enter the dynamics of the Burgers' equation,
by establishing a one-to-one correspondence between a certain class of solutions to Burgers' equation
and the discrete-time totally asymmetric simple exclusion process (TASEP):
Sample functions are shown to be random (weak) solutions to the Burgers' equation, stochasticity being introduced 
by a random creation of jumps in the solution, corresponding to jumps of TASEP particles.

Different concepts of solutions to Burgers' equation \eqref{eq:burgers} are known, 
which is due to the fact that weak solutions are non-unique. A unique characterization of weak solutions in form of entropy solutions can be given in the setting of vanishing diffusion approximations, 
that is, in the case that solutions to \eqref{eq:burgers} are obtained as limits for  ($\varepsilon\downarrow0$) of
\begin{equation}\label{eq:viscous-burgers}
\partial_{t}u(t,x)+u(t,x)\partial_{x}u(t,x)=\varepsilon \partial_{xx} u(t,x).
\end{equation}
In contrast, in the setting of vanishing diffusion-dispersion approximations ($\varepsilon,\delta\downarrow0$)
\begin{equation}\label{eq:diffusion-dispersion}
\partial_{t}u(t,x)+u(t,x)\partial_{x}u(t,x)=\varepsilon \partial_{xx} u(t,x)+\delta \partial_{xxx} u(t,x)
\end{equation}
with scaling $\delta \approx \varepsilon^2$ non-classical shocks 
(\emph{i.e.} violating entropy conditions for certain entropies) are known to appear
\cite{Hayes1997,LeFloch1999}.
The characterization of the class of weak solutions produced in this setting is an open problem. 
Similarly, limits of relaxation approximations \cite{Hwang2002} to the Burgers' equation are known to converge 
to weak solutions of \eqref{eq:burgers}, which are only known to be so-called quasi-solutions
\cite{DeLellis2003}, \emph{i.e.} which have finite, but not necessarily signed entropy production.
Finally, in the last section of this work we comment on how the Burgers' equation heuristically appears to be  
related to the KPZ fixed point \cite{Matetski2016}. Also in this setting, the correct concept of solutions does not seem to be entropy solutions, see \cite[Equation (1.3) ff.]{Matetski2016} and their identification is an open problem.
The concept of a solution to \eqref{eq:burgers} thus depends on the underlying application.

We will discuss how one can exploit non-uniqueness of weak solutions to perform
\emph{random} choices when extending discrete time dynamics to weak solutions to the Burgers' equation in continuous time, producing a stochastic process whose trajectories
are non-entropic weak solutions to \eqref{eq:burgers}, a so-called \emph{intrinsically random solution}.
Intrinsic stochasticity, that is, stochastic solutions of deterministic differential equations 
with deterministic initial conditions, is an interesting and challenging concept,
for example arising in turbulence theory.
For results and discussions about this notion we refer to
\cite{Weinan2000,Weinan2000a,Kupiainen2003,LeJan2002,Mailybaev2016,Drivas2020,thalabard2020butterfly}.

Discrete-time TASEP consists in particles occupying sites of $\Z$, jumping at random to their right
under the constraint that each site may not be occupied by more than one particle at once.
We will define a closely related discrete-time particle model on $\Z$, which we call Active Bi-Directional Flow (ABDF),
starting from TASEP and considering pairs of occupied and empty positions.
ABDF model consists in particles constantly moving to their left or right,
annihilating in pairs when colliding and being generated \emph{in pairs, at random}, in certain positions.
We will show that it is conjugated to TASEP as random dynamical systems in \autoref{thm:conjugacy}.
The ABDF model shares features with other ones related to TASEP and the \emph{KPZ universality class} to which the latter belongs,
in particular the discrete-time polynuclear growth (PNG) process; however, to the best of our knowledge, the construction is original.

The behaviour of particles of ABDF model can be precisely mirrored by particular weak solutions of \eqref{eq:burgers}
composed of indicator functions of intervals, which we call \emph{quasi-particles}, 
traveling to their left or right following characteristic lines
until two of them meet. It is when quasi-particle collide that non-uniqueness is exploited to annihilate them,
and also, by time-reversal, the same can be done to generate pairs of quasi-particles out of a null profile.
The main result, \autoref{thm:mainresult}, consists in showing that this close analogy between ABDF model and a random selection
of Burgers' weak solutions can be made precise with a bijection between samples of models.

Aside from the interest of ``embedding'' discrete-time random processes into weak solutions of Burgers' equation,
this study stems from an attempt of understanding possible links between non-entropic solutions
of Burgers' equation and the aforementioned KPZ universality class and KPZ fixed points.
Hence the particular choice of TASEP as the ``source'' of intrinsic randomness,
it being a most distinguished model in the study of KPZ universality.
At this stage, what we can state to that end remains essentially conjectural,
so we collect related observations and references to the last Section of the article.

The paper is organized as follows.
In \autoref{sect ABDF} we introduce the ABDF model; in Section
\ref{sect Tasep and ABDF} we link it to TASEP and finally, in Section
\ref{Sect Burgers TASEP ABDF} we link them to the Burgers' equations.
Preliminarily, in \autoref{Sect Burgers}, we introduce the class of weak
solutions of Burgers' equations involved in the conjugacy. Some ideas about
such solutions have been identified previously \cite{Boritchev}, but the link
with TASEP described here is new.

\section{ABDF model: Active Bi-Directional Flow\label{sect ABDF}}

We begin with an informal description:
a configuration of the ABDF model is made of particles and empty positions on $\Z$,
with no more than one particle at each position. Particles are divided into
two classes: \textit{left and right particles}, according to the direction in which they are allowed to move.
Empty positions are also divided into two classes, \textit{active and inert positions},
the former being allowed to generate couples of new particles as we will detail.

Relative positions of left and right
particles is not arbitrary:  two particles are \textit{consecutive}
if they occupy positions $x_1<x_2$ such that no
particle is in between, and we postulate
that two consecutive particles of different type (one left and one right,
independently of the order) are always separated by an odd number of empty
positions. Moreover two consecutive particles of the same kind shall 
always be separated by an even number of empty positions.

Empty positions are active or inert depending on their
distance from the first particle on their left or right, and the class of the latter. 
Precisely, assume the empty position, say $x_0$, lies between consecutive particles at $x_1<x_2$. 
Let $k=x_0-x_1$; if the particle at $x_1$ is a left-particle and $k$ is odd, then the empty
position at $x_0$ is active, otherwise it is inert. If the particle at
$x_1$ is right and $k$ is odd, then the empty position is inert, otherwise
it is active. The same definition is given in terms of $x_2$:
if $h=x_2-x_0$ is odd and the particle at $x_2$ is of left-type,
then the empty position at $x_0$ is inert; if the particle
at $x_2$ is of right-type and $h$ is odd, then it is active, otherwise the empty
position is inert. It is easy to see that the two definitions coincide.

Finally, if an empty position is not between consecutive particles, 
either that we are dealing with an empty configuration,
or such position is part of an half line of empty particles. In the second case, the
rule concerning active or inert property is the same described above.
The case of all empty positions is a very special one:
active and inert positions should alternate, but they can do so in two ways, depending on the type of $x=0$. It is
important to distinguish between them, both possibly occurring during the
evolution described in the next \autoref{def ABDF dynamics}. For $x\in\Z$, we denote
\begin{equation*}
	alt_0(x)=x\, \mbox{mod }2,\quad alt_1(x)=(x+1)\, \mbox{mod }2,
\end{equation*}
and the double sequences
\[
\overline{alt}_{\alpha}(x)  =\left(  0,alt_{\alpha}\left(
x\right)  \right)  ,\qquad\alpha=0,1,
\]
the first coordinate declaring that $\overline
{alt}_{0}$ and $\overline{alt}_1$ are both zero sequences, namely they
represent ABDF configurations with all empty sites.

For an empty position, being active or inert is a (nonlocal)
\textit{consequence} of the particle configuration. Therefore, in the formal
definition of ABDF configurations we specify positions of particles  --first component of
the configuration-- and \textit{deduce} active or inert empty positions --second
component of the configuration-- by what we call \textit{activation map}. The
only exceptions are the ABDF configurations with all empty sites, where two
different activation profiles are possible.

Let us move to the rigorous definition. We associate $+1$ to right particles, $-1$
to left particles, $0$ to empty positions; then we introduce the
``activation record'' which associates $0$ to
any position where a new pair of particles cannot
arise (empty inert positions and occupied positions), and 1 to
active empty positions.

\begin{definition}\label{def ABDF config preliminary}
	Let $\Lambda_{0}$ be the set of sequences
	\begin{equation*}
		\theta:\Z\rightarrow\left\{  -1,0,1\right\}
	\end{equation*}
	which are not identically zero (we write simply $\theta\neq0$) such that
	\begin{itemize}
		\item[i)] if $x_1<x_2\in\Z$ have the properties $\theta\left(
		x_1\right)  \theta(x_2)  =-1$ and $\theta(x)
		=0$ for all $x\in\left(  x_1,x_2\right)  \cap\Z$, then the
		cardinality of $\left(  x_1,x_2\right)  \cap\Z$ is odd;
		\item[ii)] if $x_1<x_2\in\Z$ have the properties $\theta\left(
		x_1\right)  \theta(x_2)  =1$ and $\theta(x)  =0$
		for all $x\in\left(  x_1,x_2\right)  \cap\Z$, then the cardinality
		of $\left(  x_1,x_2\right)  \cap\Z$ is even.
	\end{itemize}
	For every $\theta\in\Lambda_{0}$, introduce the \emph{activation record} sequence
	\begin{equation*}
		ar\left(  \theta\right)  :\Z\rightarrow\left\{  0,1\right\},
	\end{equation*}
	defined as:
	\begin{itemize}
		\item[iii)] if $\theta\left(  x_0\right)  \in\left\{  -1,1\right\}  $ then
		$ar\left(  \theta\right)  \left(  x_0\right)  =0$;
		\item[iv)] if $\theta\left(  x_0\right)  =0$ and the set
		\[
		L\left(  x_0\right)  :=\left\{  x<x_0:x\in\Z\text{, }\theta\left(
		x\right)  \in\left\{  -1,1\right\}  \right\}
		\]
		is not empty, taking $x_1=\max L\left(  x_0\right)  $ and $k\in\N$
		such that $x_0=x_1+k$,
		\[
		ar\left(  \theta\right)  \left(  x_0\right)  =\left\vert \frac{\theta\left(
			x_1\right)  +(-1)  ^{k}}{2}\right\vert
		\]
		\item[v)] if $\theta\left(  x_0\right)  =0$ and the set
		\[
		R\left(  x_0\right)  :=\left\{  x>x_0:x\in\Z\text{, }\theta\left(
		x\right)  \in\left\{  -1,1\right\}  \right\}
		\]
		is not empty, taking $x_2=\min R\left(  x_0\right)  $ and $h\in\N$
		such that $x_0=x_2-h$,
		\[
		ar\left(  \theta\right)  \left(  x_0\right)  =\left\vert \frac{\theta\left(
			x_2\right)  +(-1)  ^{h+1}}{2}\right\vert .
		\]
	\end{itemize}
\end{definition}

The following results from a simple check.

\begin{lemma}\label{lemma coherence}
	If both $L\left(  x_0\right)  $ and $R\left(
	x_0\right)  $ are not empty, points (iv-v) above give the same definition
	of $ar\left(  \theta\right)  $.
\end{lemma}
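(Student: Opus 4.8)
The plan is to reduce each of the two formulas to a single Boolean alternative and then compare them using the parity constraints (i)--(ii). First I would record the geometric setup: since $x_1=\max L(x_0)$ and $x_2=\min R(x_0)$, there is no particle strictly between them, so $x_1<x_2$ are consecutive occupied positions enclosing the empty site $x_0$, and $\theta(x)=0$ for every $x\in(x_1,x_2)\cap\Z$. Writing $k=x_0-x_1\ge1$ and $h=x_2-x_0\ge1$ as in (iv)--(v) yields the identity $k+h=x_2-x_1$, which will carry all the parity information.

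Next I would observe that both formulas take values in $\{0,1\}$ only, and identify exactly when they equal $1$. In (iv) the quantities $\theta(x_1)$ and $(-1)^k$ both lie in $\{-1,1\}$, so $\theta(x_1)+(-1)^k\in\{-2,0,2\}$; therefore $ar(\theta)(x_0)=1$ precisely when $\theta(x_1)=(-1)^k$, and $ar(\theta)(x_0)=0$ otherwise. The identical computation for (v) shows that there $ar(\theta)(x_0)=1$ precisely when $\theta(x_2)=(-1)^{h+1}$. Hence the lemma reduces to proving the equivalence
\[
\theta(x_1)=(-1)^k\quad\Longleftrightarrow\quad\theta(x_2)=(-1)^{h+1}.
\]

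To prove this equivalence I would split on the sign of $\theta(x_1)\theta(x_2)$, which is exactly where the configuration constraints enter. If $\theta(x_1)\theta(x_2)=-1$, then (i) forces the cardinality $x_2-x_1-1$ of $(x_1,x_2)\cap\Z$ to be odd, so $k+h=x_2-x_1$ is even; thus $k$ and $h$ have the same parity, giving $(-1)^{h+1}=-(-1)^{h}=-(-1)^{k}$, and combined with $\theta(x_2)=-\theta(x_1)$ the two sides of the equivalence match. If instead $\theta(x_1)\theta(x_2)=1$, then (ii) makes $x_2-x_1-1$ even, so $k+h$ is odd, $k$ and $h$ have opposite parity, and $(-1)^{h+1}=(-1)^{k}$; together with $\theta(x_2)=\theta(x_1)$ this again yields the equivalence. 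In both cases the two definitions of $ar(\theta)(x_0)$ coincide.

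I expect no real obstacle here -- this is the \emph{simple check} promised in the text. The only place demanding care is the parity bookkeeping: correctly converting the cardinality of $(x_1,x_2)\cap\Z$ into the parity of $k+h$, and keeping track of the extra $+1$ in the exponent of formula (v), which encodes the asymmetry between measuring distance leftward to $x_1$ and rightward to $x_2$.
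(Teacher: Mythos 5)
Your proof is correct, and it coincides with what the paper intends: the paper offers no written argument for this lemma (it is dismissed as ``a simple check''), and your reduction of both formulas to the Boolean conditions $\theta(x_1)=(-1)^k$ and $\theta(x_2)=(-1)^{h+1}$, followed by the parity case split on $\theta(x_1)\theta(x_2)=\pm1$ using (i)--(ii) and the identity $k+h=x_2-x_1$, is exactly that check carried out in full. No gaps: the hypotheses of (i)--(ii) do apply since $\theta$ vanishes on all of $(x_1,x_2)\cap\Z$ by maximality of $x_1$ and minimality of $x_2$ together with $\theta(x_0)=0$.
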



\begin{definition}
\label{def ABDF config}A configuration of the ABDF model is a map%
\[
(\theta,act)  :\Z\rightarrow\left\{  -1,0,1\right\}
\times\left\{  0,1\right\}
\]
with the following properties:
\begin{itemize}
	\item[a)] if $\theta=0$, then either $act=alt_0$ or $act=alt_1$ (in other words,
	either $(\theta,act)  =\overline{alt}_{0}$ or $\left(
	\theta,act\right)  =\overline{alt}_1$);
	\item[b)] if $\theta\neq0$, then $\theta\in\Lambda_{0}$ and $act=ar\left(
	\theta\right)  $, where the set $\Lambda_{0}$ and the map $ar$ are introduced
	in \autoref{def ABDF config preliminary}.
\end{itemize}
The set of all ABDF configurations will be denoted by $\Lambda$.
\end{definition}

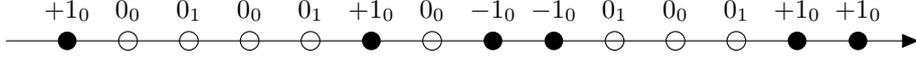
\begin{figure}
	\centering
	\begin{tikzpicture}[line cap=round,line join=round,>=triangle 45,x=0.8cm,y=0.8cm]
	\clip(1,0) rectangle (18,4);
	\draw [->] (1,1) -- (16,1);
	\fill [color=black] (2,1) circle (3.5pt);
	\draw (2,1.5) node {$+1_0$};
	\draw [color=black] (3,1) circle (3.5pt);
	\draw (3,1.5) node {$0_0$};
	\draw [color=black] (4,1) circle (3.5pt);
	\draw (4,1.5) node {$0_1$};
	\draw [color=black] (5,1) circle (3.5pt);
	\draw (5,1.5) node {$0_0$};
	\draw [color=black] (6,1) circle (3.5pt);
	\draw (6,1.5) node {$0_1$};
	\fill [color=black] (7,1) circle (3.5pt);
	\draw (7,1.5) node {$+1_0$};
	\draw [color=black] (8,1) circle (3.5pt);
	\draw (8,1.5) node {$0_0$};
	\fill [color=black] (9,1) circle (3.5pt);
	\draw (9,1.5) node {$-1_0$};
	\fill [color=black] (10,1) circle (3.5pt);
	\draw (10,1.5) node {$-1_0$};
	\draw [color=black] (11,1) circle (3.5pt);
	\draw (11,1.5) node {$0_1$};
	\draw [color=black] (12,1) circle (3.5pt);
	\draw (12,1.5) node {$0_0$};
	\draw [color=black] (13,1) circle (3.5pt);
	\draw (13,1.5) node {$0_1$};
	\fill [color=black] (14,1) circle (3.5pt);
	\draw (14,1.5) node {$+1_0$};
	\fill [color=black] (15,1) circle (3.5pt);
	\draw (15,1.5) node {$+1_0$};
	\end{tikzpicture}
	\caption{A piece of an ABDF configuration. Numbers $\pm 1$ and $0$ denote particle type or empty sites,
		subscripts are the values of activation record.}
	\label{fig:abdfconfig} 
\end{figure}

\autoref{fig:abdfconfig} represents a piece of an ABDF configuration. 
The example makes it apparent how the number of empty positions between non-empty ones 
is regulated by the concordance of signs of the extremes.

Let us come to the description of ABDF dynamics.
All right particles move to the right by one position at every time step, all
left particles to the left: unlike in exclusion processes, these jumps can not be
prevented by an occupied arrival positions, since \textit{all} particles move.
All active empty positions $x_0$ may generate, at random with probability
1/2, a pair of particles: a left-particle in $x_0-1$ and a right-particle at $x_0+1$.
It often happens that two particles meet at one position: a right particle
which moved from $x-1$ to $x$ and a left particle which moved from $x+1$ to
$x$ arrive at the same time $t$ at $x$. In such a case, the two particles
disappear, annihilating each other, and position $x$ becomes empty.

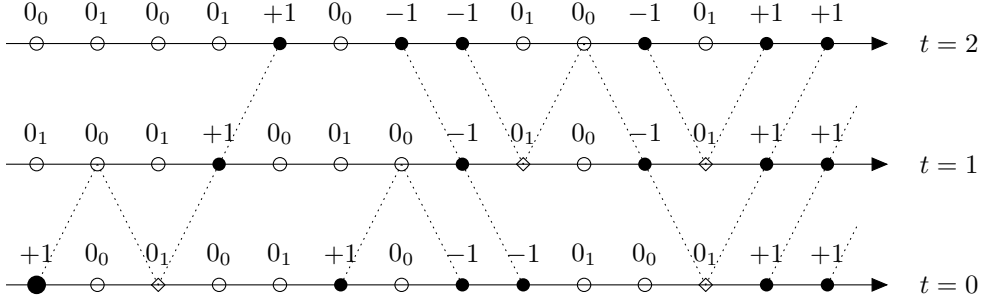
\begin{figure}
	\centering
	\begin{tikzpicture}[line cap=round,line join=round,>=triangle 45,x=0.8cm,y=0.8cm]
	\clip(1.5,0) rectangle (18,7);
	\draw [->] (1.5,1) -- (16,1);
	\draw [->] (1.5,3) -- (16,3);
	\draw [->] (1.5,5) -- (16,5);
	\draw [-,dotted] (7,1) -- (8,3);
	\draw [-,dotted] (9,1) -- (8,3);
	\draw [-,dotted] (10,1) -- (9,3);
	\draw [-,dotted] (14,1) -- (15,3);
	\draw [-,dotted] (15,1) -- (15.5,2);
	\draw [-,dotted] (4,1) -- (3,3);
	\draw [-,dotted] (4,1) -- (5,3);
	\draw [-,dotted] (13,1) -- (14,3);
	\draw [-,dotted] (13,1) -- (12,3);
	\draw [-,dotted] (2,1) -- (3,3);
	\draw [-,dotted] (5,3) -- (6,5);
	\draw [-,dotted] (9,3) -- (8,5);
	\draw [-,dotted] (10,3) -- (9,5);
	\draw [-,dotted] (10,3) -- (11,5);
	\draw [-,dotted] (12,3) -- (11,5);
	\draw [-,dotted] (14,3) -- (15,5);
	\draw [-,dotted] (15,3) -- (15.5,4);
	\draw [-,dotted] (13,3) -- (12,5);
	\draw [-,dotted] (13,3) -- (14,5);
	\fill [color=black] (2,1) circle (3.5pt);
	\fill [color=black] (7,1) circle (2.5pt);
	\fill [color=black] (9,1) circle (2.5pt);
	\fill [color=black] (10,1) circle (2.5pt);
	\fill [color=black] (14,1) circle (2.5pt);
	\fill [color=black] (15,1) circle (2.5pt);
	\draw [color=black] (3,1) circle (2.5pt);
	\draw [color=black] (4,1) ++(-2.5pt,0 pt) -- ++(2.5pt,2.5pt)--++(2.5pt,-2.5pt)--++(-2.5pt,-2.5pt)--++(-2.5pt,2.5pt);
	\draw [color=black] (5,1) circle (2.5pt);
	\draw [color=black] (6,1) circle (2.5pt);
	\draw [color=black] (8,1) circle (2.5pt);
	\draw [color=black] (11,1) circle (2.5pt);
	\draw [color=black] (12,1) circle (2.5pt);
	\draw [color=black] (13,1) ++(-2.5pt,0 pt) -- ++(2.5pt,2.5pt)--++(2.5pt,-2.5pt)--++(-2.5pt,-2.5pt)--++(-2.5pt,2.5pt);
	\draw [color=black] (3,3) circle (2.5pt);
	\draw [color=black] (8,3) circle (2.5pt);
	\fill [color=black] (9,3) circle (2.5pt);
	\fill [color=black] (15,3) circle (2.5pt);
	\fill [color=black] (5,3) circle (2.5pt);
	\fill [color=black] (14,3) circle (2.5pt);
	\fill [color=black] (12,3) circle (2.5pt);
	\draw [color=black] (2,3) circle (2.5pt);
	\draw [color=black] (4,3) circle (2.5pt);
	\draw [color=black] (6,3) circle (2.5pt);
	\draw [color=black] (7,3) circle (2.5pt);
	\draw [color=black] (10,3) ++(-2.5pt,0 pt) -- ++(2.5pt,2.5pt)--++(2.5pt,-2.5pt)--++(-2.5pt,-2.5pt)--++(-2.5pt,2.5pt);
	\draw [color=black] (11,3) circle (2.5pt);
	\draw [color=black] (13,3) ++(-2.5pt,0 pt) -- ++(2.5pt,2.5pt)--++(2.5pt,-2.5pt)--++(-2.5pt,-2.5pt)--++(-2.5pt,2.5pt);
	\fill [color=black] (6,5) circle (2.5pt);
	\fill [color=black] (8,5) circle (2.5pt);
	\fill [color=black] (9,5) circle (2.5pt);
	\draw [color=black] (11,5) circle (2.5pt);
	\fill [color=black] (15,5) circle (2.5pt);
	\draw [color=black] (2,5) circle (2.5pt);
	\draw [color=black] (3,5) circle (2.5pt);
	\draw [color=black] (4,5) circle (2.5pt);
	\draw [color=black] (5,5) circle (2.5pt);
	\draw [color=black] (7,5) circle (2.5pt);
	\draw [color=black] (10,5) circle (2.5pt);
	\fill [color=black] (12,5) circle (2.5pt);
	\draw [color=black] (13,5) circle (2.5pt);
	\fill [color=black] (14,5) circle (2.5pt);
	\draw (2,1.5) node {$+1$};
	\draw (3,1.5) node {$0_0$};
	\draw (4,1.5) node {$0_1$};
	\draw (5,1.5) node {$0_0$};
	\draw (6,1.5) node {$0_1$};
	\draw (7,1.5) node {$+1$};
	\draw (8,1.5) node {$0_0$};
	\draw (9,1.5) node {$-1$};
	\draw (10,1.5) node {$-1$};
	\draw (11,1.5) node {$0_1$};
	\draw (12,1.5) node {$0_0$};
	\draw (13,1.5) node {$0_1$};
	\draw (14,1.5) node {$+1$};
	\draw (15,1.5) node {$+1$};
	\draw (2,3.5) node {$0_1$};
	\draw (3,3.5) node {$0_0$};
	\draw (4,3.5) node {$0_1$};
	\draw (5,3.5) node {$+1$};
	\draw (6,3.5) node {$0_0$};
	\draw (7,3.5) node {$0_1$};
	\draw (8,3.5) node {$0_0$};
	\draw (9,3.5) node {$-1$};
	\draw (10,3.5) node {$0_1$};
	\draw (11,3.5) node {$0_0$};
	\draw (12,3.5) node {$-1$};
	\draw (13,3.5) node {$0_1$};
	\draw (14,3.5) node {$+1$};
	\draw (15,3.5) node {$+1$};
	\draw (2,5.5) node {$0_0$};
	\draw (3,5.5) node {$0_1$};
	\draw (4,5.5) node {$0_0$};
	\draw (5,5.5) node {$0_1$};
	\draw (6,5.5) node {$+1$};
	\draw (7,5.5) node {$0_0$};
	\draw (8,5.5) node {$-1$};
	\draw (9,5.5) node {$-1$};
	\draw (10,5.5) node {$0_1$};
	\draw (11,5.5) node {$0_0$};
	\draw (12,5.5) node {$-1$};
	\draw (13,5.5) node {$0_1$};
	\draw (14,5.5) node {$+1$};
	\draw (15,5.5) node {$+1$};
	\draw (17,1) node {$t=0$};
	\draw (17,3) node {$t=1$};
	\draw (17,5) node {$t=2$};
	\end{tikzpicture}
	\caption{A sample of ABDF dynamics starting from the configuration of \autoref{fig:abdfconfig}.
		Dotted lines track movements of particles. ``Activated'' empty sites have the empty circle replaced by an empty square.}
	\label{fig:abdfdynamics} 
\end{figure}

We have to check that these rules are coherent and that
they give rise to ABDF configurations described above.

\begin{definition}\label{def ABDF dynamics}Let $\Omega=\left\{  0,1\right\}  ^{\N%
\times\Z}$, with the $\sigma$-algebra $\mathcal{F}$ generated by
cylinder sets, and the product probability measure $P$ of Bernoulli
$p=\frac12$ random variables. Given $\omega\in\Omega$, we write
$\omega\left(  t,x\right)  $ for its $\left(  t,x\right)  $-coordinate,
$\left(  t,x\right)  \in\N\times\Z$ and write $\varkappa
\left(  t,x\right)  :=1-\omega\left(  t,x\right)  $ for the complementary value.

Then, based on the probability space $\left(  \Omega,\mathcal{F},P\right)  $,
we introduce a family of maps
\[
\T_{\text{ABDF}}\left(  t,\omega,\cdot\right)  :\Lambda
\rightarrow\left(  \left\{  -1,0,1\right\}  \times\left\{  0,1\right\}
\right)  ^\Z%
\]
indexed by $t\in\N$ and $\omega\in\Omega$, defined as follows. Denote
\[
\T_{\text{ABDF}}\left(  t,\omega,(\theta,act)  \right)
=\left(  \T_{\text{ABDF}}\left(  t,\omega,(\theta,act)
\right)  _1,\T_{\text{ABDF}}\left(  t,\omega,\left(  \theta
,act\right)  \right)  _2\right),
\]
where we recall that $act=ar\left(  \theta\right)  $ unless $\theta=0$.
The map $\T_{\text{ABDF}}\left(  0,\omega,\cdot\right)  $ is the
identity. For $t>0$, the first component is defined as
\begin{align}
\T_{\text{ABDF}}\left(  t,\omega,(\theta,act)  \right)
_1(x)   & :=\max\left(  \theta(x-1)  ,0\right)
+act(x-1)  \varkappa\left(  t-1,x-1\right) \label{ABDF map}\\
& +\min\left(  \theta(x+1)  ,0\right)  -act(x+1)
\varkappa\left(  t-1,x+1\right) \nonumber
\end{align}
for every $x\in\Z$. For $t>0$, if $\T_{\text{ABDF}}\left(
t,\omega,(\theta,act)  \right)  _1$ is not the identically null sequence, the second component is defined by
\[
\T_{\text{ABDF}}\left(  t,\omega,(\theta,act)  \right)
_2=ar\left(  \T_{\text{ABDF}}\left(  t,\omega,\left(
\theta,act\right)  \right)  _1\right)
\]
with $ar\left(  \cdot\right)$ as in \autoref{def ABDF config preliminary}. 
If $\T_{\text{ABDF}}\left(t,\omega,(\theta,act)  \right)  _1=0$ then $\T%
_{\text{ABDF}}\left(  t,\omega,(\theta,act)  \right)  _2$ is
either $alt_0$ or $alt_1$. It is equal to $alt_1$ in each one of the
following three cases:
\begin{gather*}
	\theta(0)=-1,\\
	act(-1)= 1 \quad \text{and} \quad \omega\left(  t-1,-1\right)= 1,\\
	act(0)=1 \quad \text{and} \quad \omega\left(  t-1,0\right)=0,
\end{gather*}
 otherwise it is equal to $alt_0$.
\end{definition}

It is not easy to see right away why we set $\T_{\text{ABDF}}\left(t,\omega,(\theta,act)  \right)_2=alt_1$ 
precisely in these three cases:
this will become clear in the correspondence between a TASEP configuration $\eta$ and $\theta$.

The quantity $\T_{\text{ABDF}}\left(  t,\omega,\left(  \theta
,act\right)  \right)  _1(x)  $ can only take values in
$\left\{  -1,0,1\right\}  $. We shall prove that it satisfies
\autoref{def ABDF config preliminary}, (i)-(ii) and therefore $\T_{\text{ABDF}}\left(  t,\omega,\theta\right)  \in\Lambda$. 
To minimize double proofs, we postpone this fact to the verification of the link with TASEP 
(see \autoref{thm:conjugacy} below).

Once this is proved, one can introduce the ABDF random dynamical system,
of which $\T_{\text{ABDF}}\left(  t,\omega,\cdot\right)$ is just the 1-step dynamics at time $t$.
We let $\phi_{\text{ABDF}}\left(0,\omega\right)=id$, and for $t>0$, $t\in\N$,
\begin{equation*}
\phi_{\text{ABDF}}\left(  t,\omega\right):=\T_{\text{ABDF}%
}\left(  t,\omega\right)  \circ\T_{\text{ABDF}}\left(  t-1,\omega
\right)  \circ\cdot\cdot\cdot\circ\T_{\text{ABDF}}\left(
1,\omega\right),
\end{equation*}
so that it holds the random dynamical system property
\[
\phi_{\text{ABDF}}\left(  t,\omega\right)  \circ\phi_{\text{ABDF}}\left(
s,\omega\right)  =\phi_{\text{ABDF}}\left(  t+s,\omega\right), \quad
t,s\in\N,\, \omega\in\Omega.
\]

\section{TASEP, its pairs and ABDF\label{sect Tasep and ABDF}}

A TASEP configuration is a map
\[
\eta:\Z\rightarrow\left\{  0,1\right\}  .
\]
When $\eta(x)  =1$, we say that $x$ is occupied by a
particle; when $\eta(x)  =0$, we say that $x$ is empty.

TASEP dynamics in discrete time $t\in\N$ consists in
particles moving to the right by one position with probability $\frac12$,
with simultaneous independent jumps, 
aborted when the arrival position is occupied. More precisely, given a configuration $\eta$ at time $t-1\in
\N$, a particle at position $x\in\Z$ (which means $\eta
_{t-1}(x)  =1$) has probability $\frac12$ to jump on the
right at time $t$ (namely $\eta_{t}(x+1)  =1$), but the jump is
aborted if $\eta_{t-1}(x+1)  =1$.

Using the probability space $\left(  \Omega,\mathcal{F},P\right)  $ defined
above, when a particle is at time $t-1\in\N$ at positions
$x\in\Z$, it jumps if both $\omega\left(  t-1,x\right)  =1$ and the
position $x+1$ is free. Denote by $\T_{\text{TASEP}}\left(
t,\omega,\cdot\right)  $ the random map which associates to a given TASEP
configuration $\eta$ and a given random choice $\omega\in\Omega$ the
subsequent, one-time step, TASEP configuration. 
Heuristic prescriptions are summarized in
\[
\T_{\text{TASEP}}\left(  t,\omega,\eta\right)  (x)
=\left\{
\begin{array}
[c]{ccc}%
\eta(x)  & \text{if} & \eta(x)  =\eta\left(
x+1\right)  =1\\
\varkappa\left(  t-1,x\right)  & \text{if} & \eta(x)
=1,\eta(x+1)  =0\\
\omega\left(  t-1,x-1\right)  & \text{if} & \eta(x)
=0,\eta(x-1)  =1\\
\eta(x)  & \text{if} & \eta(x)  =\eta\left(
x-1\right)  =0
\end{array}
\right.,
\]
or equivalently
\begin{multline*}
	\T_{\text{TASEP}}\left(  t,\omega,\eta\right)  (x)\\
	=\left\{
	\begin{array}
	[c]{ccc}%
	\varkappa\left(  t-1,x\right)  \eta(x)  +\omega\left(
	t-1,x\right)  \eta(x+1)  & \text{if} & \eta(x)  =1\\
	\varkappa\left(  t-1,x-1\right)  \eta(x)  +\omega\left(
	t-1,x-1\right)  \eta(x-1)  & \text{if} & \eta(x)  =0
	\end{array}
	\right.,
\end{multline*}
which gives rise to the following rigorous Definition.

\begin{definition}
\label{def Tasep}Let $\left(  \Omega,\mathcal{F},P\right)  $ be the
probability space of \autoref{def ABDF dynamics}. We define the family
of maps $\T_{\text{TASEP}}\left(  t,\omega,\cdot\right)  $ on
$\left\{  0,1\right\}  ^\Z$, indexed by $t\in\N$ and
$\omega\in\Omega$, by
\begin{align}
\T_{\text{TASEP}}\left(  t,\omega,\eta\right)  (x)   &
=\left[  \varkappa\left(  t-1,x\right)  \eta(x)  +\omega\left(
t-1,x\right)  \eta(x+1)  \right]  \eta(x)
\label{Tasep map}\\
& +\left[  \varkappa\left(  t-1,x-1\right)  \eta(x)
+\omega\left(  t-1,x-1\right)  \eta(x-1)  \right]  \left(
1-\eta(x)  \right) \nonumber
\end{align}
when $t>0$, $\T_{\text{TASEP}}\left(  0,\omega,\cdot\right)  =id$.
\end{definition}
\noindent
As for ABDF above, we may introduce TASEP random dynamical system by setting
$\phi_{\text{TASEP}}\left(0,\omega\right)=id$ and, for $t>0$, $t\in\N$,
\begin{equation*}
\phi_{\text{TASEP}}\left(  t,\omega\right):=\T_{\text{TASEP}%
}\left(  t,\omega\right)  \circ\T_{\text{TASEP}}\left(
t-1,\omega\right)  \circ\cdot\cdot\cdot\circ\T_{\text{TASEP}}\left(
1,\omega\right),
\end{equation*}
the latter satisfying the random dynamical system property%
\[
\phi_{\text{TASEP}}\left(  t,\omega\right)  \circ\phi_{\text{TASEP}}\left(
s,\omega\right)  =\phi_{\text{TASEP}}\left(  t+s,\omega\right), \quad t,s\in\N, \, \omega\in\Omega.
\]
We now turn our attention to pairs in TASEP configurations: pairs of
particles and pairs of empty positions.

\begin{definition}
\label{def pair operator}The pair operator
\begin{align*}
 \PP:\left\{  0,1\right\}  ^\Z\rightarrow\left(  \left\{
-1,0,1\right\}  \times\left\{  0,1\right\}  \right)  ^\Z, \quad 
 \PP(\eta)  (x)=\left(   \PP
(\eta)  _1(x)  , \PP(\eta)
_2(x)  \right),  \quad x\in\Z,
\end{align*}
is the function defined as follows:

a) for every $\eta\in\left\{  0,1\right\}  ^\Z$ and $x\in\Z$,
\[
 \PP(\eta)  _1(x)  =1-\eta(x)-\eta(x+1),
\]

b) if $ \PP(\eta)  _1\neq0$, then $ \PP\left(
\eta\right)  _2=ar\left(   \PP(\eta)  _1\right)  $,
with $ar$ as in \autoref{def ABDF config preliminary},

c) if $ \PP(\eta)  _1=0$, namely $\eta=alt_{\alpha}$ for
$\alpha=0$ or $1$, then $ \PP(\eta)  _2=\eta$; in other
words, $ \PP\left(  alt_{\alpha}\right)  =\overline{alt}_{\alpha}$,
$\alpha=0,1$.
\end{definition}


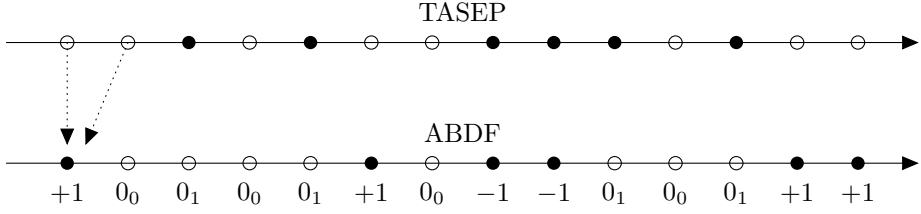
\begin{figure}
	\centering
	\begin{tikzpicture}[line cap=round,line join=round,>=triangle 45,x=0.8cm,y=0.8cm]
	\clip(1,0) rectangle (18,5);
	\draw [->] (1,1) -- (16,1);
	\draw [->] (1,3) -- (16,3);
	\draw [->,dotted] (2,3) -- (2,1.3);
	\draw [->,dotted] (3,3) -- (2.3,1.3);
	\draw [color=black] (2,3) circle (2.5pt);
	\draw [color=black] (3,3) circle (2.5pt);
	\fill [color=black] (4,3) circle (2.5pt);
	\draw [color=black] (5,3) circle (2.5pt);
	\fill [color=black] (6,3) circle (2.5pt);
	\draw [color=black] (7,3) circle (2.5pt);
	\draw [color=black] (8,3) circle (2.5pt);
	\fill [color=black] (9,3) circle (2.5pt);
	\fill [color=black] (10,3) circle (2.5pt);
	\fill [color=black] (11,3) circle (2.5pt);
	\draw [color=black] (12,3) circle (2.5pt);
	\fill [color=black] (13,3) circle (2.5pt);
	\draw [color=black] (14,3) circle (2.5pt);
	\draw [color=black] (15,3) circle (2.5pt);
	\fill [color=black] (2,1) circle (2.5pt);
	\draw [color=black] (3,1) circle (2.5pt);
	\draw [color=black] (4,1) circle (2.5pt);
	\draw [color=black] (5,1) circle (2.5pt);
	\draw [color=black] (6,1) circle (2.5pt);
	\fill [color=black] (7,1) circle (2.5pt);
	\draw [color=black] (8,1) circle (2.5pt);
	\fill [color=black] (9,1) circle (2.5pt);
	\fill [color=black] (10,1) circle (2.5pt);
	\draw [color=black] (11,1) circle (2.5pt);
	\draw [color=black] (12,1) circle (2.5pt);
	\draw [color=black] (13,1) circle (2.5pt);
	\fill [color=black] (14,1) circle (2.5pt);
	\fill [color=black] (15,1) circle (2.5pt);
	\draw (2,0.5) node {$+1$};
	\draw (3,0.5) node {$0_0$};
	\draw (4,0.5) node {$0_1$};
	\draw (5,0.5) node {$0_0$};
	\draw (6,0.5) node {$0_1$};
	\draw (7,0.5) node {$+1$};
	\draw (8,0.5) node {$0_0$};
	\draw (9,0.5) node {$-1$};
	\draw (10,0.5) node {$-1$};
	\draw (11,0.5) node {$0_1$};
	\draw (12,0.5) node {$0_0$};
	\draw (13,0.5) node {$0_1$};
	\draw (14,0.5) node {$+1$};
	\draw (15,0.5) node {$+1$};
	\draw (8.5,1.5) node {ABDF};
	\draw (8.5,3.5) node {TASEP};
	\end{tikzpicture}
	\caption{The TASEP configuration associated with the ABDF one of \autoref{fig:abdfconfig}.
		Dotted arrows show (for the right-most site) two TASEP site determining
		the state of a ABDF one.}
	\label{fig:abdffromtasep} 
\end{figure}

The link between TASEP pairs and ABDF configurations is the following conjugation result
between the random dynamical systems $\phi_{\text{ABDF}}\left(
t,\omega\right)  $ and $\phi_{\text{TASEP}}\left(  t,\omega\right)  $.

\begin{theorem}\label{thm:conjugacy}
	\begin{itemize}
		\item[a)] The pair map $ \PP$ is a bijection between $\left\{0,1\right\}  ^\Z$ and $\Lambda$;
		\item[b)] For every $\eta\in\left\{  0,1\right\}  ^\Z$,
		$t\in\N$, $\omega\in\Omega$,
		\[
		\T_{\text{ABDF}}\left(  t,\omega, \PP(\eta)
		\right)  = \PP\left(  \T_{\text{TASEP}}\left(  t,\omega
		,\eta\right)  \right),
		\]
		in particular, $\T_{\text{ABDF}}\left(  t,\omega,\Lambda\right)\subset\Lambda$.
	\end{itemize}
\end{theorem}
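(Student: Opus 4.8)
The whole proof rests on translating the two components of $\PP(\eta)$ into explicit local expressions in $\eta$, after which both assertions become bookkeeping. Writing $\theta=\PP(\eta)_1$, the defining formula gives $\theta(x)=1-\eta(x)-\eta(x+1)$, so $\theta(x)=1$ exactly when $\eta(x)=\eta(x+1)=0$, $\theta(x)=-1$ exactly when $\eta(x)=\eta(x+1)=1$, and $\theta(x)=0$ otherwise; consequently $\max(\theta(x),0)=(1-\eta(x))(1-\eta(x+1))$ and $\min(\theta(x),0)=-\eta(x)\eta(x+1)$. The key point, which I would isolate as a lemma, is the identity
\[
\PP(\eta)_2(x)=ar(\theta)(x)=\eta(x)(1-\eta(x+1)), \qquad x\in\Z,
\]
i.e. the active empty sites are exactly the TASEP ``descents'' (a particle immediately followed by a hole). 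To prove it I would fix $x_0$ with $\theta(x_0)=0$ and let $x_1=\max L(x_0)$, $k=x_0-x_1$; since $\theta$ vanishes on $(x_1,x_0)$, the sequence $\eta$ alternates there and $\eta(x_1+1)=(1-\theta(x_1))/2$ is fixed by the sign of $\theta(x_1)$, whence $\eta(x_0)=(1-\theta(x_1))/2\oplus((k-1)\bmod 2)$. Comparing this with $ar(\theta)(x_0)=\abs{(\theta(x_1)+(-1)^k)/2}$ case by case in the sign of $\theta(x_1)$ gives $ar(\theta)(x_0)=\eta(x_0)$, which equals $\eta(x_0)(1-\eta(x_0+1))$ because $\eta(x_0)\neq\eta(x_0+1)$ there; the cases $\theta(x_0)=\pm1$ are immediate from (iii). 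Half-lines are handled by the identical parity count and the all-empty configuration separately.

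For part (a), the same alternation argument yields $\PP(\eta)\in\Lambda$: between consecutive nonzero values $\theta(x_1),\theta(x_2)$ the endpoints $\eta(x_1+1)$ and $\eta(x_2)$ are forced by the signs of $\theta(x_1),\theta(x_2)$, and the alternation of $\eta$ on $(x_1,x_2]$ forces the cardinality of $(x_1,x_2)\cap\Z$ to be odd when $\theta(x_1)\theta(x_2)=-1$ and even when $\theta(x_1)\theta(x_2)=1$, which are exactly (i)--(ii). For bijectivity I would invert $\PP$ through the recursion $\eta(x+1)=1-\theta(x)-\eta(x)$: when $\theta\neq0$ I anchor $\eta$ at a site where $\theta=\pm1$ (there $\eta$ is determined outright) and propagate in both directions, the parity conditions being exactly what guarantees that the recursion never leaves $\{0,1\}$ and is globally consistent; this produces the unique preimage and gives injectivity simultaneously. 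The degenerate case $\theta\equiv0$ corresponds to $\eta=alt_\alpha$, where the second component $\overline{alt}_\alpha$ distinguishes the two preimages, matching clause (c) of \autoref{def pair operator}.

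For part (b), set $\eta'=\T_{\text{TASEP}}(t,\omega,\eta)$. Substituting the three local expressions above into \eqref{ABDF map} writes $\T_{\text{ABDF}}(t,\omega,\PP(\eta))_1(x)$ as a polynomial in $\eta(x-1),\dots,\eta(x+2)$ and $\omega(t-1,x\pm1)$. On the other side I would expand $1-\eta'(x)-\eta'(x+1)$ directly from \eqref{Tasep map}, using $\eta(x)^2=\eta(x)$ and $\omega+\varkappa=1$; the two terms carrying $\omega(t-1,x)$ combine into $\omega(t-1,x)\eta(x)$ and merge with $\varkappa(t-1,x)\eta(x)$ to leave $\eta(x)$, after which the remaining expression coincides term by term with the one produced by \eqref{ABDF map}, so the first components agree. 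For the second components, if the common first component is nonzero both sides equal $ar$ of it and hence coincide; if it is zero then $\eta'=alt_\alpha$ and one must check that the three special cases of \autoref{def ABDF dynamics} select $alt_1$ precisely when $\alpha=1$. I would verify this by evaluating $\eta'(0)$ from \eqref{Tasep map}: splitting on the value of $\eta(0)$ one finds $\eta'(0)=1$ exactly in the situations $\{\eta(0)=\eta(1)=1\}$, $\{\eta(-1)=1,\eta(0)=0,\omega(t-1,-1)=1\}$ and $\{\eta(0)=1,\eta(1)=0,\omega(t-1,0)=0\}$, which translate via the dictionary into $\theta(0)=-1$, into $act(-1)=1$ with $\omega(t-1,-1)=1$, and into $act(0)=1$ with $\omega(t-1,0)=0$; since $\alpha=1\Leftrightarrow\eta'(0)=1$, this is the asserted match and also explains the otherwise opaque clause in \autoref{def ABDF dynamics}. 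Finally $\T_{\text{ABDF}}(t,\omega,\Lambda)\subset\Lambda$ is immediate, since every element of $\Lambda$ is $\PP(\eta)$ by (a) and $\T_{\text{ABDF}}(t,\omega,\PP(\eta))=\PP(\eta')\in\Lambda$.

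The main obstacle is the activation-record identity of the first paragraph: reconciling the piecewise-arithmetic definition of $ar(\theta)$ with the simple descent formula $\eta(x)(1-\eta(x+1))$ demands a careful parity count along each maximal alternating stretch, together with the separate treatment of half-lines and of the all-empty configuration. Once this dictionary is in hand, part (a) reduces to a short alternation/recursion argument and the first-component identity in part (b) is a routine algebraic simplification; the only remaining delicate point is the all-empty case, where the three conditions of \autoref{def ABDF dynamics} must be shown to encode exactly $\eta'(0)$.
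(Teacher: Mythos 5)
Your proposal is correct and follows essentially the same route as the paper: your ``descent'' identity $ar(\PP(\eta)_1)(x)=\eta(x)\left(1-\eta(x+1)\right)$ is precisely the paper's key lemma (\autoref{lemma key ident}), your alternation/recursion argument for part (a) is the paper's inversion formula combined with its maximal-alternating-segment lemmas, and your first-component computation in (b) is the paper's Step 1 almost verbatim. The only (cosmetic) deviation is in the all-empty case, where you condense the paper's two separate steps ($\alpha=1$: one of the three conditions holds; $\alpha=0$: none does) into the single equivalence that the three conditions of \autoref{def ABDF dynamics} hold exactly when $\T_{\text{TASEP}}\left(t,\omega,\eta\right)(0)=1$, i.e.\ $\alpha=1$ --- a correct and slightly tighter packaging of the same verification.
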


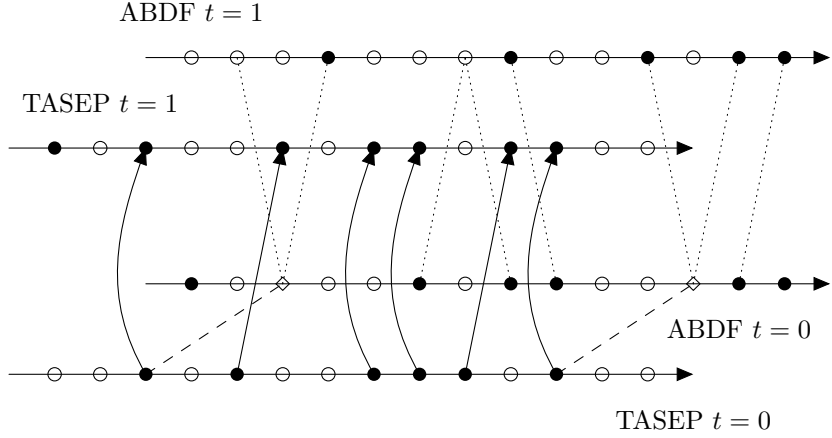
\begin{figure}
	\centering
	\begin{tikzpicture}[line cap=round,line join=round,>=triangle 45,x=0.6cm,y=0.6cm]
	\clip(0,-1) rectangle (23,11);
	\draw [->] (1,1) -- (16,1);
	\draw [->] (4,3) -- (19,3);
	\draw [->] (1,6) -- (16,6);
	\draw [->] (4,8) -- (19,8);
	\draw [dash pattern=on 4pt off 4pt] (13,1)-- (16,3);
	\draw [dash pattern=on 4pt off 4pt] (4,1)-- (7,3);
	\draw [dotted] (7,3)-- (6,8);
	\draw [dotted] (7,3)-- (8,8);
	\draw [dotted] (10,3)-- (11,8);
	\draw [dotted] (12,3)-- (11,8);
	\draw [dotted] (13,3)-- (12,8);
	\draw [dotted] (16,3)-- (15,8);
	\draw [dotted] (16,3)-- (17,8);
	\draw [dotted] (17,3)-- (18,8);
	\draw [->] (4,1) to [out=120,in=250] (4,6);
	\draw [->] (6,1) -- (7,6);
	\draw [->] (9,1) to [out=120,in=250] (9,6);
	\draw [->] (10,1) to [out=120,in=250] (10,6);
	\draw [->] (11,1) -- (12,6);
	\draw [->] (13,1) to [out=120,in=250] (13,6);
	\draw [color=black] (2,1) circle (2.5pt);
	\draw [color=black] (3,1) circle (2.5pt);
	\fill [color=black] (4,1) circle (2.5pt);
	\draw [color=black] (5,1) circle (2.5pt);
	\fill [color=black] (6,1) circle (2.5pt);
	\draw [color=black] (7,1) circle (2.5pt);
	\draw [color=black] (8,1) circle (2.5pt);
	\fill [color=black] (9,1) circle (2.5pt);
	\fill [color=black] (10,1) circle (2.5pt);
	\fill [color=black] (11,1) circle (2.5pt);
	\draw [color=black] (12,1) circle (2.5pt);
	\fill [color=black] (13,1) circle (2.5pt);
	\draw [color=black] (14,1) circle (2.5pt);
	\draw [color=black] (15,1) circle (2.5pt);
	\fill [color=black] (2,6) circle (2.5pt);
	\draw [color=black] (3,6) circle (2.5pt);
	\fill [color=black] (4,6) circle (2.5pt);
	\draw [color=black] (5,6) circle (2.5pt);
	\draw [color=black] (6,6) circle (2.5pt);
	\fill [color=black] (7,6) circle (2.5pt);
	\draw [color=black] (8,6) circle (2.5pt);
	\fill [color=black] (9,6) circle (2.5pt);
	\fill [color=black] (10,6) circle (2.5pt);
	\draw [color=black] (11,6) circle (2.5pt);
	\fill [color=black] (12,6) circle (2.5pt);
	\fill [color=black] (13,6) circle (2.5pt);
	\draw [color=black] (14,6) circle (2.5pt);
	\draw [color=black] (15,6) circle (2.5pt);
	\fill [color=black] (5,3) circle (2.5pt);
	\draw [color=black] (6,3) circle (2.5pt);
	\draw [color=black] (7,3) ++(-2.5pt,0 pt) -- ++(2.5pt,2.5pt)--++(2.5pt,-2.5pt)--++(-2.5pt,-2.5pt)--++(-2.5pt,2.5pt);
	\draw [color=black] (8,3) circle (2.5pt);
	\draw [color=black] (9,3) circle (2.5pt);
	\fill [color=black] (10,3) circle (2.5pt);
	\draw [color=black] (11,3) circle (2.5pt);
	\fill [color=black] (12,3) circle (2.5pt);
	\fill [color=black] (13,3) circle (2.5pt);
	\draw [color=black] (14,3) circle (2.5pt);
	\draw [color=black] (15,3) circle (2.5pt);
	\draw [color=black] (16,3) ++(-2.5pt,0 pt) -- ++(2.5pt,2.5pt)--++(2.5pt,-2.5pt)--++(-2.5pt,-2.5pt)--++(-2.5pt,2.5pt);
	\fill [color=black] (17,3) circle (2.5pt);
	\fill [color=black] (18,3) circle (2.5pt);
	\draw [color=black] (5,8) circle (2.5pt);
	\draw [color=black] (6,8) circle (2.5pt);
	\draw [color=black] (7,8) circle (2.5pt);
	\fill [color=black] (8,8) circle (2.5pt);
	\draw [color=black] (9,8) circle (2.5pt);
	\draw [color=black] (10,8) circle (2.5pt);
	\draw [color=black] (11,8) circle (2.5pt);
	\fill [color=black] (12,8) circle (2.5pt);
	\draw [color=black] (13,8) circle (2.5pt);
	\draw [color=black] (14,8) circle (2.5pt);
	\fill [color=black] (15,8) circle (2.5pt);
	\draw [color=black] (16,8) circle (2.5pt);
	\fill [color=black] (17,8) circle (2.5pt);
	\fill [color=black] (18,8) circle (2.5pt);
	\draw (16,0) node {TASEP $t=0$};
	\draw (17,2) node {ABDF $t=0$};
	\draw (3,7) node {TASEP $t=1$};
	\draw (5,9) node {ABDF $t=1$};
	\end{tikzpicture}
	\caption{TASEP and ABDF evolutions, starting from \autoref{fig:abdffromtasep}.
		Solid arrows denote trajectories of TASEP particles, dotted lines the ones of ABDF as above.
		Two dashed lines couple generations of ABDF particles to TASEP particles not jumping even if they can do so.}
	\label{fig:abdfandtasep} 
\end{figure}

\subsection{Proof of \autoref{thm:conjugacy}, a).}\label{subsect proof bijection}
The proof in itself can be made more concise than what follows,
but we take the chance to introduce some more structure.
We begin with a simple observation, that can be used to invert ``by hand'' the pair map $ \PP$.

\begin{remark}\label{rem inversion}
	Given $\eta\in\set{0,1}^\Z$, let $\theta(x)  =1-\eta(x)-\eta(x+1) $, $x\in\Z$. Then, for every $n\in\N$
	\[
	\theta(x+n)  =1-\eta(x+n)  -\eta\left(x+n+1\right),
	\]
	hence
	\begin{align*}
	\eta\left(  x+n+1\right)   & =1-\eta(x+n)  -\theta\left(
	x+n\right) \\
	& =\eta\left(  x+n-1\right)  +\theta\left(  x+n-1\right)  -\theta\left(
	x+n\right) \\
	& =1-\eta\left(  x+n-2\right)  -\theta\left(  x+n-2\right)  +\theta\left(
	x+n-1\right)  -\theta(x+n),
	\end{align*}
	and so on, which gives us
	\begin{equation}\label{inversion formula}
	\eta\left(  x+n+1\right)  =\frac{1+(-1)  ^{n}}{2}-\left(
	-1\right)  ^{n}\eta(x)  -\sum_{k=0}^{n}(-1)
	^{k}\theta\left(  x+n-k\right)  .
	\end{equation}
	A similar formula holds for negative integer $n$. Hence, we may reconstruct
	$\eta$ from $\theta$ at the price of fixing one value of $\eta$, say $\eta(0)$.
\end{remark}

With this ``reconstruction algorithm'' at hand, we can proceed with the proof.
A crucial property is that a right pair and a left pair are always separated by an odd
number (= 1,3,5,...) of empty pairs, two right pairs or two left pairs by an even number
of empty pairs (= 0,2,4,...), see \autoref{fig:abdffromtasep}.

For $x_1,x_2\in\Z$ we set $[x_1,x_2]_\Z=(x_1,x_1+1,\dots x_2-1,x_2)$

\begin{definition}\label{def max altern interv}
Given a segment $\left[  x_1,x_2\right]_\Z$, we define
\begin{gather*}
	\PP_{\left[  x_1,x_2\right]  _\Z}:\left\{  0,1\right\}
	^{\left[  x_1,x_2+1\right]  _\Z}\rightarrow\left\{
	-1,0,1\right\}  ^{\left[  x_1,x_2\right]  _\Z}\\
	\PP_{\left[  x_1,x_2\right]  _\Z}(\eta)
	(x)  =1-\eta(x)  -\eta(x+1),
	\qquad \eta\in\left\{  0,1\right\}  ^{\left[  x_1,x_2+1\right]_\Z},\, x\in\left[  x_1,x_2\right]_\Z.
\end{gather*}
A segment $\left[  x_1,x_2\right]
_\Z$ of cardinality $n+2$ is called a maximal alternating segment of
$\eta\in\left\{  0,1\right\}^\Z$ if:
\begin{itemize}
	\item $\eta(x_1)=\eta(x_1+1)$ and $\eta(x_2)=\eta(x_2+1)$, \emph{i.e.} $ \PP(\eta)_1(x_1), \PP(\eta)_1(x_2)\in\set{\pm1},$	
	\item $\eta\left(  x_1+k\right)  \neq\eta\left(  x_1+k+1\right)  $, \emph{i.e.}	$ \PP(\eta)_1(  x_1+k) =0$, for
	$k=1,2,...,n$ (not imposed if $n=0$).
\end{itemize}
It is called a maximal alternating segment of $\theta\in\left\{
-1,0,1\right\}  ^\Z$ if
\begin{itemize}
	\item $\theta(x_1)  ,\theta(x_2)  \in\left\{-1,1\right\},$
	\item $\theta\left(  x_1+k\right)  =0$ for $k=1,2,...,n$ (not imposed when $n=0$).
\end{itemize}
Maximal alternating half lines
$(-\infty,x_2]_\Z$ and $[x_1,\infty)_\Z$ are defined analogously.
\end{definition}

Clearly, if $\left[  x_1,x_2\right]  _\Z$ is a maximal
alternating segment of $\eta\in\left\{  0,1\right\}  ^\Z$, then it
is a maximal alternating segment of $\theta:= \PP(\eta)
_1$ (similarly for half lines). The two key facts on this concept are
expressed by the following two lemmata.

\begin{lemma}\label{lem:parity}
Let $\left[  x_1,x_2\right]_\Z$ be a maximal alternating
segment of $\eta\in\left\{  0,1\right\}  ^\Z$ (thus of $\theta:= \PP(\eta)  _1$)
with cardinality $n+2$. Then:
\begin{itemize}
\item if $\theta(x_1)  \theta(x_2)  =1$, $n$ is even;
\item if $\theta(x_1)  \theta(x_2)  =-1$, $n$ is odd.
\end{itemize}
\end{lemma}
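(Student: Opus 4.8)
The plan is to reduce everything to the parity of a single alternating run of $\eta$. By the definition of a maximal alternating segment, $\theta(x_1+k)=\PP(\eta)_1(x_1+k)=0$ for $k=1,\dots,n$, and by the formula $\theta(x)=1-\eta(x)-\eta(x+1)$ this means exactly $\eta(x_1+k)\neq\eta(x_1+k+1)$ for those $k$; hence the bits $\eta(x_1+1),\eta(x_1+2),\dots,\eta(x_2)$ strictly alternate, where $x_2=x_1+n+1$ since the segment has cardinality $n+2$. First I would record the endpoint conditions: $\theta(x_1),\theta(x_2)\in\{-1,1\}$ translate into $\eta(x_1)=\eta(x_1+1)$ and $\eta(x_2)=\eta(x_2+1)$, and on two equal bits one has $\theta=1-2\eta$, so $\theta(x_1)=1-2\eta(x_1)$ and $\theta(x_2)=1-2\eta(x_2)$.

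Next I would propagate the alternation from $x_1+1$ to $x_2=x_1+(n+1)$: there are $n$ sign flips between these two positions, so $\eta(x_2)=\eta(x_1+1)$ when $n$ is even and $\eta(x_2)=1-\eta(x_1+1)$ when $n$ is odd. Using $\eta(x_1+1)=\eta(x_1)$, this reads $\eta(x_2)=\eta(x_1)$ for $n$ even and $\eta(x_2)=1-\eta(x_1)$ for $n$ odd. Substituting into $\theta(x_2)=1-2\eta(x_2)$ gives $\theta(x_2)=\theta(x_1)$ for $n$ even and $\theta(x_2)=-\theta(x_1)$ for $n$ odd, whence $\theta(x_1)\theta(x_2)=\theta(x_1)^2=1$ in the first case and $\theta(x_1)\theta(x_2)=-1$ in the second. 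Since the parity alternatives and the product alternatives each partition the possibilities, this simultaneously establishes both implications of the lemma.

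Alternatively, I could bypass the alternation bookkeeping by invoking the reconstruction identity \eqref{inversion formula} of \autoref{rem inversion} with base point $x_1$: every term $\theta(x_1+j)$, $1\le j\le n$, vanishes, so the alternating sum collapses to the single surviving term $(-1)^n\theta(x_1)$, and after simplifying with $\theta(x_1)=1-2\eta(x_1)$ one lands on $\eta(x_2)=\tfrac{1-(-1)^n}{2}+(-1)^n\eta(x_1)$, from which the two cases follow at once.

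I expect no serious obstacle, as the statement is an elementary parity count. The only points demanding care are the index conventions—the segment has cardinality $n+2$ so that $x_2=x_1+n+1$ and exactly $n$ interior flips occur—and the degenerate case $n=0$, where no alternation is imposed but the endpoint conditions alone force $\eta(x_1)=\eta(x_1+1)=\eta(x_2)$, consistent with $\theta(x_1)\theta(x_2)=1$ and $n$ even. The half-line cases $(-\infty,x_2]_\Z$ and $[x_1,\infty)_\Z$ are covered by the same computation, since only one endpoint together with the interior flips enters the argument.
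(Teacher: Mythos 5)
Your proposal is correct, and your primary argument takes a genuinely different --- and cleaner --- route than the paper. The paper proves the lemma by invoking the reconstruction identity \eqref{inversion formula}: with all interior $\theta$-values vanishing, the alternating sum collapses to the single term $(-1)^n\theta(x_1)$, and the parity of $n$ is read off by comparing against the prescribed endpoint value $\eta(x_2)$; however, the paper carries this out only in the sign case $\theta(x_1)=\theta(x_2)=1$ (so $\eta(x_1)=\eta(x_2)=0$) and dismisses the remaining cases as ``analogous''. Your first argument avoids the inversion formula altogether: encoding the endpoint conditions as $\theta(x_i)=1-2\eta(x_i)$ and counting the $n$ interior flips between $\eta(x_1+1)$ and $\eta(x_2)=\eta(x_1+n+1)$ yields the single identity $\theta(x_1)\theta(x_2)=(-1)^n$, which proves both bullet points simultaneously and with no case analysis --- and you correctly note that the converse implications follow because the parity alternatives and the product alternatives each form a dichotomy. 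Your alternative route is essentially the paper's computation, but again made uniform over all sign cases by keeping $\eta(x_1)$ symbolic rather than specializing; the algebra leading to $\eta(x_2)=\frac{1-(-1)^n}{2}+(-1)^n\eta(x_1)$ checks out. What the paper's route buys is reuse of the inversion formula, which is the workhorse of the subsequent existence lemma and of the bijection proof of \autoref{thm:conjugacy}, a), so the case computation doubles as a warm-up for those; what yours buys is a shorter, case-free, self-contained proof whose output $\theta(x_1)\theta(x_2)=(-1)^n$ is a crisper formulation than the two separate implications. One cosmetic remark: your closing comment about the half lines $(-\infty,x_2]_\Z$ and $[x_1,\infty)_\Z$ is vacuous here, since the lemma's conclusion compares the signs at \emph{two} endpoints and a maximal alternating half line has only one; nothing is claimed or needed in that case.
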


\begin{proof}
If $\theta(x_1)  \theta(x_2)  =1$, then $\theta(x_1)=\theta(x_2)=\pm 1$. 
In the $+1$ case, $\eta(x_1)  =\eta\left(  x_1+1\right)  =0$
, $\eta(x_2)  =\eta\left(  x_2+1\right)  =0$. Then, from
identity \eqref{inversion formula},%
\begin{align*}
\eta\left(  x_1+n+1\right)   & =\frac{1+(-1)  ^{n}}{2}-\left(
-1\right)  ^{n}\eta(x_1)  -\sum_{k=0}^{n}(-1)
^{k}\theta\left(  x_1+n-k\right) \\
& =\frac{1+(-1)  ^{n}}{2}-(-1)  ^{n}\theta\left(
x_1\right)=\frac{1+(-1)  ^{n}}{2}-(-1)  ^{n},
\end{align*}
where we have used $\theta\left(  x_1+n-k\right)  =0$ for $k=0,1,...,n-1$,
in the second-last step.
Since $\eta\left(  x_1+n+1\right)  =0$, this implies $n$ even. Other cases are analogous.
\end{proof}
\noindent
\autoref{lem:parity} imposes a restriction to the sequences of $\left\{-1,0,1\right\}^\Z$ 
in the range of the first component of $ \PP$.

\begin{lemma}
Let $\theta\in\Lambda\backslash\left\{  \overline{alt}_{0},\overline{alt}%
_1\right\}  $ and let $\left[  x_1,x_2\right]  _\Z$ be a
maximal alternating segment of $\theta$ of cardinality $n+2$. Then there
exists a unique $\eta\in\left\{  0,1\right\}  ^{\left[  x_1,x_2+1\right]  _\Z}$ such that
\[
 \PP_{\left[  x_1,x_2\right]  _\Z}(\eta)
=\theta|_{\left[  x_1,x_2\right]  _\Z}.
\]
The string $\eta$ satisfies $\eta(x_1)  =\eta\left(
x_1+1\right)  $, $\eta(x_2)  =\eta\left(  x_2+1\right)  $,
with the unique values determined by the values $\theta(x_1)
,\theta(x_2)  $ and in the middle it is given by
\eqref{inversion formula}, precisely
\[
\eta\left(  x_1+j+1\right)  =\frac{1+(-1)  ^{j}}{2}-\left(
-1\right)  ^{j}\eta(x_1)  -\sum_{k=0}^{j}(-1)
^{k}\theta\left(  x_1+j-k\right)
\]
for $j=1,...,n-1$.
\end{lemma}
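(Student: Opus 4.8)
The plan is to exploit that the defining relation $\theta(x) = 1 - \eta(x) - \eta(x+1)$ is a first-order recursion in $\eta$: rewritten as $\eta(x+1) = 1 - \eta(x) - \theta(x)$, it shows that the value of $\eta$ at any single point of the segment, together with $\theta|_{[x_1,x_2]_\Z}$, determines $\eta$ on all of $[x_1,x_2+1]_\Z$. The closed form of this recursion started at $x_1$ is precisely the inversion formula \eqref{inversion formula} of \autoref{rem inversion}. So the content of the lemma reduces to (a) pinning down the single free value $\eta(x_1)$, and (b) checking that the sequence thereby produced is genuinely $\{0,1\}$-valued and reproduces $\theta$ at both endpoints.

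First I would establish uniqueness together with the forced boundary value. Evaluating the pair relation at $x=x_1$ gives $\theta(x_1) = 1 - \eta(x_1) - \eta(x_1+1)$; since $[x_1,x_2]_\Z$ is maximal alternating we have $\theta(x_1)\in\{-1,1\}$, which forces $\eta(x_1)=\eta(x_1+1)$ and hence the unique value $\eta(x_1)=(1-\theta(x_1))/2$. Any admissible $\eta$ must take this value at $x_1$, and then the recursion determines all remaining values; this already yields uniqueness. For existence I would define $\eta$ by exactly this recursion (equivalently, in the interior, by the displayed inversion formula for $j=1,\dots,n-1$), so that $\PP_{[x_1,x_2]_\Z}(\eta)=\theta|_{[x_1,x_2]_\Z}$ holds by construction.

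It then remains to verify that this $\eta$ is admissible, i.e. $\{0,1\}$-valued. In the interior $\theta(x_1+k)=0$ for $k=1,\dots,n$, so the recursion there reduces to $\eta(x+1)=1-\eta(x)$: the values simply alternate between $0$ and $1$ and hence automatically stay in $\{0,1\}$. Starting from $\eta(x_1+1)=\eta(x_1)$ and applying this flip $n$ times, one reaches $\eta(x_2)=\eta(x_1)$ if $n$ is even and $\eta(x_2)=1-\eta(x_1)$ if $n$ is odd.

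The main obstacle is the consistency at the right endpoint: one must check that the propagated value $\eta(x_2)$ is compatible with $\theta(x_2)$, i.e. that $\eta(x_2)=(1-\theta(x_2))/2$, for only then does $\eta(x_2+1)=1-\eta(x_2)-\theta(x_2)$ land in $\{0,1\}$ (and in fact equal $\eta(x_2)$). This is exactly where \autoref{lem:parity} is indispensable, since it ties the parity of $n$ to the sign product $\theta(x_1)\theta(x_2)$. When $\theta(x_1)\theta(x_2)=1$ the lemma gives $n$ even, so $\eta(x_2)=\eta(x_1)=(1-\theta(x_1))/2=(1-\theta(x_2))/2$; when $\theta(x_1)\theta(x_2)=-1$ it gives $n$ odd, so $\eta(x_2)=1-\eta(x_1)=(1+\theta(x_1))/2=(1-\theta(x_2))/2$. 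In both cases the endpoint condition holds and $\eta$ is admissible, which completes the existence half. The half-line cases $(-\infty,x_2]_\Z$ and $[x_1,\infty)_\Z$ follow by running the same recursion in the one available direction.
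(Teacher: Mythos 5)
Your proposal follows the paper's own proof in all essentials: uniqueness by noting that $\theta(x_1)\in\{\pm1\}$ forces $\eta(x_1)=\eta(x_1+1)=(1-\theta(x_1))/2$, after which the recursion $\eta(x+1)=1-\eta(x)-\theta(x)$ (equivalently \eqref{inversion formula}) determines all remaining values; and existence by defining $\eta$ through that recursion and verifying that the propagated value at the right endpoint is consistent with $\theta(x_2)$, split according to the sign of $\theta(x_1)\theta(x_2)$. This endpoint check is exactly the case-by-case verification the paper performs (it reports only the case $\theta(x_1)=\theta(x_2)=1$, where you handle both sign cases), and your remark that the interior values simply alternate, hence automatically stay in $\{0,1\}$, makes explicit a point the paper leaves implicit.

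The one genuine flaw is your appeal to \autoref{lem:parity} for the parity of $n$. As stated, that lemma assumes $\theta=\PP(\eta)_1$ for some $\eta\in\{0,1\}^{\Z}$ --- it is a statement about the \emph{range} of $\PP$, and the paper uses it precisely to show that this range satisfies the parity constraints. Invoking it while proving that such an $\eta$ exists is circular. The fact you need comes instead from the hypothesis $\theta\in\Lambda$: conditions (i)--(ii) of \autoref{def ABDF config preliminary} say directly that the $n$ zeros strictly between $x_1$ and $x_2$ are even in number when $\theta(x_1)\theta(x_2)=1$ and odd when $\theta(x_1)\theta(x_2)=-1$. This is how the paper argues (``$n$ is even since $\theta\in\Lambda$''). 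With that single citation replaced by the definition of $\Lambda$, your proof is correct and coincides with the paper's.
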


\begin{proof}
If $\eta\in\left\{  0,1\right\}  ^{\left[  x_1,x_2+1\right]  _{\Z%
}} $ satisfies $ \PP_{\left[  x_1,x_2\right]  _\Z}\left(
\eta\right)  =\theta|_{\left[  x_1,x_2\right]  _\Z}$, then the
properties of the values of $\eta$, including the formula for the intermediate
values, are obvious or have been proved above. Thus uniqueness is clear.
Proving the existence means proving that $\eta\left(  x_1+n+1\right)  $
given by the formula coincides with the values of $\eta(x_2)
=\eta\left(  x_2+1\right)  $ prescribed by $\theta(x_2)  $.
This must be checked case by case, and we only report $\theta(x_1)  =\theta(x_2)=1$:
$n$ is even since $\theta\in\Lambda$, and
$\eta(x_1)  =\eta\left(  x_1+1\right)  $, $\eta\left(
x_2\right)  =\eta\left(  x_2+1\right)  $, all equal to zero. We have from \eqref{inversion formula}
\begin{align*}
\eta\left(  x_1+n+1\right)   & =\frac{1+(-1)  ^{n}}{2}-\left(
-1\right)  ^{n}\eta(x_1)  -\sum_{k=0}^{n}(-1)
^{k}\theta\left(  x_1+n-k\right) \\
& =\frac{1+(-1)  ^{n}}{2}-(-1)  ^{n}=0=\eta\left(
x_2\right).\qedhere
\end{align*}
\end{proof}

\begin{proof}[Proof of \autoref{thm:conjugacy}, a)]
It is sufficient to prove that, given $\theta\in\Lambda\backslash\left\{
\overline{alt}_{0},\overline{alt}_1\right\}  $, there exists one and only
one $\eta\in\left\{  0,1\right\}  ^\Z\backslash\left\{
alt_0,alt_1\right\}  $ such that $ \PP(\eta)  =\theta$.

Let $\left\{  x_{n}\right\}  $ be the strictly increasing, possibly bi-infinite
sequence of points of $\Z$ such that $[x_{n},x_{n+1}]_\Z$ is an even or odd maximal segment of $\theta$. 
There are four cases: $\left\{  x_{n}\right\}  $ is bi-infinite, or infinite only to the
left, or infinite only to the right, or finite. The construction of $\left\{
x_{n}\right\}  $ may proceed from the origin of $\Z$: we denote by
$x_0$ the first point $\geq0$ with $\theta\left(  x_0\right)  \neq0$; by
$x_1$ the first point $>x_0$ such that $\theta(x_1)  \neq0
$; and so on, obviously if they exist. And we denote by $x_{-1}$ the first
point $<0$ such that $\theta\left(  x_{-1}\right)  \neq0$ and so on.

For every $n$ such that $x_{n},x_{n+1}$ exists, we construct the corresponding
values of $\eta\left(  x_{n}\right)  $, ..., $\eta\left(  x_{n+1}+1\right)  $
as in (d) of the previous lemma. The construction is unique with the property
that, locally, $ \PP(\eta)  _1=\theta$ on $\left[
x_{n},x_{n+1}\right]  _\Z$. However, in principle the definition for
$\left[  x_{n},x_{n+1}\right]  _\Z$ may be in contradiction with the
definition for $\left[  x_{n+1},x_{n+2}\right]  _\Z$ because the
points $x_{n+1},x_{n+1}+1$ are in common. But, based on $\left[  x_{n}%
,x_{n+1}\right]  _\Z$, we have defined $\eta\left(  x_{n+1}\right)
=\eta\left(  x_{n+1}+1\right)  $, equal to 1 if $\theta\left(  x_{n+1}\right)
=-1$, equal to 0 if $\theta\left(  x_{n+1}\right)  =1$. And based on $\left[
x_{n+1},x_{n+2}\right]  _\Z$ we have given the same definition.
Therefore there is no contradiction. The treatment of half lines is analogous.
\end{proof}

\subsection{Proof of \autoref{thm:conjugacy}\label{subsect proof conjug}}

We already stressed the drawback of $ar\left(  \theta\right)  $ being non local, but when $\theta= \PP(\eta)  $, 
the expression of $ar\left(  \theta\right)  (x)  $ becomes local when written
in terms of $\eta$ (which depends non-locally on $\theta$). This is a key fact
for the proof of \autoref{thm:conjugacy}.

\begin{lemma}
\label{lemma key ident}If $(\theta,act)  = \PP\left(
\eta\right)  $, then%
\[
act(x)  =\eta(x)  \left(  1-\eta(x+1)
\right)  .
\]
In other words, $act(x)  $ is equal to one if and only if
$\eta(x)  =1$, $\eta(x+1)  =0$, namely there is a
particle at $x$ and the position $x+1$ is free, so the particle can jump.
\end{lemma}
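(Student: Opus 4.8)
The plan is to evaluate $ar(\theta)(x)$ pointwise, where $\theta=\PP(\eta)_1$ satisfies $\theta(y)=1-\eta(y)-\eta(y+1)$, and match it against the claimed local expression. Since $\eta$ takes values in $\set{0,1}$, the quantity $\eta(x)\pa{1-\eta(x+1)}$ is itself $\set{0,1}$-valued and equals $1$ exactly when $\eta(x)=1$ and $\eta(x+1)=0$, so the statement amounts to showing that $ar(\theta)(x)=1$ precisely in that configuration. First I would dispose of the case $\theta(x)\in\set{-1,1}$: here $\eta(x)=\eta(x+1)$, whence $\eta(x)\pa{1-\eta(x+1)}=\eta(x)\pa{1-\eta(x)}=0$, in agreement with $ar(\theta)(x)=0$ supplied by item (iii) of \autoref{def ABDF config preliminary}.

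The substantive case is $\theta(x)=0$, that is $\eta(x)\neq\eta(x+1)$, where $1-\eta(x+1)=\eta(x)$ and hence $\eta(x)\pa{1-\eta(x+1)}=\eta(x)$; it remains to verify $ar(\theta)(x)=\eta(x)$. Assuming $L(x)\neq\emptyset$ (the symmetric situation uses $R(x)$ and item (v), the two agreeing by \autoref{lemma coherence}), I set $x_1=\max L(x)$ and $k=x-x_1$, so that $\theta$ vanishes at $x_1+1,\dots,x-1$ while $\theta(x_1)\in\set{-1,1}$. Feeding this into the inversion formula \eqref{inversion formula} based at $x_1$, all but one term of the sum drop out because $\theta\equiv0$ strictly between $x_1$ and $x$; using moreover $\eta(x_1)=\eta(x_1+1)=\tfrac{1-\theta(x_1)}{2}$ (which holds as $\theta(x_1)=1-2\eta(x_1)$), the formula collapses to
\[
\eta(x)=\frac{1+(-1)^{k}\theta(x_1)}{2}.
\]

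Finally I would compare this with item (iv), which gives $ar(\theta)(x)=\abs{\tfrac{\theta(x_1)+(-1)^{k}}{2}}$. Since $\theta(x_1)$ and $(-1)^{k}$ both lie in $\set{-1,1}$, the elementary identity $\abs{\tfrac{a+b}{2}}=\tfrac{1+ab}{2}$ for $a,b\in\set{-1,1}$ rewrites the right-hand side as $\tfrac{1+(-1)^{k}\theta(x_1)}{2}$, which is exactly $\eta(x)$, closing this case. The all-empty configurations $\eta=alt_\alpha$ are immediate, since there $act=\PP(\eta)_2=\eta$ alternates and $\eta(x)\pa{1-\eta(x+1)}=\eta(x)$; the half-line cases are handled identically. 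The only genuine work, and the step where care is needed, is the sign-and-parity bookkeeping in collapsing \eqref{inversion formula} to the closed form for $\eta(x)$; once that closed form is in hand, matching it with the definition of $ar$ is purely algebraic.
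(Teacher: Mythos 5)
Your proof is correct, and it follows the same overall skeleton as the paper's — disposing of $\theta(x)\in\set{-1,1}$ via item (iii), treating the all-empty configurations $alt_\alpha$ separately, and reducing the substantive case $\theta(x)=0$ to comparing $ar(\theta)(x)$ with $\eta(x)$ using $x_1=\max L(x)$ — but it resolves the heart of the matter differently. The paper handles the case $\theta(x)=0$ by explicit enumeration: it splits according to the value of $\theta(x_1)$ and the parity of $k$, tracks the alternation $\eta(x_1+2)=1,\ \eta(x_1+3)=0,\dots$ by hand in each sub-case to read off $\eta(x_0)$ and $\eta(x_0+1)$, checks agreement with item (iv) case by case, and leaves two sub-cases to the reader. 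You instead feed the maximal-alternating structure into the inversion formula \eqref{inversion formula} of \autoref{rem inversion} — machinery the paper establishes but deploys only in the bijection proof of \autoref{thm:conjugacy}, a) — to collapse everything into the single closed form $\eta(x)=\tfrac{1+(-1)^{k}\theta(x_1)}{2}$, and then dispatch all sub-cases at once via the identity $\abs{\tfrac{a+b}{2}}=\tfrac{1+ab}{2}$ for $a,b\in\set{-1,1}$ applied to item (iv). (I verified the collapse: with $n=k-1$ in \eqref{inversion formula}, only the $j=k-1$ term of the sum survives, and substituting $\eta(x_1)=\tfrac{1-\theta(x_1)}{2}$ indeed yields your formula.) What your route buys is uniformity — no parity case analysis and no sub-cases left unchecked — at the price of leaning on the inversion formula; the paper's route is more elementary and self-contained within the proof itself, though less complete as written. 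Both treat the empty-$L(x)$ and half-line situations at the same level of brevity, yours by appeal to \autoref{lemma coherence} and symmetry, the paper's by declaring the arguments similar.
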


\begin{proof}
Let us treat separately the case when $\eta=alt_{\alpha}$, $\alpha=0,1$. In
this case $act=$ $\eta$; and also $\eta(x)  \left(  1-\eta\left(
x+1\right)  \right)  =\eta(x)  $, because if $\eta\left(
x+1\right)  =0$ it is true, while if $\eta(x+1)  =1$ we
necessarily have $\eta(x)  =0$ by alternation, which coincides
with $\eta(x)  \left(  1-\eta(x+1)  \right)  $. The
formula of the lemma is proved in this particular case.

Assume now $\eta$ different from $alt_{\alpha}$, $\alpha=0,1$, so that
$act=ar\left(  \theta\right)  $. Recall the definition of $ar\left(
\theta\right)  (x)  $ in \autoref{def ABDF config}, points
(iii)-(vi). Let $x_0$ be such that $\theta\left(  x_0\right)  \neq0$. It
means that $\eta\left(  x_0\right)  =\eta\left(  x_0+1\right)  $, both
equal to 0 or 1. In both cases $\eta\left(  x_0\right)  \left(
1-\eta\left(  x_0+1\right)  \right)  =0$, hence equal to $ar\left(
\theta\right)  \left(  x_0\right)  $ as defined in Definition
\ref{def ABDF config} point (iii).

Assume now $\theta\left(  x_0\right)  =0$, from which $\eta\left(x_0\right)\neq\eta\left(  x_0+1\right)$ 
and thus the pair $\left(\eta\left(  x_0\right),\eta\left(  x_0+1\right)  \right)  $ is either
$\left(  1,0\right)  $ or $\left(  0,1\right)  $. Assume that the set
$L\left(  x_0\right)  $ is non empty and let $x_1$ be its maximum and let
$k>0$ be such that $x_1+k=x_0$. The proof can be divided into several
cases depending on the value of $\theta(x_1)  $ and the parity
of $k$. For instance, assume $\theta(x_1)  =1$, $k$ odd. Thus
$\eta(x_1)  =\eta\left(  x_1+1\right)  =0$, $\eta\left(
x_1+2\right)  =1$, $\eta\left(  x_1+3\right)  =0$, and so on, hence
$\eta\left(  x_0\right)  =\eta\left(  x_1+k\right)  =0$, and $\eta\left(
x_0+1\right)  =1$. In this case
\[
\eta\left(  x_0\right)  \left(  1-\eta\left(  x_0+1\right)  \right)  =0
\]
and (from \autoref{def ABDF config} point (iv))%
\[
ar\left(  \theta\right)  \left(  x_0\right)  =\left\vert \frac{\theta\left(
x_1\right)  +(-1)  ^{k}}{2}\right\vert =0
\]
so they coincide. If $\theta(x_1)  =1$, $k$ even,
\begin{equation*}
	\eta\left(  x_0\right)  \left(  1-\eta\left(  x_0+1\right)  \right)=1, \quad
	ar\left(  \theta\right)  \left(  x_0\right) =\left\vert \frac
	{\theta(x_1)  +(-1)  ^{k}}{2}\right\vert =1,
\end{equation*}
so they coincide. The reader can check the two cases with $\theta\left(
x_1\right)  =0$. If $L\left(  x_0\right)  $ is empty and $R\left(
x_0\right)  $ is non empty, the arguments are similar.
\end{proof}

\begin{proof}
[Proof of \autoref{thm:conjugacy}]\textbf{Step 1}. We prove the identity between the first components:
\begin{equation}
\T_{\text{ABDF}}\left(  t,\omega, \PP(\eta)
\right)  _1= \PP\left(  \T_{\text{TASEP}}\left(
t,\omega,\eta\right)  \right)  _1.\label{identity first component stoch}%
\end{equation}
Let $\eta\in\left\{  0,1\right\}  ^\Z$, $t\in\N$, $\omega
\in\Omega$, be given and write $\theta:= \PP(\eta)  _1$,
$\widehat{\eta}:=\T_{\text{TASEP}}\left(  t,\omega,\eta\right)  $,
$\widehat{\theta}:= \PP\left(  \widehat{\eta}\right)  _1$,
$\widetilde{\theta}:=\T_{\text{ABDF}}\left(  t,\omega,\theta\right)
_1$. We have to prove $\widehat{\theta}=\widetilde{\theta}$.

From \autoref{def pair operator} and \autoref{def Tasep},
\begin{align*}
\widehat{\theta}(x)   
& =1-\widehat{\eta}(x)-\widehat{\eta}(x+1) 
=1-\left[  \varkappa\left(t,x\right)  \eta(x)  +\omega\left(
t,x\right)  \eta(x+1)  \right]  \eta(x) \\
&\qquad -\left[  \varkappa(t,x-1)  \eta(x)  +\omega\left(
t,x-1\right)  \eta(x-1)  \right]  \left(  1-\eta(x)
\right) \\
&\qquad -\left[  \varkappa(t,x+1)  \eta(x+1)
+\omega(t,x+1)  \eta(t,x+2)  \right]  \eta\left(
x+1\right) \\
&\qquad -\left[  \varkappa\left(  t,x\right)  \eta(x+1)  +\omega\left(
t,x\right)  \eta(x)  \right]  \left(  1-\eta(x+1)
\right)  .
\end{align*}
It simplifies, for instance, to
\begin{align*}
\widehat{\theta}(x)   & =\left[  1-\omega(t,x-1)
\eta(x-1)  \right]  (1-\eta(x)) \\
& -\left[  \varkappa(t,x+1)  +\omega(t,x+1)
\eta(t,x+2)  \right]  \eta(x+1),
\end{align*}
because $\eta(x)  \eta(x)  =\eta(x)  $,
$\eta(x)  (1-\eta(x))  =0$,
$\varkappa\left(  t,x\right)  +\omega\left(  t,x\right)  =1$.

Concerning $\widetilde{\theta}$, from Definitions \ref{def ABDF dynamics} and
\ref{def pair operator},%
\begin{align*}
\widetilde{\theta}(x)   & =\max\left(  \theta(x-1)
,0\right)  +act(x-1)  \varkappa(t,x-1) \\
& +\min\left(  \theta(x+1)  ,0\right)  -act(x+1)
\varkappa(t,x+1)  ,
\end{align*}
where, by definition of $ \PP(\eta)  _1$ and from Lemma
\ref{lemma key ident},
\begin{equation*}
\theta(x)   =1-\eta(x)  -\eta(x+1), \quad 
act(x)  =\eta(x)  \left(  1-\eta\left(x+1\right)  \right)  .
\end{equation*}
We have,
\begin{gather*}
	\max\left(  1-\eta(x-1)  -\eta(x)  ,0\right)
	=\left(  1-\eta(x-1)  \right)  \left(  1-\eta(x)
	\right),\\
	\min\left(  1-\eta(x+1)  -\eta(t,x+2)  ,0\right)
	=-\eta(x+1)  \eta(t,x+2)  .
\end{gather*}
Moreover,
\begin{equation*}
act(x-1)   =\eta(x-1)  \left(  1-\eta\left(
x\right)  \right), \quad
act(x+1)   =\eta(x+1)  \left(  1-\eta\left(
x+2\right)  \right)  .
\end{equation*}
Hence
\begin{align*}
\widetilde{\theta}(x)   & =\left(  1-\eta(x-1)\right)  (1-\eta(x))  +\eta(x-1)
(1-\eta(x))  \varkappa(t,x-1) \\
&\quad  -\eta(x+1)  \eta(t,x+2)  -\eta(x+1)
\left(  1-\eta(t,x+2)  \right)  \varkappa(t,x+1)\\
& =1-\eta(x)  -\eta\left(x-1\right)  (1-\eta(x))  \omega\left(t,x-1\right) \\
&\quad -\eta(x+1)  \varkappa(t,x+1)
-\eta(x+1)  \eta(t,x+2)  \omega(t,x+1)
\end{align*}
which is equal to $\widehat{\theta}(x)  $.

\textbf{Step 2}. We now prove the identity between second components:
\begin{equation}
\T_{\text{ABDF}}\left(  t,\omega, \PP(\eta)
\right)  _2= \PP\left(  \T_{\text{TASEP}}\left(
t,\omega,\eta\right)  \right)  _2.\label{identity second component stoch}%
\end{equation}
This identity is obviously true when the two elements of
\eqref{identity first component stoch} are not zero, because both the terms of
\eqref{identity second component stoch} are defined as the activation map of
the corresponding terms of \eqref{identity first component stoch}. 
Thus it remains to prove identity \eqref{identity second component stoch} when
\begin{equation*}
\T_{\text{ABDF}}\left(  t,\omega, \PP(\eta)
\right)  _1  =0, \quad  \PP\left(  \T_{\text{TASEP}}\left(  t,\omega,\eta\right)
\right)  _1  =0.
\end{equation*}
Condition $ \PP\left(  \T_{\text{TASEP}}\left(t,\omega,\eta\right)  \right)_1=0$ implies
$\T_{\text{TASEP}}\left(  t,\omega,\eta\right)  =alt_{\alpha}$ for some
$\alpha=0,1$ and $ \PP\left(  \T_{\text{TASEP}}\left(t,\omega,\eta\right)  \right)  _2=alt_{\alpha}$. 
Therefore we have to prove
\[
\T_{\text{ABDF}}\left(  t,\omega, \PP(\eta)\right)_2=alt_{\alpha},
\]
and we split the proof in two more steps.

\textbf{Step 3}. We continue the proof of Step 2 assuming $\alpha=1$. We have
to prove that one of the following three conditions hold (see the three
conditions at the end of \autoref{def ABDF dynamics}):
\begin{gather}\nonumber
	\PP(\eta)  _1(0)  = -1;\\ \label{three conditions}
	\PP(\eta)  _2(-1)  = 1 \quad \text{and} \quad
	\omega\left(  t-1,-1\right)  = 1;\\ \nonumber
	\PP(\eta)  _2(0)  = 1 \quad \text{and}\quad 
	\omega\left(  t-1,0\right)  = 0.
\end{gather}
If the first one is true, the proof is complete. Otherwise we have
$ \PP(\eta)  _1(0)  =1$ or $ \PP(\eta)  _1(0)  =0$. Let us prove that
$ \PP(\eta)  _1(0)  =1$ implies the second condition; 
and that $ \PP(\eta)_1(0)  =0$ implies either the second or third conditions.

Thus assume $ \PP(\eta)  _1(0)  =1$. In
this case $\eta(0)  =\eta\left(  1\right)  =0$, hence we need
$\eta(-1)  =1$ and $\omega\left(  t-1,-1\right)  =1$ to get
$\T_{\text{TASEP}}\left(  t,\omega,\eta\right)  =alt_1$. But then,
from $\eta(-1)  =1$, $\eta(0)  =0$ and
$\omega\left(  t-1,-1\right)  =1$ we deduce $ \PP(\eta)
_1(-1)  =0$ and $ \PP(\eta)  _2\left(
-1\right)  =1$ (\autoref{lemma key ident}), so the second condition hold true.

If $ \PP(\eta)  _1(0)  =0$, then
$\eta(0)  \neq\eta\left(  1\right)  $. It cannot be $\eta\left(
0\right)  =1$, $\eta\left(  1\right)  =0$, $\omega\left(  t-1,0\right)  =1$,
otherwise $\T_{\text{TASEP}}\left(  t,\omega,\eta\right)  \left(
1\right)  =1$, incompatible with $\T_{\text{TASEP}}\left(
t,\omega,\eta\right)  =alt_1$. Thus: i) either $\eta(0)  =0$,
$\eta\left(  1\right)  =1$; ii) or $\eta(0)  =1$, $\eta\left(
1\right)  =0$, $\omega\left(  t-1,0\right)  =0$. In case (i), we must have
$\eta(-1)  =1$ and $\omega\left(  t-1,-1\right)  =1$ to get
$\T_{\text{TASEP}}\left(  t,\omega,\eta\right)  =alt_1$; in this
case the conclusion is, as above, that the second condition holds true. In
case (ii) we have $ \PP(\eta)  _1(0)  =0$,
$ \PP(\eta)  _2(0)  =1$ (\autoref{lemma key ident}) and of course $\omega\left(  t-1,0\right)  =0$, hence
the last of the three conditions hold. The case $\alpha=1$ is solved.

\textbf{Step 4}. We continue the proof of Step 2 assuming $\alpha=0$,
namely $\T_{\text{TASEP}}\left(  t,\omega,\eta\right)  =alt_0$. We
have to prove that none of the conditions \eqref{three conditions} hold. 
We argue by contradiction, observing that
\begin{align*}
	\PP(\eta)_1(0)=-1 \, 
	&\Rightarrow\, \eta(0)  =\eta\left(  1\right)=1 \,
	\Rightarrow\, \T_{\text{TASEP}}\left(  t,\omega,\eta\right)(0)  =1,\\
	\begin{cases}
	\PP(\eta)_2(-1)  =1\\
	\omega\left(  t-1,-1\right)  =1
	\end{cases} \,
	&\Rightarrow\, \eta(-1)  =1, \eta(0)  =0 \,
	\Rightarrow\, \T_{\text{TASEP}}\left(  t,\omega,\eta\right)  (0)  =1,\\
	\begin{cases}
	\PP(\eta)  _2(0)=1\\
	\omega\left(  t-1,0\right)  =0
	\end{cases}
	\, 
	&\Rightarrow\,  \eta(0)  =1, \eta\left(  1\right)  =0\,
	\Rightarrow\,
	\begin{cases}
	\T_{\text{TASEP}}\left(	t,\omega,\eta\right)  (0)  =1\\
	\T_{\text{TASEP}}\left(  t,\omega,\eta\right)  \left(  1\right)  =0
	\end{cases},
\end{align*}
where conditions on the right are incompatible with $\T_{\text{TASEP}}\left(t,\omega,\eta\right)=alt_0$.
This completes the proof.
\end{proof}

We complete this section with a simple corollary of \autoref{thm:conjugacy} 
which is not easy to prove directly on ABDF dynamics. In
plain words it says that the point $x_0$ where a pair coalesces, cannot be the
origin of a pair at the same time of the coalescence.

\begin{corollary}
\label{corollary coalescence arising}Let $x_0\in\Z$, $\left(
\theta,act\right)  \in\Lambda$ be such that
\[
\theta\left(  x_0-1\right)  =1,\theta\left(  x_0\right)  =0,\theta\left(
x_0+1\right)  =-1.
\]
Then, for every $\left(  t,\omega\right)  \in\N\times\Omega$
\begin{equation*}
\T_{\text{ABDF}}\left(  t,\omega,(\theta,act)  \right)
_1\left(  x_0\right) =0, \quad 
\T_{\text{ABDF}}\left(  t,\omega,(\theta,act)  \right)
_2\left(  x_0\right)   =0.
\end{equation*}
The same result holds if one or both $x_0-1,x_0+1$ are arising pair point
for $\theta$.
\end{corollary}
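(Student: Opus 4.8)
The plan is to sidestep the nonlocal, case-laden ABDF update entirely and pull everything back to TASEP through \autoref{thm:conjugacy}. By part (a) there is a unique $\eta\in\set{0,1}^\Z$ with $\PP(\eta)=(\theta,act)$, and by part (b)
\[
\T_{\text{ABDF}}(t,\omega,(\theta,act))=\PP(\widehat\eta),\qquad \widehat\eta:=\T_{\text{TASEP}}(t,\omega,\eta).
\]
Since $\PP(\widehat\eta)_1(x_0)=1-\widehat\eta(x_0)-\widehat\eta(x_0+1)$ by \autoref{def pair operator}, and $\PP(\widehat\eta)_2(x_0)=\widehat\eta(x_0)(1-\widehat\eta(x_0+1))$ by \autoref{lemma key ident} (whose proof covers every configuration, alternating or not), the whole corollary collapses to the single assertion
\[
\widehat\eta(x_0)=0,\qquad \widehat\eta(x_0+1)=1,
\]
because these two values give $\PP(\widehat\eta)_1(x_0)=0$ and $\PP(\widehat\eta)_2(x_0)=0$ simultaneously. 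So the first step is to translate the hypotheses into $\eta$: from $\theta(x)=1-\eta(x)-\eta(x+1)$, the conditions $\theta(x_0-1)=1$, $\theta(x_0)=0$, $\theta(x_0+1)=-1$ force $\eta(x_0-1)=\eta(x_0)=0$ and $\eta(x_0+1)=\eta(x_0+2)=1$.

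Next I would read off $\widehat\eta$ directly from \autoref{def Tasep}. At $x_0$ we have $\eta(x_0)=0$, so only the second bracket survives and it equals $\omega(t-1,x_0-1)\eta(x_0-1)=0$ since $\eta(x_0-1)=0$, giving $\widehat\eta(x_0)=0$. At $x_0+1$ we have $\eta(x_0+1)=1$, so only the first bracket survives and it equals $\varkappa(t-1,x_0+1)+\omega(t-1,x_0+1)\eta(x_0+2)=\varkappa(t-1,x_0+1)+\omega(t-1,x_0+1)=1$ since $\eta(x_0+2)=1$, giving $\widehat\eta(x_0+1)=1$. Both values are manifestly independent of $t$ and $\omega$ (in TASEP terms: the particle at $x_0+1$ is blocked, and nothing can enter the empty $x_0$ from the left), which is precisely why the conclusion holds for every $(t,\omega)\in\N\times\Omega$; the degenerate case $t=0$ is included as well, since there $\widehat\eta=\eta$ and the same two values appear.

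Finally, the extension to ``arising pair points'' should follow from the same computation with the local $\eta$-profile adjusted. By \autoref{lemma key ident}, $x_0-1$ being active means $act(x_0-1)=\eta(x_0-1)(1-\eta(x_0))=1$, i.e.\ $\eta(x_0-1)=1$, $\eta(x_0)=0$; a right particle actually arises at $x_0$ exactly when the site fires, which in the $\T_{\text{ABDF}}$ formula is the event $\varkappa(t-1,x_0-1)=1$, i.e.\ $\omega(t-1,x_0-1)=0$ --- in TASEP language, the particle at $x_0-1$ declines its available jump. But then the surviving bracket at $x_0$ gives $\widehat\eta(x_0)=\omega(t-1,x_0-1)\eta(x_0-1)=0$ once more, and symmetrically $x_0+1$ being an active site that fires forces $\eta(x_0+1)=1$, $\eta(x_0+2)=0$, $\omega(t-1,x_0+1)=0$, whence $\widehat\eta(x_0+1)=\varkappa(t-1,x_0+1)=1$. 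So in every combination of ``genuine particle'' and ``freshly generated particle'' on the two sides one recovers $\widehat\eta(x_0)=0$, $\widehat\eta(x_0+1)=1$, and the conclusion carries over verbatim. I expect the only genuine subtlety to be this last matching: recognizing that ``a pair is generated at an active site in ABDF'' is precisely the event ``a TASEP particle refuses an available jump,'' so that firing pins the relevant $\omega$-coordinate to the single value that keeps $\widehat\eta(x_0)$ and $\widehat\eta(x_0+1)$ at $0$ and $1$.
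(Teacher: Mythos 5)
Your proposal is correct and takes essentially the same route as the paper's own proof: both invert the pair map to get $\eta$ (with $\eta(x_0-1)=0$ or $1$, $\eta(x_0)=0$, $\eta(x_0+1)=1$, $\eta(x_0+2)\in\{0,1\}$ according to the case), observe that the TASEP update cannot change the values at $x_0$ and $x_0+1$ --- including in the arising-pair variants, where the firing condition pins $\omega(t-1,\cdot)$ exactly as you say --- and conclude through the conjugacy of \autoref{thm:conjugacy}. The only difference is one of explicitness: you carry out the TASEP computation from \autoref{def Tasep} and justify the vanishing of the second component via \autoref{lemma key ident}, steps the paper states without detail.
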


\begin{proof}
Let $\eta$ be such that $(\theta,act)  = \PP\left(
\eta\right)  $. By hypothesis
\[
\eta\left(  x_0-1\right)  =0,\eta\left(  x_0\right)  =0,\eta\left(
x_0+1\right)  =1,\eta\left(  x_0+2\right)  =1.
\]
TASEP dynamics cannot change the values at $x_0$ and $x_0+1$, hence%
\[
\T_{\text{TASEP}}\left(  t,\omega,\eta\right)  \left(  x_0\right)
=0,\T_{\text{TASEP}}\left(  t,\omega,\eta\right)  \left(
x_0+1\right)  =1.
\]
This implies the result in the first case.

Now, assume $x_0-1$ is an arising pair point for $\left(  \left(
\theta,act\right)  ,\omega\right)  $, and $\theta\left(  x_0\right)  =0$,
$\theta\left(  x_0+1\right)  =-1$. We now have%
\begin{gather*}
\eta\left(  x_0-1\right)=1, \quad 
\eta\left(  x_0\right)  =0, \quad
\eta\left(x_0+1\right)  =1, \quad
\eta\left(  x_0+2\right)  =1\\
\omega\left(  t-1,x_0-1\right)   =0.
\end{gather*}
Again TASEP dynamics does not change the values at $x_0$ and $x_0+1$. The
same argument applies to the case when $x_0+1$ is an arising pair point for
$\left(  (\theta,act)  ,\omega\right)  $.
\end{proof}

\section{Pure-Jump Weak Solutions of Burgers' Equation} \label{Sect Burgers}

We consider in this section Burgers' equation \eqref{eq:burgers},
\[
\partial_{t}u+u\partial_{x}u=0.
\]
We are interested in bounded (non entropic!) weak solutions, so we
restrict the definition to bounded functions, although it could be more general.

\begin{definition}\label{def weak solution}
	We say that a bounded measurable function $u:\left[t_0,t_1\right]  \times\mathbb{R\rightarrow R}$ 
	is a weak solution on $\left[  t_0,t_1\right]  $ if:
	\begin{itemize}
		\item[i)] for every smooth test function $\varphi:\mathbb{R\rightarrow R}$ with
		compact support in $\R$ the function $t\mapsto\int_\R u\left(  t,x\right)  \varphi(x)  dx$ 
		is continuous on $\left[ t_0,t_1\right]  $;
		\item[ii)] for every smooth test function $\phi:\R^2\to\R$
		with compact support in $\left(  t_0,t_1\right)  \times\R$, we
		have \
		\[
		\int_{t_0}^{t_1}\int_{\R}\left(  u\left(  t,x\right)  \partial
		_{t}\phi\left(  t,x\right)  +\frac12u^{2}\left(  t,x\right)  \partial
		_{x}\phi\left(  t,x\right)  \right)  dxdt=0.
		\]
	\end{itemize}
\end{definition}

Given a test function $\varphi$, the function $t\mapsto\int_{\R%
}u\left(  t,x\right)  \varphi(x)  dx$ is always defined almost everywhere, by
Fubini-Tonelli theorem. We require that it is continuous, for a minor reason
appearing in the next Proposition. It is not restrictive in our examples.

In the sequel we shall piece together weak solutions defined on different
space-time domains: let us see two rules allowing us to do so. When we say that $u\left(
\overline{t},\cdot\right)  =v\left(  \overline{t},\cdot\right)  $ for a
certain $\overline{t}\in\left[  t_0,t_1\right]  $ we mean that
$\int_{\R}u\left(  \overline{t},x\right)  \varphi(x)
dx=\int_{\R}v\left(  \overline{t},x\right)  \varphi(x)
dx$ for all test functions $\varphi$ of the class above.

\begin{proposition}
\label{prop merge in time}Assume $u\left(  t,x\right)  $ is a weak solution on
$\left[  t_0,t_1\right]  $ and $v\left(  t,x\right)  $ a weak solution on
$\left[  t_1,t_2\right]  $, with $u\left(  t_1,\cdot\right)  =v\left(
t_1,\cdot\right)  $. Then the function $w$, defined on $\left[  t_0%
,t_2\right]  $, equal to $u$ on $\left[  t_0,t_1\right]  $ and $v$ on
$\left[  t_1,t_2\right]  $, is a weak solution on $\left[  t_0%
,t_2\right]  $.
\end{proposition}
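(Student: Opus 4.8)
The plan is to verify directly the two defining properties of \autoref{def weak solution} for the concatenated function $w$ on $[t_0,t_2]$. Property (i), continuity of $t\mapsto\int_\R w(t,x)\varphi(x)\,dx$, is immediate: on $[t_0,t_1]$ this map coincides with $t\mapsto\int_\R u(t,x)\varphi(x)\,dx$ and on $[t_1,t_2]$ with $t\mapsto\int_\R v(t,x)\varphi(x)\,dx$, each continuous by hypothesis, and the one-sided limits at $t_1$ agree precisely because $u(t_1,\cdot)=v(t_1,\cdot)$ in the weak sense. The real content is therefore property (ii).

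For property (ii), fix a test function $\phi$ with compact support in $(t_0,t_2)\times\R$ and split the time integral at $t_1$. The obstacle is that $\phi$ need not vanish at $t_1$, so one cannot insert $\phi$ directly into the weak formulations for $u$ and $v$, which demand test functions compactly supported in the \emph{open} subintervals. To circumvent this I would introduce a smooth temporal cutoff $\zeta_\epsilon(t)$ equal to $1$ on $[t_0,t_1-2\epsilon]$ and to $0$ on $[t_1-\epsilon,t_2]$; then $\phi\zeta_\epsilon$ vanishes near $t_0$ (because $\phi$ does) and near $t_1$ (because $\zeta_\epsilon$ does), hence is compactly supported in $(t_0,t_1)\times\R$ and admissible for the weak formulation of $u$. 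Applying that formulation and the Leibniz rule gives
\[
\int_{t_0}^{t_1}\!\int_\R \zeta_\epsilon\Big(u\,\partial_t\phi+\tfrac12 u^2\,\partial_x\phi\Big)\,dx\,dt=-\int_{t_0}^{t_1}\!\int_\R u\,\phi\,\zeta_\epsilon'\,dx\,dt.
\]

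Letting $\epsilon\downarrow0$, dominated convergence handles the left-hand side (since $\zeta_\epsilon\uparrow1$ and $u$ is bounded with $\phi$ of compact $x$-support), while on the right $-\zeta_\epsilon'$ is a nonnegative approximate identity of unit mass concentrating at $t_1$ from the left. To evaluate its limit I would write $\phi(t,x)=\phi(t_1,x)+O(\epsilon)$ uniformly on the shrinking support of $\zeta_\epsilon'$: the remainder is killed by boundedness of $u$, and for the leading term $\varphi(x):=\phi(t_1,x)$ is a fixed spatial test function, so property (i) yields $\int_{t_0}^{t_1}\!\int_\R(u\,\partial_t\phi+\tfrac12u^2\,\partial_x\phi)=\int_\R u(t_1,x)\phi(t_1,x)\,dx$. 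An entirely symmetric argument on $[t_1,t_2]$, with a cutoff vanishing near $t_1$ from the right, produces $\int_{t_1}^{t_2}\!\int_\R(v\,\partial_t\phi+\tfrac12v^2\,\partial_x\phi)=-\int_\R v(t_1,x)\phi(t_1,x)\,dx$, the opposite boundary sign arising because the concentrating mass now approaches $t_1$ from above.

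Summing the two contributions, the full integral for $w$ equals $\int_\R\big(u(t_1,x)-v(t_1,x)\big)\phi(t_1,x)\,dx$, which vanishes by the matching hypothesis $u(t_1,\cdot)=v(t_1,\cdot)$ applied to the spatial test function $\phi(t_1,\cdot)$. Hence property (ii) holds and $w$ is a weak solution on $[t_0,t_2]$. I expect the only delicate point to be the identification of these boundary terms, \emph{i.e.}\ justifying that the cutoff-derivative integrals converge to $\int_\R u(t_1,x)\phi(t_1,x)\,dx$ and $\int_\R v(t_1,x)\phi(t_1,x)\,dx$; this is exactly where the continuity requirement (i) is used, which explains why that requirement was imposed in \autoref{def weak solution}.
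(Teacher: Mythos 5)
Your proof is correct, and it rests on the same basic device as the paper's --- truncating the test function in time near the gluing instant $t_1$, applying the weak formulations of $u$ and $v$ separately on the two subintervals, and passing to the limit as the cutoff sharpens --- but the handling of the boundary terms is genuinely different. The paper uses a single \emph{symmetric} cutoff $\phi_n(t,x)=\phi(t,x)\left(1-\chi\left(n(t-t_1)\right)\right)$ with $\chi$ even, so that the only nontrivial error term, $\int\int w\,\phi\,\partial_t\chi_n\,dx\,dt$, vanishes outright: after using continuity of $t\mapsto\int_{\R}w(t,x)\phi(t,x)\,dx$ across $t_1$ (this is where property (i) and the hypothesis $u(t_1,\cdot)=v(t_1,\cdot)$ enter in the paper's argument) one reduces to $\int_{-2}^{2}\chi'(s)\,ds=0$, and no trace at $t_1$ is ever computed. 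You instead use two \emph{one-sided} cutoffs, and your boundary integrals do not vanish individually: each converges, via the approximate-identity argument together with the one-sided continuity from property (i), to a weak trace, $\int_{\R}u(t_1,x)\phi(t_1,x)\,dx$ from the left and $-\int_{\R}v(t_1,x)\phi(t_1,x)\,dx$ from the right, and the matching hypothesis is invoked only at the very end to cancel them. Your route costs a little more work --- you must freeze $\phi(t,\cdot)\approx\phi(t_1,\cdot)$ on the shrinking support and kill the $O(\epsilon)$ remainder by boundedness of $u$, a step the paper's symmetry trick renders unnecessary --- but it buys structural clarity: it shows that each solution admits a well-defined one-sided weak trace at $t_1$ and that concatenation works precisely when the traces match, thereby isolating exactly how the continuity requirement (i) of \autoref{def weak solution} is consumed, in agreement with the paper's remark that (i) was imposed for the sake of this Proposition. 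Your sign bookkeeping (the nonnegative unit-mass measures $-\zeta_\epsilon'$ concentrating at $t_1^-$ and $\tilde\zeta_\epsilon'$ at $t_1^+$) is correct as stated, so the argument is complete.
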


\begin{proof}
Let $\varphi:\R\to\R$ with compact support in $\R$.
Consider the function $t\mapsto\int_{\R}w\left(  t,x\right)
\varphi(x)  dx$, defined a.s. by Fubini-Tonelli theorem. By the
continuity of the function $t\mapsto\int_{\R}u\left(  t,x\right)
\varphi(x)  dx$ on $\left[  t_0,t_1\right]  $ and of
$t\mapsto\int_{\R}v\left(  t,x\right)  \varphi(x)  dx$ on
$\left[  t_1,t_2\right]  $ and by the property $u\left(  t_1%
,\cdot\right)  =v\left(  t_1,\cdot\right)  $, we deduce that $t\mapsto
\int_{\R}w\left(  t,x\right)  \varphi(x)  dx$ is continuous.

Given $\phi$ as in the definition, part (ii), introduce
\[
\phi_{n}\left(  t,x\right)  =\phi\left(  t,x\right)  \left(  1-\chi_{n}\left(
t-t_1\right)  \right)
\]
where $\chi_{n}\left(  s\right)  =\chi\left(  ns\right)$
and $\chi$ is smooth, $\chi(x)  =\chi\left(  -x\right)  $, with
values in $\left[  0,1\right]  $, equal to 1 in $\left[  -1,1\right]  $,
to zero outside $\left[  -2,2\right]  $; and take $n$ large enough. The
function $\phi_{n}\left(  t,x\right)  $, restricted to $\left(  t_0%
,t_1\right)  \times\R$, is a good test function for $u$, hence%
\[
\int_{t_0}^{t_1}\int_{\R}\left(  u\left(  t,x\right)  \partial
_{t}\phi_{n}\left(  t,x\right)  +\frac12u^{2}\left(  t,x\right)
\partial_{x}\phi_{n}\left(  t,x\right)  \right)  dxdt=0.
\]
Similarly on $\left(  t_1,t_2\right)  \times\R$ for $v$, hence%
\[
\int_{t_0}^{t_2}\int_{\R}\left(  w\left(  t,x\right)  \partial
_{t}\phi_{n}\left(  t,x\right)  +\frac12w^{2}\left(  t,x\right)
\partial_{x}\phi_{n}\left(  t,x\right)  \right)  dxdt=0.
\]
The same identity holds for $\phi$, completing the proof, if we show that
\begin{align*}
\lim_{n\rightarrow\infty}\int_{t_0}^{t_2}\int_{\R}w\left(
t,x\right)  \phi\left(  t,x\right)  \partial_{t}\chi_{n}\left(  t-t_1\right)  dxdt  & =0,\\
\lim_{n\rightarrow\infty}\int_{t_0}^{t_2}\int_{\R}w\left(
t,x\right)  \partial_{t}\phi\left(  t,x\right)  \chi_{n}\left(  t-t_1\right)  dxdt  & =0,\\
\lim_{n\rightarrow\infty}\int_{t_0}^{t_2}\int_{\R}w^{2}\left(
t,x\right)  \chi_{n}\left(  t-t_1\right)  \partial_{x}\phi\left(
t,x\right)  dxdt  & =0.
\end{align*}
The second and third limits are clear. The first claim is equivalent to%
\[
\lim_{n\rightarrow\infty}\int_{t_1-\frac{2}{n}}^{t_1+\frac{2}{n}}%
n\chi'\left(  n\left(  t-t_1\right)  \right)  \left(  \int%
_{\R}w\left(  t,x\right)  \phi\left(  t,x\right)  dx\right)  dt=0.
\]
It is easy, using also the boundedness of $w$, to show that the function
\begin{equation*}
	t\mapsto\int_{\R}w\left(  t,x\right)  \phi\left(  t,x\right)  dx
\end{equation*}
is continuous (approximate $\phi$ by functions piecewise constant in $t$).
With a similar argument we can replace this function by a constant in the
previous limit and thus reduce us to check the property
\[
\lim_{n\rightarrow\infty}\int_{-\frac{2}{n}}^{\frac{2}{n}}n\chi^{\prime
}\left(  nt\right)  dt=0
\]
(we have also changed variables). But this means
$\lim_{n\rightarrow\infty}\int_{-2}^{2}\chi'\left(  s\right)  ds=0$,
which is true by symmetry of $\chi$.
\end{proof}

\begin{proposition}
\label{prop merge in space-time}Assume that $u,v$ are two weak solutions, on
$\left[  t_0,t_1\right]  $, with disjoint supports, namely there are sets
$S_{u},S_{v}\subset\left[  t_0,t_1\right]  \times\R$, disjoint,
Borel measurable, such that $u=0$ a.s. outside $S_{u}$ and $v=0$ a.s. outside
$S_{v}$. Define
\[
w=u+v.
\]
Then $w$ is a weak solution. The result remains true when the intersection of
the supports has Lebesgue measure zero.
\end{proposition}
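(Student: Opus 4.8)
The plan is to verify the two defining properties of \autoref{def weak solution} directly for $w=u+v$, exploiting that both conditions are linear in the unknown except for the quadratic flux term, whose cross contribution vanishes thanks to the (almost) disjointness of the two supports. No regularity machinery beyond boundedness of $u,v$ and compact support of the test functions is needed.

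First I would dispatch property (i), which is immediate. For every admissible $\varphi$ we have $\int_\R w(t,x)\varphi(x)\,dx=\int_\R u(t,x)\varphi(x)\,dx+\int_\R v(t,x)\varphi(x)\,dx$, and each summand on the right is continuous in $t$ on $[t_0,t_1]$ by hypothesis; hence $t\mapsto\int_\R w(t,x)\varphi(x)\,dx$ is continuous as a sum of continuous functions.

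The substance lies in property (ii). Fixing a test function $\phi$ with compact support in $(t_0,t_1)\times\R$, I would expand $w=u+v$ and $w^2=u^2+2uv+v^2$ and split
\[
\int_{t_0}^{t_1}\!\!\int_\R\Big(w\,\partial_t\phi+\tfrac12 w^2\,\partial_x\phi\Big)\,dx\,dt
\]
into the weak-solution integral for $u$, the weak-solution integral for $v$, and a cross term $\int_{t_0}^{t_1}\int_\R uv\,\partial_x\phi\,dx\,dt$. The first two integrals vanish because $u$ and $v$ are weak solutions and the very same $\phi$ is an admissible test function for each; all integrals are finite since $u,v$ are bounded and $\phi$ is compactly supported.

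It then remains only to show that the cross term is zero, and this is the one place where the support hypothesis is used — so, even though it is elementary, I regard it as the crux of the argument. Since $u=0$ a.e.\ outside $S_u$ and $v=0$ a.e.\ outside $S_v$, the set $\{uv\neq0\}$ is contained, up to a Lebesgue-null set, in $S_u\cap S_v$; because the latter is empty (or, in the general statement, of Lebesgue measure zero), I conclude $uv=0$ a.e.\ on $[t_0,t_1]\times\R$, so $\int_{t_0}^{t_1}\int_\R uv\,\partial_x\phi\,dx\,dt=0$. Combining the three pieces yields the required weak-formulation identity for $w$, and together with (i) this shows $w$ is a weak solution. The only care needed is the measure-theoretic bookkeeping establishing that $\{uv\neq0\}$ is null; once that is in hand, the remainder is a direct linearity computation, so I do not anticipate a genuine obstacle.
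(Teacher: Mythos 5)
Your proof is correct and is exactly the argument the paper has in mind: the paper's entire proof is the remark that $w^{2}=u^{2}+v^{2}$ almost everywhere, which is precisely your observation that the cross term $2uv$ vanishes a.e.\ since $\{uv\neq0\}$ lies, up to a null set, in $S_u\cap S_v$. You have simply written out the linearity bookkeeping that the paper declares ``easily checked,'' so there is nothing to add.
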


\begin{proof}
All properties are easily checked. Notice that $w^{2}=u^{2}+v^{2}$ almost everywhere.
\end{proof}
\noindent
Let us recall the definition of the Heaviside function and its formal weak
derivative
\begin{equation*}
H(x)   =1_{[0,\infty)}(x)  =1_{\left\{
x\geq0\right\}  },\quad 
H'(x)   =\delta_{0}(x)  =\delta\left(
x=0\right)  .
\end{equation*}
Given $t_0\in\R$, the simplest example of a pure-jump weak 
solution ---which is not an entropy solution---
of Burgers' equation on $\left[  t_0,t_1\right]  $, for any $t_1>$ $t_0$, is
\begin{equation*}
u\left(  t,x\right)  =1_{\left\{
x\geq x_0+v\left(  t-t_0\right)  \right\}  }w  
=1_{\left\{
x-x_0-v\left(  t-t_0\right)  \geq0\right\}  }w
=H\left(  x-x_0-v\left(t-t_0\right)  \right)  w
\end{equation*}
for $t\in\left[  t_0,t_1\right]$, and with $v>0$ and $w=2v$.
Part (i) of \autoref{def weak solution} comes from%
\[
\int_{\R}u\left(  t,x\right)  \varphi(x)  dx=\int%
_{x_0+v\left(  t-t_0\right)  }^{\infty}w\varphi(x)  dx,
\]
and this will be the case in all examples below, hence we shall not repeat it. 
Checking condition (ii) of \autoref{def weak solution} is elementary but quite lengthy.
However, we can perform a formal computation:
\begin{gather*}
\partial_{t}u  =-\delta_{0}\left(  x-x_0-v\left(  t-t_0\right)  \right) wv,\\
u^{2}\left(  t,x\right) =u\left(  t,x\right)  w,\ \quad
\partial_{x}u^{2}  =\delta_{0}\left(  x-x_0-v\left(  t-t_0\right)\right)  w^{2},
\end{gather*}
which, by $w^{2}=2vw$, implies $\partial_{x}u^{2}=-2\partial_{t}u$.
In what follows we will deal with analogous computations in more complicated situations:
the above formal computation with Dirac's deltas is both concise and transparent,
so we will keep on making use of it,
but any such computation can be easily made rigorous in terms of couplings with test functions.

\subsection{Isolated quasi-particles}

Given $h>0$ (it will be typically small in our main results), $v>0$,
$t_1>t_0$, we call \textit{right-quasi-particle} on $\left[  t_0%
,t_1\right]  $ a function of the following form: for $(t,x)\in\R\times [t_0,t_1]$, and $w=2v$,
\begin{align*}
u\left(  t,x\right) 
&=1_{\left\{  x_0+v\left(  t-t_0\right)  -h\leq x<x_0+v\left(
t-t_0\right)  \right\}  }w\\
& =1_{\left\{  x\geq x_0+v\left(  t-t_0\right)  -h\right\}  }w-1_{\left\{
x\geq x_0+v\left(  t-t_0\right)  \right\}  }w\\
& =H\left(  x-x_0-v\left(  t-t_0\right)  +h\right)  w-H\left(
x-x_0-v\left(  t-t_0\right)  \right)  w.
\end{align*}
The latter is a weak solution of Burgers' equation on $\left[  t_0,t_1\right]  $:
\begin{align*}
\partial_{t}u  & =-\delta_{0}\left(  x-x_0-v\left(  t-t_0\right)
+h\right)  wv+\delta_{0}\left(  x-x_0-v\left(  t-t_0\right)  \right)  wv,\\
u^{2}\left(  t,x\right)   & =u\left(  t,x\right)  w,\\
\partial_{x}u^{2}  & =\delta_{0}\left(  x-x_0-v\left(  t-t_0\right)
+h\right)  w^{2}-\delta_{0}\left(  x-x_0-v\left(  t-t_0\right)  \right)
w^{2},
\end{align*}
hence $\partial_{x}u^{2}=-2\partial_{t}u$.
Regarded as a soliton, a right-quasi-particle moves to the right with velocity $v$.

A \textit{left-quasi-particle} on $\left[  t_0,t_1\right]$ has the form
(with $v=w/2>0$ as above)
\begin{align*}
u\left(  t,x\right)
&=
-1_{\left\{  x_0-v\left(  t-t_0\right)  \leq x<x_0-v\left(
t-t_0\right)  +h\right\}  }w,\\
& =-\left(  1_{\left\{  x\geq x_0-v\left(  t-t_0\right)  \right\}
}-1_{\left\{  x\geq x_0-v\left(  t-t_0\right)  +h\right\}  }\right)  w,\\
& =-\left(  H\left(  x-x_0+v\left(  t-t_0\right)  \right)  -H\left(
x-x_0+v\left(  t-t_0\right)  -h\right)  \right)  w.
\end{align*}
Again, as a soliton, a left-quasi-particle moves to the left, again with velocity $v$.

\begin{figure}
	\centering
	\begin{tikzpicture}[line cap=round,line join=round,>=triangle 45,x=0.6cm,y=0.6cm]
	\clip(5,-3) rectangle (25,14);
	\fill[line width=0pt,dash pattern=on 3pt off 3pt,fill=black,fill opacity=0.25] (15,6) -- (15,2) -- (22,6.67) -- (22,10.67) -- cycle;
	\fill[line width=0pt,fill=black,fill opacity=0.25] (15,6) -- (16,8) -- (23,12.67) -- (22,10.67) -- cycle;
	\fill[line width=0pt,fill=black,fill opacity=0.1] (12.67,2.67) -- (15,-2) -- (15,2) -- (12.67,6.67) -- cycle;
	\fill[line width=0pt,fill=black,fill opacity=0.1] (12.67,6.67) -- (13.67,8.67) -- (15,6) -- (14,4) -- cycle;
	\fill[line width=0pt,fill=black,fill opacity=0.1] (14,4) -- (15,6) -- (15,2) -- cycle;
	\fill[line width=0pt,fill=black,fill opacity=0.1] (15,2) -- (16,2.67) -- (16,0) -- (15,-2) -- cycle;
	\draw [->] (7,0.5) -- (24,0.5);
	\draw [->] (7,0.5) -- (11,8.5);
	\draw [->] (7,0.5) -- (7,11.5);
	\draw (24,1) node {$x$};
	\draw (11,9) node {$t$};
	\draw (7,12) node {$u$};
	\draw [dotted] (14.25,0.5)-- (20,12);
	\draw [dotted] (7.75,2)-- (22,2);
	\draw (15,2)-- (15,6);
	\draw (15,6)-- (16,8);
	\draw (15,6)-- (22,10.67);
	\draw (15,2)-- (22,6.67);
	\draw [dash pattern=on 3pt off 3pt] (16,8)-- (16,4);
	\draw [dash pattern=on 3pt off 3pt] (16,4)-- (16,2.67);
	\draw (16,2.67)-- (16,0);
	\draw (15,2)-- (15,-2);
	\draw (15,-2)-- (16,0);
	\draw (16,8)-- (23,12.67);
	\draw (12.67,2.67)-- (15,-2);
	\draw (13.67,8.67)-- (15,6);
	\draw [dash pattern=on 3pt off 3pt] (15,6)-- (16,4);
	\draw (15,-2)-- (16,0);
	\draw (16,0)-- (16,2.67);
	\draw (16,2.67)-- (15,2);
	\draw (15,2)-- (15,-2);
	\draw (15,2)-- (12.67,6.67);
	\draw [dash pattern=on 3pt off 3pt] (16,4)-- (22,8);
	\end{tikzpicture}
	\caption{Generation of a couple of right and left quasi-particles,
		in two different shades of gray.}
	\label{fig:quasiparticles} 
\end{figure}
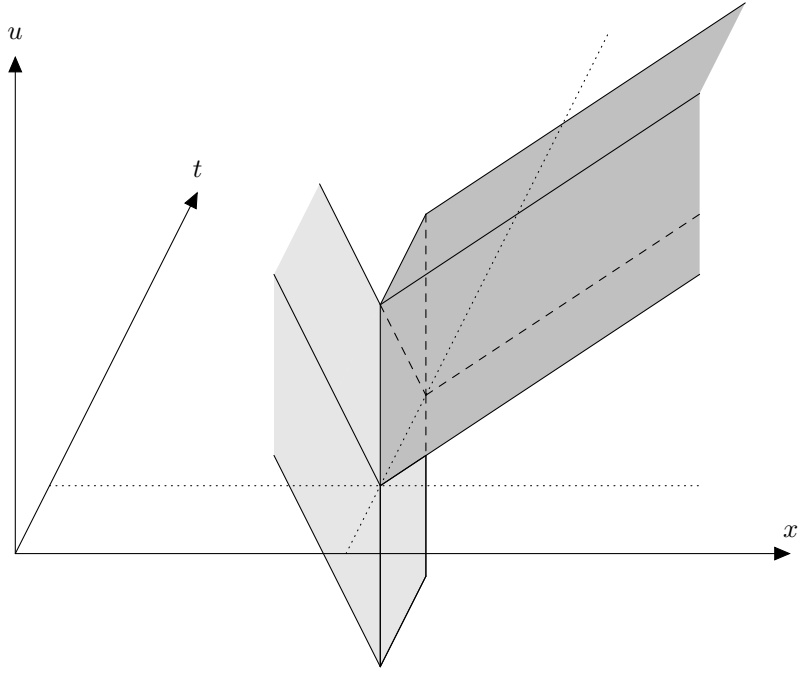

\subsection{Arising pairs of quasi-particles}
In our construction, the traveling solitons just defined usually emerge
somewhere and disappear somewhere else. 
In this subsection we describe the creation mechanism: 
quasi-particles appear in pairs, a positive and a negative one, moving in
opposite directions. After a short time of order $h$ they become isolated
solitons of the form described in the previous subsection. But at the
beginning, when they emerge and develop, they are made of two pieces with
increasing size smaller than $h$, see \autoref{fig:quasiparticles}.

\begin{remark}
	An arising pair comes from the
	identically zero solution. Hence, before the birth time $t_0$, we have $u=0$,
	which is a weak solution. In the time interval $\left[  t_0,t_0+\frac
	{h}{v}\right]  $ the pair develops. After time $t_0+\frac{h}{v}$ we have two
	disjoint isolated quasi-particles, which is again a weak solution, by
	\autoref{prop merge in space-time} and the examples of the previous
	subsection. Thus, thanks to \autoref{prop merge in time}, it is
	sufficient to define the arising pair in $\left[  t_0,t_0+\frac{h}%
	{v}\right]  $ and to prove that it is a weak solution there.
\end{remark}

Given $h>0$, $v>0$ and $t_0$, we call \textit{arising pair of quasi-particles
at }$\left(  t_0,x_0\right)  $, the function defined
for  $(t,x)\in \left[ t_0,t_0+\frac{h}{v}\right]\times \R $ by
\[
u\left(  t,x\right)  =1_{\left\{  x_0\leq x<x_0+v\left(  t-t_0\right)  \right\}
}w-1_{\left\{  x_0-v\left(  t-t_0\right)  \leq x<x_0\right\}  }w
\]
with $w=2v$. 
An expression for continuing the motion after $t_0+\frac{h}{v}$ is
\begin{multline*}
u\left(  t,x\right)  
=1_{\left\{  x_0+v\left(  t-t_0\right)
-\min\left(  h,v\left(  t-t_0\right)  \right)  \leq x<x_0+v\left(
t-t_0\right)  \right\}  }w\\
 -1_{\left\{  x_0-v\left(  t-t_0\right)  \leq x<x_0-v\left(
t-t_0\right)  +\min\left(  h,v\left(  t-t_0\right)  \right)  \right\}  }w.
\end{multline*}
Let us treat only the case $t\in\left[  t_0,t_0+\frac{h}{v}\right]  $. It
can also be written as%
\begin{align*}
u\left(  t,x\right)  
&=1_{\left\{  x\geq x_0\right\}  }w-1_{\left\{  x\geq
x_0+v\left(  t-t_0\right)  \right\}  }w-\left(  1_{\left\{  x\geq
x_0-v\left(  t-t_0\right)  \right\}  }-1_{\left\{  x\geq x_0\right\}
}\right)  w\\
& =H\left(  x-x_0\right)  w-H\left(  x-x_0-v\left(  t-t_0\right)
\right)  w\\
&\quad -\left(  H\left(  x-x_0+v\left(  t-t_0\right)  \right)  -H\left(
x-x_0\right)  \right)  w.
\end{align*}
It is a weak solution of Burgers' equation: indeed it holds
\begin{equation*}
	\partial_{t}u=\delta_{0}\left(  x-x_0-v\left(  t-t_0\right)  \right)
	wv-\delta_{0}\left(  x-x_0+v\left(  t-t_0\right)  \right)  wv
\end{equation*}
and, using the fact that the two pieces have disjoint support,
\begin{align*}
	u^{2}\left(  t,x\right)  
	&=1_{\left\{  x_0\leq x\leq x_0+v\left(
		t-t_0\right)  \right\}  }w^{2}+1_{\left\{  x_0-v\left(  t-t_0\right)
		\leq x\leq x_0\right\}  }w^{2}\\
& =H\left(  x-x_0\right)  w^{2}-H\left(  x-x_0-v\left(  t-t_0\right)
\right)  w^{2}\\
& +H\left(  x-x_0+v\left(  t-t_0\right)  \right)  w^{2}-H\left(
x-x_0\right)  w^{2},\\
\partial_{x}u^{2}  & =\delta_{0}\left(  x-x_0\right)  w^{2}-\delta
_{0}\left(  x-x_0-v\left(  t-t_0\right)  \right)  w^{2}\\
& +\delta_{0}\left(  x-x_0+v\left(  t-t_0\right)  \right)  w^{2}%
-\delta_{0}\left(  x-x_0\right)  w^{2} =-2\partial_{t}u.
\end{align*}

\subsection{Coalescing pairs of quasi-particles}
As anticipated above, usually a quasi-particle meets after a short time another
quasi-particle traveling in the opposite direction: in this case they annihilate each other. 
This process is described by the following solution:
given $h>0$, $v=w/2>0$ and $t_1$, we call \textit{coalescing pair of
quasi-particles at }$\left(  t_1,x_0\right)  $, the function defined for
$(t,x)\in \left[  t_1-\frac{h}{v},t_1\right]\times\R$ by
\begin{equation}\label{coalescing pairs}
u\left(  t,x\right) =1_{\left\{  x_0-v\left(  t_1-t\right)  \leq x<x_0\right\}
}w-1_{\left\{  x_0\leq x<x_0+v\left(  t_1-t\right)  \right\}}w.
\end{equation}
The proof that it is a weak solution is the same as for arising
quasi-particles: in fact one can also argue by time-reversal of Burgers' equation. 

\begin{remark}\label{rmk:paramlambda}
	The content of the present section is easily adapted to produce weak solutions of
	\begin{equation*}
		\partial_{t}u=\lambda \partial_{x}u^2
	\end{equation*}
	with $\lambda\neq 0$, the latter being the formal derivative of \eqref{eq:hamiltonjacobi}.
	Indeed, it suffices to replace the relation $w=2v$ between parameters $v,w$
	with $w=-\lambda v$.
\end{remark}

\section{TASEP pairs, ABDF and Burgers quasi-particles}\label{Sect Burgers TASEP ABDF}

In this section we describe a bijection between TASEP sample (and
therefore ABDF samples) and realizations of a random weak solution of
Burgers' equation. We associate special configurations of
Burgers solutions to ABDF configurations, using integer times $t_0\in\N$ for this correspondence;
then we interpolate for $t\in[t_0,t_0+1]$ using the special quasi-particle weak solutions
defined in the previous section.

This idea however is complicated by a tricky detail. In the ABDF model,
creation of new pairs happens at integer times $t_0-1$ (without being
visible) and is observed only at time $t_0$: discrete time allows to do so.
We refer to \autoref{fig:abdfdynamics} for an example:
new pairs arise from empty sites (diamond-shaped in the picture).
On the contrary, due to continuous time, for Burgers' equation we need to
create new pairs before integer times, so that the pair is fully formed at
integer time. The creation instant of a new pair will be at times
$t_0-1-\frac12$, $t_0\in\N_0$.

Strictly speaking, the correspondence between TASEP and Burgers' samples is not a conjugation of random
dynamical systems, as opposed to the one between
TASEP and ABDF, because in order to define the configuration at time
$t_0\in\N$ of the random weak solution of Burgers' equation we need
two pieces of information: the configuration of TASEP -- or ABDF -- 
at time $t_0$ and the noise values $\set{\omega (t_0,x)  ;x\in\Z}$. 
Nevertheless, it is a bijection of samples of stochastic processes, and thus it may allow to study the behavior of
each one of the two processes starting from the other one.

\begin{definition}\label{Def u_0}
Given $t_0\in\N$, $(\theta,act)\in\Lambda$, $\omega\in\Omega$, 
let us introduce the following sets:
\begin{align*}
M^{+}\left(  \theta\right)   
& =\left\{  z\in\Z:\theta\left(
z\right)  =1\right\} \\
M^{-}\left(  \theta\right)   
& =\left\{  z'\in\Z:\theta\left(
z'\right)  =-1\right\},\\
A\left(  \theta,act,\omega,t_0\right)  
&=\left\{  z\in\Z:act\left(
z\right)  =1,\omega\left(  t_0,z\right)  =0\right\},\\
MA^{+}\left(  \theta,act,\omega,t_0\right)   
& =M^{+}\left(  \theta\right)
\cup A\left(  \theta,act,\omega,t_0\right), \\
MA^{-}\left(  \theta,act,\omega,t_0\right)   
& =M^{-}\left(  \theta\right)
\cup A\left(  \theta,act,\omega,t_0\right),\\
C\left(  \theta,act,\omega,t_0\right)&=
\left\{ z\in\Z:z-1\in MA^{+}\left(  \theta,act,\omega
,t_0\right),\right.\\
&\qquad \left.  z+1\in MA^{-}\left(  \theta,act,\omega,t_0\right)  \right\}.
\end{align*}
(In plain words they are the sets of, respectively,  particles Moving to the
right, particles Moving to the left, sites of Arising pairs, sites of Moving
or Arising, sites of Coalescence).

Given $(\theta,act)  \in\Lambda$, $t_0\in\N$ and $\omega\in\Omega$, we define, for $x\in\R$,%
\begin{align*}
u(t_0,x,\omega)   &=2\sum_{z\in M^{+}\left(  \theta\right)
}1_{\left\{  z\leq x<z+\frac12\right\}  }-2\sum_{z\in M^{-}\left(
\theta\right)  }1_{\left\{  z-\frac12\leq x<z\right\}  }\\
& \quad +2\sum_{z\in A\left(  \theta,act,\omega,t_0\right)  }\left(  1_{\left\{
z\leq x<z+\frac12\right\}  }-1_{\left\{  z-\frac12\leq x<z\right\}
}\right)  .
\end{align*}%
\end{definition}

\begin{figure}
	\centering
	\begin{tikzpicture}[line cap=round,line join=round,>=triangle 45,x=0.8cm,y=0.8cm]
	\clip(-1,-2) rectangle (20,13);
	\fill[fill=black,fill opacity=0.25] (1,0) -- (1,2) -- (1.5,2) -- (1.5,0) -- cycle;
	\fill[fill=black,fill opacity=0.25] (3,0) -- (3,2) -- (3.5,2) -- (3.5,0) -- cycle;
	\fill[fill=black,fill opacity=0.1] (3,0) -- (2.5,0) -- (2.5,-2) -- (3,-2) -- cycle;
	\fill[fill=black,fill opacity=0.25] (6,0) -- (6,2) -- (6.5,2) -- (6.5,0) -- cycle;
	\fill[fill=black,fill opacity=0.1] (8,0) -- (7.5,0) -- (7.5,-2) -- (8,-2) -- cycle;
	\fill[fill=black,fill opacity=0.1] (9,0) -- (8.5,0) -- (8.5,-2) -- (9,-2) -- cycle;
	\fill[fill=black,fill opacity=0.1] (12,0) -- (11.5,0) -- (11.5,-2) -- (12,-2) -- cycle;
	\fill[fill=black,fill opacity=0.25] (12,0) -- (12,2) -- (12.5,2) -- (12.5,0) -- cycle;
	\fill[fill=black,fill opacity=0.25] (13,0) -- (13,2) -- (13.5,2) -- (13.5,0) -- cycle;
	\fill[fill=black,fill opacity=0.25] (14,0) -- (14,2) -- (14.5,2) -- (14.5,0) -- cycle;
	\fill[fill=black,fill opacity=0.25] (4,5) -- (4,7) -- (4.5,7) -- (4.5,5) -- cycle;
	\fill[fill=black,fill opacity=0.1] (8,5) -- (8,3) -- (7.5,3) -- (7.5,5) -- cycle;
	\fill[fill=black,fill opacity=0.1] (9,5) -- (8.5,5) -- (8.5,3) -- (9,3) -- cycle;
	\fill[fill=black,fill opacity=0.25] (9,5) -- (9,7) -- (9.5,7) -- (9.5,5) -- cycle;
	\fill[fill=black,fill opacity=0.1] (11,5) -- (11,3) -- (10.5,3) -- (10.5,5) -- cycle;
	\fill[fill=black,fill opacity=0.25] (13,5) -- (13,7) -- (13.5,7) -- (13.5,5) -- cycle;
	\fill[fill=black,fill opacity=0.25] (14,5) -- (14,7) -- (14.5,7) -- (14.5,5) -- cycle;
	\fill[fill=black,fill opacity=0.25] (5,10) -- (5,12) -- (5.5,12) -- (5.5,10) -- cycle;
	\fill[fill=black,fill opacity=0.1] (7,10) -- (6.5,10) -- (6.5,8) -- (7,8) -- cycle;
	\fill[fill=black,fill opacity=0.1] (8,10) -- (7.5,10) -- (7.5,8) -- (8,8) -- cycle;
	\fill[fill=black,fill opacity=0.1] (11,10) -- (10.5,10) -- (10.5,8) -- (11,8) -- cycle;
	\fill[fill=black,fill opacity=0.25] (13,10) -- (13,12) -- (13.5,12) -- (13.5,10) -- cycle;
	\fill[fill=black,fill opacity=0.25] (14,10) -- (14,12) -- (14.5,12) -- (14.5,10) -- cycle;
	\fill[fill=black,fill opacity=0.25] (12,5) -- (12,7) -- (12.5,7) -- (12.5,5) -- cycle;
	\fill[fill=black,fill opacity=0.1] (12,5) -- (11.5,5) -- (11.5,3) -- (12,3) -- cycle;
	\draw [->] (0,0) -- (15,0);
	\draw [->] (0,5) -- (15,5);
	\draw [->] (0,10) -- (15,10);
	\draw (0,0.3) node {$t=0$};
	\draw (0,5.3) node {$t=1$};
	\draw (0,10.3) node {$t=2$};
	\draw (1,0)-- (1,2);
	\draw (1,2)-- (1.5,2);
	\draw (1.5,2)-- (1.5,0);
	\draw (1.5,0)-- (1,0);
	\draw (3,0)-- (3,2);
	\draw (3,2)-- (3.5,2);
	\draw (3.5,2)-- (3.5,0);
	\draw (3.5,0)-- (3,0);
	\draw (3,0)-- (2.5,0);
	\draw (2.5,0)-- (2.5,-2);
	\draw (2.5,-2)-- (3,-2);
	\draw (3,-2)-- (3,0);
	\draw (6,0)-- (6,2);
	\draw (6,2)-- (6.5,2);
	\draw (6.5,2)-- (6.5,0);
	\draw (6.5,0)-- (6,0);
	\draw (8,0)-- (7.5,0);
	\draw (7.5,0)-- (7.5,-2);
	\draw (7.5,-2)-- (8,-2);
	\draw (8,-2)-- (8,0);
	\draw (9,0)-- (8.5,0);
	\draw (8.5,0)-- (8.5,-2);
	\draw (8.5,-2)-- (9,-2);
	\draw (9,-2)-- (9,0);
	\draw (12,0)-- (11.5,0);
	\draw (11.5,0)-- (11.5,-2);
	\draw (11.5,-2)-- (12,-2);
	\draw (12,-2)-- (12,0);
	\draw (12,0)-- (12,2);
	\draw (12,2)-- (12.5,2);
	\draw (12.5,2)-- (12.5,0);
	\draw (12.5,0)-- (12,0);
	\draw (13,0)-- (13,2);
	\draw (13,2)-- (13.5,2);
	\draw (13.5,2)-- (13.5,0);
	\draw (13.5,0)-- (13,0);
	\draw (14,0)-- (14,2);
	\draw (14,2)-- (14.5,2);
	\draw (14.5,2)-- (14.5,0);
	\draw (14.5,0)-- (14,0);
	\draw (4,5)-- (4,7);
	\draw (4,7)-- (4.5,7);
	\draw (4.5,7)-- (4.5,5);
	\draw (4.5,5)-- (4,5);
	\draw (8,5)-- (8,3);
	\draw (8,3)-- (7.5,3);
	\draw (7.5,3)-- (7.5,5);
	\draw (7.5,5)-- (8,5);
	\draw (9,5)-- (8.5,5);
	\draw (8.5,5)-- (8.5,3);
	\draw (8.5,3)-- (9,3);
	\draw (9,3)-- (9,5);
	\draw (9,5)-- (9,7);
	\draw (9,7)-- (9.5,7);
	\draw (9.5,7)-- (9.5,5);
	\draw (9.5,5)-- (9,5);
	\draw (11,5)-- (11,3);
	\draw (11,3)-- (10.5,3);
	\draw (10.5,3)-- (10.5,5);
	\draw (10.5,5)-- (11,5);
	\draw (13,5)-- (13,7);
	\draw (13,7)-- (13.5,7);
	\draw (13.5,7)-- (13.5,5);
	\draw (13.5,5)-- (13,5);
	\draw (14,5)-- (14,7);
	\draw (14,7)-- (14.5,7);
	\draw (14.5,7)-- (14.5,5);
	\draw (14.5,5)-- (14,5);
	\draw (5,10)-- (5,12);
	\draw (5,12)-- (5.5,12);
	\draw (5.5,12)-- (5.5,10);
	\draw (5.5,10)-- (5,10);
	\draw (7,10)-- (6.5,10);
	\draw (6.5,10)-- (6.5,8);
	\draw (6.5,8)-- (7,8);
	\draw (7,8)-- (7,10);
	\draw (8,10)-- (7.5,10);
	\draw (7.5,10)-- (7.5,8);
	\draw (7.5,8)-- (8,8);
	\draw (8,8)-- (8,10);
	\draw (11,10)-- (10.5,10);
	\draw (10.5,10)-- (10.5,8);
	\draw (10.5,8)-- (11,8);
	\draw (11,8)-- (11,10);
	\draw (13,10)-- (13,12);
	\draw (13,12)-- (13.5,12);
	\draw (13.5,12)-- (13.5,10);
	\draw (13.5,10)-- (13,10);
	\draw (14,10)-- (14,12);
	\draw (14,12)-- (14.5,12);
	\draw (14.5,12)-- (14.5,10);
	\draw (14.5,10)-- (14,10);
	\draw (12,5)-- (12,7);
	\draw (12,7)-- (12.5,7);
	\draw (12.5,7)-- (12.5,5);
	\draw (12.5,5)-- (12,5);
	\draw (12,5)-- (11.5,5);
	\draw (11.5,5)-- (11.5,3);
	\draw (11.5,3)-- (12,3);
	\draw (12,3)-- (12,5);
	\begin{scriptsize}
	\fill [color=black] (1,0) circle (2.5pt);
	\fill [color=black] (6,0) circle (2.5pt);
	\draw [color=black] (3,0) ++(-2.5pt,0 pt) -- ++(2.5pt,2.5pt)--++(2.5pt,-2.5pt)--++(-2.5pt,-2.5pt)--++(-2.5pt,2.5pt);
	\draw [color=black] (2,0) circle (2.5pt);
	\draw [color=black] (4,0) circle (2.5pt);
	\draw [color=black] (5,0) circle (2.5pt);
	\draw [color=black] (7,0) circle (2.5pt);
	\fill [color=black] (8,0) circle (2.5pt);
	\fill [color=black] (9,0) circle (2.5pt);
	\draw [color=black] (10,0) circle (2.5pt);
	\draw [color=black] (11,0) circle (2.5pt);
	\draw [color=black] (12,0) ++(-2.5pt,0 pt) -- ++(2.5pt,2.5pt)--++(2.5pt,-2.5pt)--++(-2.5pt,-2.5pt)--++(-2.5pt,2.5pt);
	\fill [color=black] (13,0) circle (2.5pt);
	\fill [color=black] (14,0) circle (2.5pt);
	\draw [color=black] (1,5) circle (2.5pt);
	\draw [color=black] (2,5) circle (2.5pt);
	\draw [color=black] (3,5) circle (2.5pt);
	\fill [color=black] (4,5) circle (2.5pt);
	\draw [color=black] (5,5) circle (2.5pt);
	\draw [color=black] (6,5) circle (2.5pt);
	\draw [color=black] (7,5) circle (2.5pt);
	\fill [color=black] (8,5) circle (2.5pt);
	\draw [color=black] (9,5) ++(-2.5pt,0 pt) -- ++(2.5pt,2.5pt)--++(2.5pt,-2.5pt)--++(-2.5pt,-2.5pt)--++(-2.5pt,2.5pt);
	\draw [color=black] (10,5) circle (2.5pt);
	\fill [color=black] (11,5) circle (2.5pt);
	\draw [color=black] (12,5) ++(-2.5pt,0 pt) -- ++(2.5pt,2.5pt)--++(2.5pt,-2.5pt)--++(-2.5pt,-2.5pt)--++(-2.5pt,2.5pt);
	\fill [color=black] (13,5) circle (2.5pt);
	\fill [color=black] (14,5) circle (2.5pt);
	\draw [color=black] (1,10) circle (2.5pt);
	\draw [color=black] (2,10) circle (2.5pt);
	\draw [color=black] (3,10) circle (2.5pt);
	\draw [color=black] (4,10) circle (2.5pt);
	\fill [color=black] (5,10) circle (2.5pt);
	\draw [color=black] (6,10) circle (2.5pt);
	\fill [color=black] (7,10) circle (2.5pt);
	\fill [color=black] (8,10) circle (2.5pt);
	\draw [color=black] (9,10) circle (2.5pt);
	\draw [color=black] (10,10) circle (2.5pt);
	\fill [color=black] (11,10) circle (2.5pt);
	\draw [color=black] (12,10) circle (2.5pt);
	\fill [color=black] (13,10) circle (2.5pt);
	\fill [color=black] (14,10) circle (2.5pt);
	\end{scriptsize}
	\end{tikzpicture}
	\caption{Profile of $u$ defined by \autoref{Def u_0}, starting from ABDF configuration
	at times $t=0,1,2$ of \autoref{fig:abdfdynamics}.}
	\label{fig:uintegertimes} 
\end{figure}
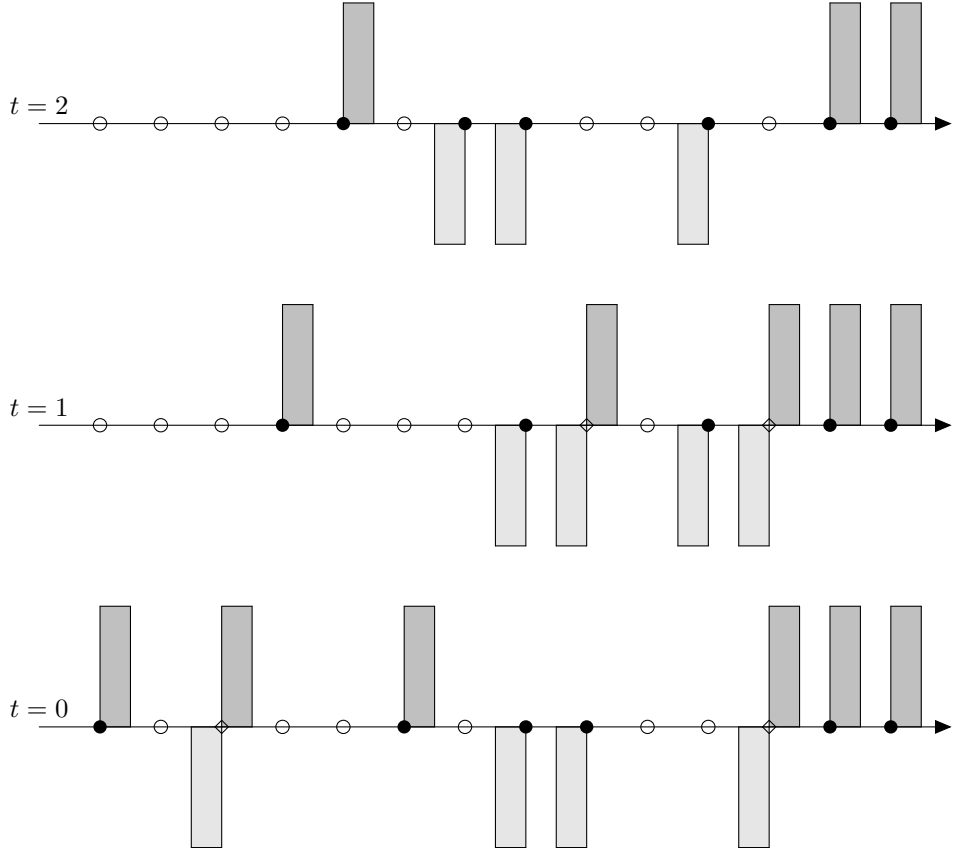
\noindent
To explain the definition, assume $(\theta,act)=\PP(\eta)$. 
The formula for $u (t_0,x)  $ includes three summands:
\begin{itemize}
\item a right-quasi-particle%
\[
2\cdot1_{\left\{  z\leq x<z+\frac12\right\}  }%
\]
at each point $z$ where $\theta\left(  z\right)  =1$, namely where there is a
right-particle of ABDF, or equivalently where TASEP has two consecutive empty spaces;

\item a left-quasi-particle%
\[
-2\cdot1_{\left\{  z-\frac12\leq x<z\right\}  }%
\]
at each point $z$ where $\theta\left(  z\right)  =-1$, namely where there is a
left-particle of ABDF, or equivalently where TASEP has two consecutive particles;

\item an arising pair of quasi-particles%
\[
2\left(  1_{\left\{  z\leq x<z+\frac12\right\}  }-1_{\left\{  z-\frac
{1}{2}\leq x<z\right\}  }\right)
\]
at each point $z$ where $\theta\left(  z\right)  =0$, $act\left(  z\right)
=1$ (or equivalently $\eta\left(  z\right)  =1$, $\eta\left(  z+1\right)  =0$)
and $\omega (t_0,x)  =0$, namely at each active empty position
of ABDF, where the noise prescribes the creation of two particles.
\end{itemize}
\noindent
We have chosen $h=\frac12$, $v=1$, and thus $w=2$, in the definitions of
quasi-particles of \autoref{Sect Burgers}; the value of $h$ can be
changed to any value in $\left(0,1\right)$ without consequences, while the value of
$v$ is coordinated with the scheme.

\subsection{Bijection with TASEP at Integer Times}
In the remainder of this section we need to show several facts. The first one is
the bijection property between TASEP (or ABDF) realizations and these
particular functions $u\left(t_0,x,\omega\right)$. This can be formalized
in different ways; we limit ourselves to state that, given the function
$u(t_0,x,\omega)  $, above, we can reconstruct the values of
$(\theta,act)$. The proof is straightforward, just noticing that each $z\in
\Z$ appears at most in one of the sums defining $u\left(
t_0,x,\omega\right)  $.

\begin{proposition}\label{prop theta from u}
Let $u(t_0,x,\omega)  $ be given by \autoref{Def u_0}, 
with respect to $(\theta,act)\in\Lambda$ and $\omega\in\Omega$. Then
\[
\theta\left(  z\right)  =\int_{z-\frac12}^{z+\frac12}u\left(
t_0,x,\omega\right)  dx.
\]
If $\theta\neq0$, $act$ is $ar\left(  \theta\right)  $ and thus can be
reconstructed from $u$. If $\theta=0$, for a.e. $\omega$ there are infinitely
many active points $z$ where $\omega\left(  t_0,z\right)  =0$; this means
that $u(t_0,x,\omega)  $ contains infinitely many points $z$
where the jump
\[
\left[  u\left(  t_0,\cdot,\omega\right)  \right]  _{z}:=\lim_{x\rightarrow
z^{+}}u(t_0,x,\omega)  -\lim_{x\rightarrow z^{-}}u\left(
t_0,x,\omega\right)
\]
is equal to 4. If these points are even, $act=alt_1$, otherwise
$act=alt_0$.
\end{proposition}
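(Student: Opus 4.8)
The plan is to read everything off \autoref{Def u_0} by exploiting the fact, already noted, that the three index sets $M^{+}(\theta)$, $M^{-}(\theta)$ and $A(\theta,act,\omega,t_0)$ are pairwise disjoint. Disjointness of $M^{+}$ and $M^{-}$ is immediate, while $A$ is disjoint from both because $z\in M^{+}\cup M^{-}$ means $\theta(z)\in\{-1,1\}$, which forces $act(z)=0$ by point (iii) of \autoref{def ABDF config preliminary}, so $z\notin A$. Hence for each $z\in\Z$ at most one of the three summands in $u(t_0,\cdot,\omega)$ is anchored at $z$.

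First I would prove the integral formula. The window $[z-\frac12,z+\frac12)$ splits into a right half $[z,z+\frac12)$ and a left half $[z-\frac12,z)$, each of length $\frac12$. Since every building block is an indicator of an interval $[w,w+\frac12)$ or $[w-\frac12,w)$ with $w\in\Z$, a check of endpoints shows that the right half carries the value $+2$ exactly when $z\in M^{+}(\theta)$ (a right-quasi-particle) or $z\in A$ (the right lobe of an arising pair), and $0$ otherwise; symmetrically the left half carries $-2$ exactly when $z\in M^{-}(\theta)$ or $z\in A$, and $0$ otherwise. Integrating and using disjointness gives $\int_{z-\frac12}^{z+\frac12}u(t_0,x,\omega)\,dx$ equal to $1,-1,0,0$ according as $\theta(z)=1$, $\theta(z)=-1$, $z\in A$, or $z$ lies in none of the sets; in every case this equals $\theta(z)$.

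Once $\theta$ is recovered, the reconstruction of $act$ splits in two. If $\theta\neq0$ then $(\theta,act)\in\Lambda$ forces $act=ar(\theta)$ by point (b) of \autoref{def ABDF config}, and $ar(\theta)$ is a deterministic function of $\theta$, hence of $u$. If $\theta=0$ then $M^{+}=M^{-}=\emptyset$ and $u$ is the superposition of arising pairs at the points of $A$ only; here $act\in\{alt_0,alt_1\}$, so the active sites fill one parity class (the odd integers for $alt_0$, the even integers for $alt_1$). Distinct points of $A$ are at distance at least $2$, so the supports $[z-\frac12,z+\frac12)$ of the pairs are disjoint; thus at each $z\in A$ the left lobe $-2$ meets the right lobe $+2$ and the jump is $[u(t_0,\cdot,\omega)]_{z}=4$, whereas at every other integer the jump is $0$. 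The integer points carrying a jump $4$ are therefore exactly the points of $A$, all of the parity dictated by $\alpha$, and reading that parity off returns $act$.

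The one genuinely non-algebraic point, and the main thing to justify, is that for $\theta=0$ the set $A$ is infinite for almost every $\omega$, which is where randomness enters. The active sites form an infinite arithmetic progression, and along it the variables $\set{\omega(t_0,z)}$ are i.i.d.\ Bernoulli$(\tfrac12)$ under $P$; so the events $\set{\omega(t_0,z)=0}$ are independent of probability $\tfrac12$ each, and by Borel--Cantelli infinitely many occur almost surely. This makes the parity of the jump-$4$ points well defined and observable, completing the reconstruction.
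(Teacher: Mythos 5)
Your proof is correct and follows exactly the route the paper indicates: the paper's entire argument is the remark that each $z\in\Z$ appears in at most one of the three sums defining $u(t_0,x,\omega)$, which is precisely your disjointness observation, from which the integral formula and the jump-$4$ characterization follow as you compute. Your Borel--Cantelli step for the case $\theta=0$ correctly supplies the one detail the paper leaves implicit (the a.e.\ existence of infinitely many points $z$ with $act(z)=1$ and $\omega(t_0,z)=0$, using independence of the coordinates $\omega(t_0,z)$ along the relevant parity class).
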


\subsection{Continuation Shortly After Integer Times}
Next, we have to interpolate the functions $u(t_0,x,\omega)$ between integer times. The particular weak solutions introduced
in \autoref{Sect Burgers} prescribe a unique continuation of $u(t_0,x,\omega)$ from the ``initial''
 value at time $t_0\in\N$ to all values of $t$ in $\left[
t_0,t_0+\frac12\right]  $ (the value $\frac12$ is related to the
choice $h=\frac12$):
\begin{align*}
u\left(  t,x,\omega\right)   & =2\sum_{z\in MA^{+}\left(  \theta
,act,\omega,t_0\right)  }1_{\left\{  z+\left(  t-t_0\right)  \leq
x<z+\left(  t-t_0\right)  +\frac12\right\}  }\\
& -2\sum_{z'\in MA^{-}\left(  \theta,act,\omega,t_0\right)
}1_{\left\{  z'-\left(  t-t_0\right)  -\frac12\leq x<z^{\prime
}-\left(  t-t_0\right)  \right\}  }.
\end{align*}%

\begin{proposition}\label{prop first 1/2}
The function thus defined for $t\in\left[  t_0,t_0+\frac12\right]$, $x\in\R$ 
is a weak solution of Burgers' equation.
\end{proposition}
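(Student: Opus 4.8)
The plan is to recognize the prescribed function as a locally finite superposition of the isolated quasi-particles of \autoref{Sect Burgers} and to invoke \autoref{prop merge in space-time}. Writing $s=t-t_0\in[0,\tfrac12]$, each summand
\[
2\cdot 1_{\{z+s\leq x<z+s+\frac12\}}, \qquad -2\cdot 1_{\{z'-s-\frac12\leq x<z'-s\}}
\]
is, for $v=1$, $h=\tfrac12$, $w=2$, exactly a right-quasi-particle (for $z\in MA^+$, with $x_0=z+\tfrac12$) or a left-quasi-particle (for $z'\in MA^-$, with $x_0=z'-\tfrac12$) in the sense of \autoref{Sect Burgers}. Hence each summand is individually a weak solution on $[t_0,t_0+\tfrac12]$, with the term-by-term identity $\partial_x u^2=-2\partial_t u$ holding as computed there. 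Since at every $(t,x)$ at most one summand is nonzero, the sum is locally finite; it therefore suffices, by the obvious countable extension of \autoref{prop merge in space-time} (whose only hypothesis is that the pieces have essentially disjoint supports, so that $u^2$ equals the sum of the squares a.e.), to check that the supports of the summands are pairwise disjoint up to a Lebesgue-null set.

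At fixed $t$ the right-quasi-particle attached to $z\in MA^+$ occupies $[z+s,z+s+\tfrac12)$ and the left one attached to $z'\in MA^-$ occupies $[z'-s-\tfrac12,z'-s)$. Two distinct right-quasi-particles (respectively two distinct left ones) translate in parallel from integer base points at distance $\geq 1>\tfrac12$, hence never meet; a right one at $z$ and a left one at $z'\leq z$ move apart and stay disjoint for all $s\geq 0$. The only mechanism producing an overlap is a right-quasi-particle at $z$ and a left one at $z'>z$ moving towards each other, whose half-open supports first meet at $s=\tfrac{z'-z-1}{2}$. Thus the proposition reduces \emph{entirely} to excluding the resonant configuration $z'=z+1$ with $z\in MA^+$ and $z+1\in MA^-$, which would overlap on a set of positive measure for every $s>0$.

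This exclusion is the crux and is where the defining constraints of $\Lambda$ enter. Since $MA^+=M^+\sqcup A$ and $MA^-=M^-\sqcup A$, the pair $(z\in MA^+,\ z+1\in MA^-)$ falls into four cases, each impossible. If $\theta(z)=1$, $\theta(z+1)=-1$, then $z,z+1$ are consecutive particles of opposite type with no empty site in between, i.e.\ an even number of interstitial sites, contradicting the odd-number requirement of \autoref{def ABDF config preliminary}(i). If $\theta(z)=1$ and $z+1\in A$, then the empty site $z+1$ sits at odd distance $k=1$ to the right of a right-particle, whence $ar(\theta)(z+1)=0$ by \autoref{def ABDF config preliminary}(iv) and $z+1\notin A$; symmetrically, if $z\in A$ and $\theta(z+1)=-1$, then $z$ sits at odd distance $h=1$ to the left of a left-particle, forcing $ar(\theta)(z)=0$ by point (v). Finally, two active empty sites cannot be adjacent: for a common reference particle the parity factor $(-1)^{k}$ appearing in $ar(\theta)$ flips between $z$ and $z+1$, so at most one of them has activation record $1$. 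Consequently $z'-z\geq 2$ whenever $z\in MA^+$, $z'\in MA^-$, $z'>z$, so the earliest contact occurs at $s=\tfrac{z'-z-1}{2}\geq\tfrac12$; and for $z'-z=2$ the two half-open supports merely touch at the single endpoint instant $s=\tfrac12$ (the half-integer time at which the ensuing coalescence, treated on the next interval, begins) while remaining disjoint as sets even there. All supports are therefore pairwise disjoint on $[t_0,t_0+\tfrac12]$ up to a null set, and \autoref{prop merge in space-time} yields the claim. The main obstacle is exactly this parity bookkeeping; all analytic content is already contained in \autoref{Sect Burgers} and \autoref{prop merge in space-time}.
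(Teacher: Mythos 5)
Your proof is correct and follows essentially the same route as the paper's: the same decomposition into isolated quasi-particles, the same reduction via \autoref{prop merge in space-time} to disjointness of supports, and the same key fact $z'\geq z+2$ established through the parity rules of $\Lambda$ and the activation map $ar$. Your four-case analysis excluding $z'=z+1$ matches the paper's argument (which groups the cases by whether $\theta(z)=1$ or $act(z)=1$), and your endpoint observation at $s=\tfrac12$ is the same half-open-interval remark implicit in the paper's inequality $z'-(t-t_0)-\tfrac12\geq z+(t-t_0)+\tfrac12$.
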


\begin{proof}
Coincidence of the last formula at time $t_0$ with the initial condition
$u(t_0,x,\omega)  $ above is obvious. The statement is a consequence of a simple fact:
every pair of terms taken from the two sums defining $u\left(  t,x,\omega\right)  $ is made of quasi-particles
with disjoint supports on $\left[  t_0,t_0+\frac12\right]  $, and thus
the sum solves Burgers' equation in weak sense --by \autoref{prop merge in space-time} --
on the interval $\left[  t_0,t_0+\frac12\right]  $.

Let us check that supports are disjoint.
Quasi-particles moving to the right (those corresponding to the first sum) are
clearly isolated between themselves, having a ``support'' of size $\frac12$ of the form $[x\left(
t\right)  ,x\left(  t\right)  +\frac12)$ with $x\left(  t\right)  $ of the
form $z+\left(  t-t_0\right)  $ with $z$ of distance at least one from each other.
The same holds for left-quasi-particles, among themselves. 
Thus the problem is only about the interaction between a right-quasi-particle
\[
2\cdot1_{\left\{  z+\left(  t-t_0\right)  \leq x<z+\left(  t-t_0\right)
+\frac12\right\}  }%
\]
and a left-quasi-particle%
\[
-2\cdot1_{\left\{  z'-\left(  t-t_0\right)  -\frac12\leq
x<z'-\left(  t-t_0\right)  \right\}  }%
\]
with $z$ in the first sum and $z'$ in the second one. The supports of
these two solitons have size $\frac12$ and are of the form $[x\left(
t\right)  ,x\left(  t\right)  +\frac12)$ with $x\left(  t\right)
=z+\left(  t-t_0\right)  $ and $[x'\left(  t\right)  -\frac{1}%
{2},x'\left(  t\right)  )$ with $x'\left(  t\right)
=z'-\left(  t-t_0\right)  $, respectively. We claim that these
supports are disjoint, for $t\in\left[  t_0,t_0+\frac12\right]  $. If
$z'\leq z$ this is clear, since $x'\left(  t\right)  $ is
decreasing and $x\left(  t\right)  $ is increasing. 
When $z'>z$ we claim that sets $[x\left(  t\right)  ,x\left(  t\right)  +\frac12)$
and $[x'\left(  t\right)  -\frac12,x'\left(  t\right)  )$
are disjoint because $\left(  t-t_0\right)  +\frac12\leq1$
and $z'\geq z+2$
(to be shown below) and thus
\[
z'-\left(  t-t_0\right)  -\frac12\geq z+\left(  t-t_0\right)
+\frac12.
\]
The key fact $z'\geq z+2$ requires inspection into the conditions that
$z$ and $z'$ belong to two different sums. Recall we are treating the
case $z'>z$, hence the two solitons are not the result of an arising pair.

We have $z\in MA^{+}\left(  \theta,act,\omega,t_0\right)  $. This is the
union of two cases. Consider the case $\theta\left(  z\right)  =1$ and assume
by contradiction that $z'=z+1$. By the rules of $\Lambda$, $\theta\left(  z'\right)  $ cannot be $-1$ 
(because the number of integer points strictly between $z$ and $z'$ is even);
by the rules of the map $ar$, if $\theta\left(  z'\right)  =0$, $ar\left(  \theta\right)  \left(
z'\right)  $ is zero. Hence we have found a contradiction.

Consider the case $act\left(  z\right)  =1$ and assume by contradiction that
$z'=z+1$. Again by the rules of $\Lambda$ we cannot have
$\theta\left(  z'\right)  =-1$ and we cannot have $act\left(
z'\right)  =1$. Hence, we get a contradiction also in this case, and this
proves $z'\geq z+2$.
\end{proof}

\begin{figure}
	\centering
	\begin{tikzpicture}[line cap=round,line join=round,>=triangle 45,x=0.8cm,y=0.8cm]
	\clip(-1,-1) rectangle (16,4);
	\fill[fill=black,fill opacity=0.25] (0,0) -- (1,1) -- (1,0.5) -- (0.5,0) -- cycle;
	\fill[fill=black,fill opacity=0.1] (1,0.5) -- (1.5,0) -- (2,0) -- (1,1) -- cycle;
	\fill[fill=black,fill opacity=0.25] (2,0) -- (4.5,2.5) -- (5,2.5) -- (2.5,0) -- cycle;
	\fill[fill=black,fill opacity=0.25] (5,0) -- (6,1) -- (6,0.5) -- (5.5,0) -- cycle;
	\fill[fill=black,fill opacity=0.1] (6,1) -- (7,0) -- (6.5,0) -- (6,0.5) -- cycle;
	\fill[fill=black,fill opacity=0.1] (7.5,0) -- (5,2.5) -- (5.5,2.5) -- (8,0) -- cycle;
	\fill[fill=black,fill opacity=0.1] (8,0.5) -- (6,2.5) -- (6.5,2.5) -- (8,1) -- cycle;
	\fill[fill=black,fill opacity=0.25] (8,1) -- (9,2) -- (9,1.5) -- (8,0.5) -- cycle;
	\fill[fill=black,fill opacity=0.1] (9,2) -- (11,0) -- (10.5,0) -- (9,1.5) -- cycle;
	\fill[fill=black,fill opacity=0.1] (11,1) -- (9.5,2.5) -- (10,2.5) -- (11,1.5) -- cycle;
	\fill[fill=black,fill opacity=0.25] (11,1.5) -- (12,2.5) -- (12.5,2.5) -- (11,1) -- cycle;
	\fill[fill=black,fill opacity=0.25] (11,0) -- (13.5,2.5) -- (14,2.5) -- (11.5,0) -- cycle;
	\fill[fill=black,fill opacity=0.25] (12,0) -- (14,2) -- (14,1.5) -- (12.5,0) -- cycle;
	\fill[fill=black,fill opacity=0.25] (13,0) -- (14,1) -- (14,0.5) -- (13.5,0) -- cycle;
	\draw (0,0)-- (1,1);
	\draw (1,1)-- (1,0.5);
	\draw (1,0.5)-- (0.5,0);
	\draw (0.5,0)-- (0,0);
	\draw (1,0.5)-- (1.5,0);
	\draw (1.5,0)-- (2,0);
	\draw (2,0)-- (1,1);
	\draw (1,1)-- (1,0.5);
	\draw (2,0)-- (4.5,2.5);
	\draw (5,2.5)-- (2.5,0);
	\draw (2.5,0)-- (2,0);
	\draw (5,0)-- (6,1);
	\draw (6,1)-- (6,0.5);
	\draw (6,0.5)-- (5.5,0);
	\draw (5.5,0)-- (5,0);
	\draw (6,1)-- (7,0);
	\draw (7,0)-- (6.5,0);
	\draw (6.5,0)-- (6,0.5);
	\draw (6,0.5)-- (6,1);
	\draw (7.5,0)-- (5,2.5);
	\draw (5.5,2.5)-- (8,0);
	\draw (8,0)-- (7.5,0);
	\draw (8,0.5)-- (6,2.5);
	\draw (6.5,2.5)-- (8,1);
	\draw (8,1)-- (8,0.5);
	\draw (8,1)-- (9,2);
	\draw (9,2)-- (9,1.5);
	\draw (9,1.5)-- (8,0.5);
	\draw (8,0.5)-- (8,1);
	\draw (9,2)-- (11,0);
	\draw (11,0)-- (10.5,0);
	\draw (10.5,0)-- (9,1.5);
	\draw (9,1.5)-- (9,2);
	\draw (11,1)-- (9.5,2.5);
	\draw (10,2.5)-- (11,1.5);
	\draw (11,1.5)-- (11,1);
	\draw (11,1.5)-- (12,2.5);
	\draw (12.5,2.5)-- (11,1);
	\draw (11,1)-- (11,1.5);
	\draw (11,0)-- (13.5,2.5);
	\draw (14,2.5)-- (11.5,0);
	\draw (11.5,0)-- (11,0);
	\draw (12,0)-- (14,2);
	\draw (14,1.5)-- (12.5,0);
	\draw (12.5,0)-- (12,0);
	\draw (13,0)-- (14,1);
	\draw (14,0.5)-- (13.5,0);
	\draw (13.5,0)-- (13,0);
	\draw [dotted] (0,1)-- (14.5,1);
	\draw [dotted] (0,2)-- (14.5,2);
	\draw [dotted] (1,3)-- (1,0);
	\draw [dotted] (2,3)-- (2,0);
	\draw [dotted] (3,3)-- (3,0);
	\draw [dotted] (4,3)-- (4,0);
	\draw [dotted] (5,3)-- (5,0);
	\draw [dotted] (6,3)-- (6,0);
	\draw [dotted] (7,3)-- (7,0);
	\draw [dotted] (8,3)-- (8,0);
	\draw [dotted] (9,3)-- (9,0);
	\draw [dotted] (10,3)-- (10,0);
	\draw [dotted] (11,3)-- (11,0);
	\draw [dotted] (12,3)-- (12,0);
	\draw [dotted] (13,3)-- (13,0);
	\draw [dotted] (14,3)-- (14,0);
	\draw [->] (-0.5,0) -- (14.5,0);
	\draw [->] (0,-0.5) -- (0,3);
	\draw (14.5,0.3) node {$x$};
	\draw (0,3.3) node {$t$};
	\end{tikzpicture}
	\caption{Evolution of $u$ considered in \autoref{prop first 1/2} and \autoref{prop second 1/2}
		built upon ABDF evolution of \autoref{fig:abdfdynamics}.
		Two shades of gray denote, as above, right and left quasi-particles.
		The dotted grid has side length $1$, so $u=\pm 2$ respectively on dark and light gray areas.}
	\label{fig:uevolution} 
\end{figure}

\subsection{Continuation Shortly Before Integer Times}
To continue the solution in time intervals $t\in\left[  t_0+\frac12,t_0+1\right]$ 
is somewhat trickier because of two phenomena:
coalescence of quasi-particles, and growth of new pairs. We are now at time $t_0+1/2$, namely
\begin{align*}
	u\left(  t_0+\frac12,x,\omega\right)  
	&=2\sum_{z\in MA^{+}\left(\theta,act,\omega,t_0\right)  }1_{\left\{  z+\frac12\leq x<z+1\right\}}\\
	&\quad -2\sum_{z'\in MA^{-}\left(  \theta,act,\omega,t_0\right)
	}1_{\left\{  z'-1\leq x<z'-\frac12\right\}  }.
\end{align*}
The continuation depends on this configuration and on the section of the noise%
\[
\left\{  \omega\left(  t_0+1,x\right)  ;x\in\Z\right\}  .
\]
Indeed, at time $t_0+1$ we could observe the result of arising pairs, as it
was above at time $t_0$. These pairs started existing at time $t_0%
+\frac12$.

Let us also write explicitly where we want to arrive at at time $t_0+1$: called%
\[
\left(  \theta',act'\right)  =\T_{\text{ABDF}}\left(
t_0,\omega,(\theta,act)  \right)
\]
we want to have%
\begin{align*}
u\left(  t_0+1,x,\omega\right)   & :=2\sum_{z\in M^{+}\left(  \theta
'\right)  }1_{\left\{  z\leq x<z+\frac12\right\}  }-2\sum_{z\in
M^{-}\left(  \theta'\right)  }1_{\left\{  z-\frac12\leq
x<z\right\}  }\\
& +2\sum_{z\in A\left(  \theta',act',\omega,t_0+1\right)
}\left(  1_{\left\{  z\leq x<z+\frac12\right\}  }-1_{\left\{  z-\frac
{1}{2}\leq x<z\right\}  }\right)  .
\end{align*}%

\begin{proposition}\label{prop second 1/2}
Given the sets defined above, let us introduce also
\begin{align*}
MA_{iso}^{+}\left(  \theta,act,\omega,t_0\right)   & =\left\{  z\in
MA^{+}\left(  \theta,act,\omega,t_0\right)  :z+1\notin C\left(
\theta,act,\omega,t_0\right)  \right\}, \\
MA_{iso}^{-}\left(  \theta,act,\omega,t_0\right)   & =\left\{  z\in
MA^{-}\left(  \theta,act,\omega,t_0\right)  :z-1\notin C\left(
\theta,act,\omega,t_0\right)  \right\}  .
\end{align*}

Consider the functions $u\left(  t_0+\frac{1}%
{2},x,\omega\right)  $ and $u\left(  t_0+1,x,\omega\right)  $ we just defined.
The following function $u\left(  t,x,\omega\right)  $, $t\in\left[
t_0+\frac12,t_0+1\right]  $, interpolates between them and is a weak
solution of Burgers' equation: we set, for $t\in\left[  t_0+\frac12,t_0+1\right] $,
\begin{equation*}
	u\left(  t,x,\omega\right)  =u_{iso}\left(  t,x,\omega\right)  +u_{coa}\left(
	t,x,\omega\right)  +u_{ari}\left(  t,x,\omega\right),
\end{equation*}
where $u_{iso}$ collects \emph{isolated} quasi-particles,
\begin{align*}
	u_{iso}\left(  t,x,\omega\right)   
	&=\sum_{z\in MA_{iso}^{+}\left(\theta,act,\omega,t_0\right)  }u_{iso}^{(z,+)}\left(t,x,\omega\right)\\
	&\quad +\sum_{z'\in MA_{iso}^{-}\left(  \theta,act,\omega,t_0\right)  }u_{iso}^{\left(  z',-\right)  }\left(t,x,\omega\right),\\
	u_{iso}^{(z,+)  }\left(  t,x,\omega\right)   
	&=2\cdot
	1_{\left\{  z+\left(  t-t_0\right)  \leq x<z+\left(  t-t_0\right)
		+\frac12\right\}  },\\
	u_{iso}^{\left(  z',-\right)  }\left(  t,x,\omega\right)   
	&=-2\cdot1_{\left\{  z'-\left(  t-t_0\right)  -\frac12\leq
		x<z'-\left(  t-t_0\right)  \right\}  },
\end{align*}
$u_{coa}$ the \emph{coalescing} quasi-particles,
\begin{align*}
	u_{coa}\left(  t,x,\omega\right)   
	& =\sum_{x_0\in C\left(  \theta,act,\omega,t_0\right)  }u_{coa}^{\left(  x_0\right)  }\left(t,x,\omega\right),\\
	u_{coa}^{\left(  x_0\right)  }\left(  t,x,\omega\right)   
	&=2\cdot 1_{\left\{  x_0-\left(  t_0+1-t\right)  \leq x<x_0\right\}  }
	-2\cdot1_{\left\{  x_0\leq x<x_0+\left(  t_0+1-t\right)  \right\}  },
\end{align*}
and finally, with $\left(  \theta',act'\right)  =\T_{\text{ABDF}}\left(  t_0,\omega,(\theta,act)  \right)  $,
$u_{ari}$ corresponds to \emph{arising} pairs,
\begin{align*}
u_{ari}\left(  t,x,\omega\right)   
& =\sum_{x_0\in A\left(  \theta^{\prime
},act',\omega,t_0+1\right)  }u_{ari}^{\left(  x_0\right)  }\left(
t,x,\omega\right),\\
u_{ari}^{\left(  x_0\right)  }\left(  t,x,\omega\right)   
& =2\cdot
1_{\left\{  x_0\leq x<x_0+\left(  t-t_0-\frac12\right)  \right\}
}-2\cdot1_{\left\{  x_0-\left(  t-t_0-\frac12\right)  \leq
x<x_0\right\}  .}
\end{align*}
\end{proposition}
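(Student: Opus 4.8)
The plan is to reduce everything to the two merging rules already at our disposal, Proposition~\ref{prop merge in space-time} (a sum of weak solutions with essentially disjoint supports is a weak solution) and Proposition~\ref{prop merge in time} (time concatenation of matching weak solutions), together with the fact established in Section~\ref{Sect Burgers} that each isolated quasi-particle $u_{iso}^{(z,\pm)}$, each coalescing pair $u_{coa}^{(x_0)}$ and each arising pair $u_{ari}^{(x_0)}$ is itself a weak solution. Treating each of these as a single indivisible ``molecule'', the proposition follows once I show (i) that their supports are pairwise disjoint up to a Lebesgue-null set on $[t_0+\frac12,t_0+1]$, so that Proposition~\ref{prop merge in space-time} applies to the whole sum, and (ii) that the sum takes the prescribed values at the endpoints $t=t_0+\frac12$ and $t=t_0+1$.

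The endpoint matching in (ii) I would treat first, as it orients the rest. At $t=t_0+\frac12$ one has $u_{ari}\equiv0$, while each coalescing pair reduces to $2\cdot 1_{[x_0-1/2,x_0)}-2\cdot 1_{[x_0,x_0+1/2)}$, which is exactly the sum of the two isolated quasi-particles (a right-mover from $x_0-1\in MA^{+}$ and a left-mover from $x_0+1\in MA^{-}$) that were deleted in passing from $MA^{\pm}$ to $MA^{\pm}_{iso}$; hence $u_{iso}+u_{coa}$ reconstructs the function $u(t_0+\frac12,\cdot,\omega)$ of the previous subsection. At $t=t_0+1$ one has $u_{coa}\equiv0$, the arising pairs are fully formed and match the $A(\theta',act',\omega,t_0+1)$ term of the target verbatim, so the whole content reduces to the two set identities $M^{+}(\theta')=\{z+1:z\in MA^{+}_{iso}\}$ and $M^{-}(\theta')=\{z'-1:z'\in MA^{-}_{iso}\}$. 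These are precisely the unwinding of the one-step map \eqref{ABDF map}: reading off $\theta'(y)$, the positive part $\max(\theta(y-1),0)+act(y-1)\varkappa(t_0,y-1)$ lies in $\{0,1\}$ and equals $1$ iff $y-1\in MA^{+}$ (the two summands cannot both be $1$, since $\theta(y-1)=1$ forces $act(y-1)=0$ by Lemma~\ref{lemma key ident}), while the negative part lies in $\{-1,0\}$ and is nonzero iff $y+1\in MA^{-}$, i.e.\ iff $y\in C$; thus $\theta'(y)=1$ exactly when $y-1\in MA^{+}$ and $y\notin C$, which is $y-1\in MA^{+}_{iso}$, and symmetrically for $\theta'(y)=-1$.

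For the disjointness in (i) I would argue as in Proposition~\ref{prop first 1/2}, each molecule living in a moving window of width at most $1$ whose centre is an integer site. The isolated right- and left-quasi-particles stay apart as before: by the parity rules of $\Lambda$ (Definition~\ref{def ABDF config preliminary}) any $z\in MA^{+}$ and $z'\in MA^{-}$ with $z'>z$ satisfy $z'\geq z+2$, and the excision encoded in $MA^{\pm}_{iso}$ removes the case $z'=z+2$ (which forces $z+1\in C$), leaving $z'\geq z+3$, so the width-$\frac12$ supports meet at most at an endpoint over the half-step. The genuinely new interactions are isolated-versus-coalescing and, above all, arising-versus-everything. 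Here the clean quantitative input is that for two integer-centred molecules with half-widths $r_A,r_B\le\frac12$ and centres differing by at least $1$, overlap would require $r_A+r_B>1$ and hence can occur only at a single point; since an arising pair has half-width $\tau=t-t_0-\frac12\in[0,\frac12]$ and a coalescing pair has half-width $\frac12-\tau$, this covers their mutual disjointness, and a short moving-window computation (using $\tau\le\frac12$) covers an isolated mover against a centred molecule, the relevant centres being distinct because arising sites carry $\theta'=0$ whereas the target sites $z\pm1$ of surviving movers carry $\theta'=\pm1$. The one place where mere integrality is not enough is when a shrinking coalescence and a growing arising pair sit at neighbouring sites, and it is exactly here that Corollary~\ref{corollary coalescence arising} is invoked: a coalescence site is never simultaneously an arising site, nor adjacent to one, which prevents the two from ever meeting before $t_0+1$.

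I expect the disjointness analysis of (i) to be the main obstacle: it is a finite but laborious case distinction, considerably richer than Proposition~\ref{prop first 1/2}, since one must simultaneously control moving isolated windows, shrinking coalescing windows and growing arising windows, and the critical adjacency cases depend on the nonlocal structure of $\Lambda$ and on Corollary~\ref{corollary coalescence arising}. By contrast, the endpoint identities at $t_0+1$, though conceptually the heart of the statement (this is where the ABDF dynamics enters), follow transparently once \eqref{ABDF map} is combined with Lemma~\ref{lemma key ident} and the membership $\theta'\in\Lambda$ guaranteed by Theorem~\ref{thm:conjugacy}.
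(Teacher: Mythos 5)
Your proposal is correct and follows essentially the same route as the paper's proof: split $u$ into isolated, coalescing and arising molecules (each a weak solution from \autoref{Sect Burgers}), check the endpoint matching at $t_0+\frac12$ and $t_0+1$, and obtain pairwise disjointness of supports from the parity rules of $\Lambda$, the excision defining $MA^{\pm}_{iso}$, the values of $\theta'$ at the sites $z\pm1$, and \autoref{corollary coalescence arising} for the hardest case of an arising pair at a coalescence point, concluding via \autoref{prop merge in space-time} and \autoref{prop merge in time}. Your explicit unwinding of \eqref{ABDF map} into the set identities $M^{+}(\theta')=\left\{z+1:z\in MA^{+}_{iso}\right\}$, $M^{-}(\theta')=\left\{z'-1:z'\in MA^{-}_{iso}\right\}$ usefully fills in what the paper dismisses as ``easily checked''; the only slight inaccuracy is that \autoref{corollary coalescence arising} excludes coincidence of an arising site with a coalescence point rather than adjacency, but adjacency is harmless anyway by your half-width bound, so nothing is lost.
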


\begin{proof}
The fact that, for almost all $x$, $\left(  u_{iso}+u_{coa}+u_{ari}\right)
\left(  t,x,\omega\right)  $ coincides with the functions $u\left(  t_0+\frac{1}%
{2},x,\omega\right)  $ and $u\left(  t_0+1,x,\omega\right)  $ defined above
can be easily checked -- notice that $u_{coa}\left(  t_0+1,x,\omega\right)
=0$ and $u_{ari}\left(  t_0+\frac12,x,\omega\right)  =0$. 
Considered by itself, each term of $u_{iso}$ is a weak solution on $\left[  t_0+\frac12%
,t_0+1\right]  $; similarly, each $u_{coa}^{\left(  x_0\right)  }\left(
t,x,\omega\right)  $ is a coalescing pair on $\left[  t_0+\frac12%
,t_0+1\right]  $ and each $u_{ari}^{\left(  x_0\right)  }\left(
t,x,\omega\right)  $ is an arising pair on $\left[  t_0+\frac12%
,t_0+1\right]  $. Thus the sum of all these functions is a weak solution if
they have disjoint supports.

Elements $u_{iso}^{(z,+)  }\left(  t,x,\omega\right)  $ have
supports of the form $[x\left(  t\right)  ,x\left(  t\right)  +\frac12)$
with $x\left(  t\right)  =z+\left(  t-t_0\right)  $; and $[x'\left(
t\right)  -\frac12,x'\left(  t\right)  )$ with $x'\left(
t\right)  =z'-\left(  t-t_0\right)  $ for $u_{iso}^{\left(
z',-\right)  }\left(  t,x,\omega\right)  $.
Supports of functions $u_{iso}^{(z,+)  }\left(  t,x,\omega
\right)  $ cannot intersect each other, since quasi-particles move in parallel;
the same holds for $u_{iso}^{\left(  z',-\right)  }\left(  t,x,\omega\right)$, among themselves.

The support of a function $u_{iso}^{(z,+)  }\left(
t,x,\omega\right)  $ cannot intersect the support of a function $u_{iso}%
^{\left(  z',-\right)  }\left(  t,x,\omega\right)  $ for the following
reason. It is not possible if $z\geq z'$, because they move in opposite directions. 
Keeping in mind that we consider $t\in\left[  t_0+\frac12,t_0+1\right]  $, 
the same argument applies when $z=z'-1$. 
If $z=z'-2$, then $x_0:=z+1$
is of class $C\left(  \theta,act,\omega,t_0\right)  $, hence $z $ and
$z'$ cannot be in $MA_{iso}^{+}\left(  \theta,act,\omega
,t_0\right)  $ and $MA_{iso}^{-}\left(  \theta,act,\omega,t_0\right)  $
respectively. It remains to discuss the case $z\leq z'-3$. But now the supports
$[x\left(  t\right)  ,x\left(  t\right)  +\frac12)$ and $[x^{\prime
}\left(  t\right)  -\frac12,x'\left(  t\right)  )$ do not have
sufficient time to meet, for $t\in\left[  t_0+\frac12,t_0+1\right]  $.
Summarizing, we have proved that all terms of $u_{iso}\left(  t,x,\omega
\right)  $ have disjoint supports.

Let $x_0\in C\left(  \theta,act,\omega,t_0\right)$. Its
corresponding coalescing pair $u_{coa}^{\left(  x_0\right)  }\left(
t,x,\omega\right)  $; its support has the form $[x_0-\left(  t_0%
+1-t\right)  ,x_0+\left(  t_0+1-t\right)  )$, contained in $[x_0%
-\frac12,x_0+\frac12)$ for $t\in\left[  t_0+\frac12%
,t_0+1\right]  $. 
These supports are clearly disjoint when $x_0$ varies in $C\left(\theta,act,\omega,t_0\right) $.
They are also disjoint from any element of $u_{iso}\left(  t,x,\omega\right) $:
let us see why, in
the case of a function $u_{iso}^{(z,+)  }\left(  t,x,\omega
\right)  $. Since $x_0\in C\left(  \theta,act,\omega,t_0\right)  $,
$x_0-1$ cannot be of class $MA_{iso}^{+}\left(  \theta,act,\omega
,t_0\right)  $. Thus we need to have $z<x_0-1$ and $u_{iso}^{\left(
z,+\right)  }\left(  t,x,\omega\right)  $ cannot reach the coalescing pair in
the time interval $\left[  t_0+\frac12,t_0+1\right]  $, for the same
reason why different points of $MA_{iso}^{+}\left(  \theta,act,\omega
,t_0\right)  $ cannot lead to intersections.

Finally, let us consider a point $x_0\in A\left(  \theta',act',\omega,t_0+1\right)$ 
where a new arising pair starts to exists at time $t_0+\frac12$,
and the associated function $u_{ari}^{\left(  x_0\right)  }\left(  t,x,\omega\right)  $. 
Let us first discuss the case of $z\in MA_{iso}^{+}\left(  \theta,act,\omega,t_0\right)$. 
If $z\geq x_0$ or $z\leq x_0-2$ there is no intersection: the
difficult case is $z=x_0-1$. But in such a case, having excluded by $MA_{iso}^{+}\left(  \theta,act,\omega,t_0\right)$ 
the possibility of coalescing points, we should have $\theta'(x_0)=1\neq 0$,
in contradiction with $x_0\in A(\theta',act',\omega,t_0+1)$.
Hence this case does not exist. Points $z\in MA_{iso}^{-}\left(
\theta,act,\omega,t_0\right)  $ are similar.

The hardest case is when $x_0\in A\left(  \theta',act',\omega,t_0+1\right)$ also belongs to $C\left(  \theta,act,\omega,t_0\right)  $. 
In plain words, the question is whether a pair may arise in a point of coalescence. 
This case is solved by \autoref{corollary coalescence arising}, implying that if 
$x_0\in C\left(  \theta,act,\omega,t_0\right)$, then
$x_0\notin A\left(  \theta',act',\omega,t_0+1\right)$. 
Indeed, assuming the former, by definition $x_0-1,x_0+1\in A(\theta,act,\omega,t_0)$,
and by \autoref{corollary coalescence arising} this implies $act'(x_0)=0$, 
so $x_0\notin A\left(  \theta',act',\omega,t_0+1\right)$.
This rules out the last possible intersection of supports, and the proof is complete.
\end{proof}

\subsection{Main Result}
Merging the statements of \autoref{prop first 1/2} and \autoref{prop second 1/2}, 
along with the simple claim of \autoref{prop theta from u}, we finally get the main result of this work:

\begin{theorem}\label{thm:mainresult}
Given, at time $t_0=0$, an element $(\theta,act)  \in\Lambda$
and the section $\left\{  \omega\left(  0,x\right)  ;x\in\Z\right\}
$, define $u_{0}\left(  x,\omega\right)  :=u\left(  0,x,\omega\right)  $,
following \autoref{Def u_0}.

Construct the stochastic process $\left(  \theta\left(  t_0,\omega\right)
,act\left(  t_0,\omega\right)  \right)  $, $t_0\in\N$, by setting%
\[
\left(  \theta\left(  t_0,\omega\right)  ,act\left(  t_0,\omega\right)
\right)  :=\phi_{\text{ABDF}}\left(  t_0,\omega\right)  \left(
\theta,act\right)
\]
namely by performing the ABDF random dynamics.

Define the stochastic process $u\left(  t,x,\omega\right)  $, $t\in
\lbrack0,\infty)$, $x\in\R$ as follows. For every $t_0\in\N%
$, define $u(t_0,x,\omega)  $ from \autoref{Def u_0}
with respect to $(\theta,act)  $ given by $\left(  \theta\left(
t_0,\omega\right)  ,act\left(  t_0,\omega\right)  \right)  $; define
$u\left(  t,x,\omega\right)  $ for $t\in\left[  t_0,t_0+\frac{1}%
{2}\right]  $ by \autoref{prop first 1/2}; finally define $u\left(
t,x,\omega\right)  $ for $t\in\left[  t_0+\frac12,t_0+1\right]  $ by
\autoref{prop second 1/2}.

Then $u\left(  t,x,\omega\right)  $ is a weak solution of Burgers' equation.
\end{theorem}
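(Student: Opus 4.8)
The plan is to exhibit $u(\cdot,\cdot,\omega)$ as the time-concatenation of the half-interval weak solutions furnished by \autoref{prop first 1/2} and \autoref{prop second 1/2}, and to splice them together with \autoref{prop merge in time}. By construction, on each interval $\left[t_0,t_0+\frac12\right]$ the function $u$ is precisely the weak solution of \autoref{prop first 1/2} attached to the configuration $\left(\theta(t_0,\omega),act(t_0,\omega)\right)$, and on each interval $\left[t_0+\frac12,t_0+1\right]$ it is the weak solution of \autoref{prop second 1/2}. Thus on every half-integer subinterval $u$ already satisfies \autoref{def weak solution}, and everything reduces to verifying the matching hypothesis $u(\bar t,\cdot)=v(\bar t,\cdot)$ of \autoref{prop merge in time} at each junction time.

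At a half-integer $t_0+\frac12$ the match is immediate: \autoref{prop first 1/2} terminates at the profile $u\left(t_0+\frac12,\cdot,\omega\right)$, and \autoref{prop second 1/2} is designed to begin there --- its proof records that $u_{ari}\left(t_0+\frac12,\cdot,\omega\right)=0$ and $u_{coa}\left(t_0+1,\cdot,\omega\right)=0$, so the interpolating family passes through $u\left(t_0+\frac12,\cdot,\omega\right)$ and $u(t_0+1,\cdot,\omega)$ at its two ends. The substantive match is at integer times $t_0+1$: there \autoref{prop second 1/2} outputs the \autoref{Def u_0} profile attached to the one-step ABDF image $\left(\theta',act'\right)$ of the time-$t_0$ state, whereas the next cell $\left[t_0+1,t_0+\frac32\right]$ is launched from the \autoref{Def u_0} profile attached to $\left(\theta(t_0+1,\omega),act(t_0+1,\omega)\right)=\phi_{\text{ABDF}}(t_0+1,\omega)(\theta,act)$. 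By the cocycle property of $\phi_{\text{ABDF}}$ the latter is exactly the one-step ABDF image of the time-$t_0$ state, so the two configurations coincide; since \autoref{prop theta from u} reconstructs $(\theta,act)$ from the profile, equality of configurations is equivalent to equality of the two profiles, and the match follows.

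With the matching established I would conclude by a finite induction on junctions. Fixing $T>0$, the interval $[0,T]$ meets only finitely many half-integer cells, so finitely many applications of \autoref{prop merge in time} --- first across each midpoint $t_0+\frac12$, then across each integer $t_0+1$ --- assemble a single weak solution on $[0,T]$. As \autoref{def weak solution} is tested against functions of compact support in time, a function that is a weak solution on $[0,T]$ for every $T$ is one on $[0,\infty)$, which is the claim.

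The step I expect to be the crux is the integer-time matching: confirming that the target configuration $\left(\theta',act'\right)$ built into \autoref{prop second 1/2} really is the next ABDF state, with the correct bookkeeping of which noise slice $\omega(\cdot,\cdot)$ drives the newly arising pairs, and the handling of sites that are simultaneously coalescence points and candidate creation points. This last degeneracy --- whether a pair can be born exactly where another annihilates --- is precisely what \autoref{corollary coalescence arising} excludes, and it is already invoked within the proof of \autoref{prop second 1/2}; so that obstacle is in fact dispatched there, and the present argument only has to assemble the pieces.
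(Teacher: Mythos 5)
Your proposal is correct and follows essentially the same route as the paper: the paper's proof of \autoref{thm:mainresult} is precisely the assembly of \autoref{prop first 1/2} and \autoref{prop second 1/2} via \autoref{prop merge in time}, with the endpoint matching at integer times built into the statement of \autoref{prop second 1/2} (whose target profile is by construction the \autoref{Def u_0} profile of the one-step ABDF image, which the cocycle property identifies with the next state of the process). Your explicit treatment of the junction matching, the finite induction on $[0,T]$, and the passage to $[0,\infty)$ merely spells out bookkeeping the paper leaves implicit; note only that the appeal to \autoref{prop theta from u} is unnecessary for the matching step, since equality of configurations yields equality of profiles directly from \autoref{Def u_0}.
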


We have thus shown that, given a realization of the ABDF process, we construct
a weak solution of Burgers' equation; for almost every $\omega$, from this weak
solution it is possible to reconstruct the underlying ABDF realization, by
\autoref{prop theta from u}.

\begin{remark}
The stochastic process $u$ so defined is adapted to the noise filtration
shifted by $\frac12$. Namely, if $\F_{t}$ is the natural
filtration of the noise, $u\left(  t,\cdot\right)  $ is $\F_{t+\frac12}$-adapted, 
due to the creation mechanism that starts at
half-integer times. This anticipation is just instrumental, and not a deep phenomenon. 
One can develop an alternative construction such that $u\left(  t,\cdot\right)  $ is
$\F_{t}$-adapted, just shifting time by $\frac12$, or more precisely
starting to create new particles at integer times and completing annihilation
at half-integer times. However, we deem the construction just described more elegant.
\end{remark}

\section{Final Remarks on TASEP, Burger's Equation and KPZ Universality Class}

Among the most striking recent results on stochastic systems is the first quite complete understanding 
of the KPZ fixed point as a scaling limit of fluctuations of the height function associated to TASEP. 
Height functions of models in the KPZ universality class are conjectured to converge in the 1:2:3 scaling limit,
\begin{equation*}\label{123scaling}
h(t,x)\mapsto h^\epsilon(t,x)= \epsilon^{1/2} h (\epsilon^{-3/2}t,\epsilon^{-1}x)-C_\epsilon t, \quad \epsilon\downarrow 0,
C_\epsilon\uparrow\infty,
\end{equation*}
to a universal, scale invariant limit process, characterized as a Markov process 
by its transition probabilities in \cite{Matetski2016}.
Although we did not discuss scaling limits, it is essential to refer to
the recent works \cite{Nica2020,Quastel2019} on transition probabilities
of KPZ fixed point obtained as limits of the ones of TASEP, see also \cite{Arai2020} for the discrete-time setting.
The $1:2:3$ scaling we referred to above identifies fluctuations,
and it is worth recalling that macroscopic limits of models such as TASEP
are classically known to be solutions of nonlinear conservation laws, \cite{rost1981non,Rezakhanlou1991}.

The Kardar-Parisi-Zhang (KPZ) equation, introduced in \cite{kardar1986dynamic},
\begin{equation}\label{eq:kpzequation}
\partial_t h=\nu \partial_x^2 h+\lambda (\partial_x h)^2+\sigma \xi, \quad \nu,\lambda,\sigma>0,
\end{equation}
where $\xi$ denotes space-time white noise,
is not invariant under the 1:2:3 scaling. Its solution theory, initiated in \cite{Bertini1997},
has been the starting point of recent breakthrough developments in stochastic PDE theory, \cite{Hairer2013,Gubinelli2017}. Solutions to the KPZ equation are special models in the KPZ universality class,
as they are expected to describe the unique heteroclinic orbit between the KPZ fixed point and the Gaussian
Edwards-Wilkinson fixed point, \cite{Quastel2020}. 
Under the 1:2:3 scaling, the diffusion and noise terms of \eqref{eq:kpzequation}
vanish, formally leading to the Hamilton-Jacobi equation
\begin{equation}\label{eq:hamiltonjacobi}
\partial_t h=\lambda (\partial_x h)^2.
\end{equation}

This, informally, suggests that the KPZ fixed point can be understood as a (stochastic) solution to \eqref{eq:hamiltonjacobi},
corresponding to Burgers' equation \eqref{eq:burgers} with $\lambda=-1$ and $u=\partial_x h$.
However, as pointed out in \cite{Matetski2016}, entropy solutions to \eqref{eq:hamiltonjacobi} given by the Hopf-Lax formula,
\begin{equation*}
h(t,x)=\sup_y \pa{h(0,y)-\frac{(x-y)^2}{4\lambda t}},
\end{equation*}
are not suited to describe the KPZ fixed point. 
Indeed, entropy solutions would not preserve the regularity of Brownian motion, unlike the KPZ fixed point. 
In addition, since the KPZ fixed point has the space regularity of Brownian motion, the nonlinear term of \eqref{eq:hamiltonjacobi} is ill-posed. The possibility of a different kind of weak solutions to \eqref{eq:hamiltonjacobi} describing the KPZ fixed point was left open in \cite{Matetski2016}.

Our result might thus be regarded as hinting to a relation between the weak, non-entropic, intrinsically stochastic
solutions of Burgers' equation we built and linked with discrete-time TASEP, and the KPZ fixed point.
However, our arguments yield a bijection of models before any scaling limit is considered,
and even conjecturing how non-entropic, intrinsically stochastic Burgers' solution might describe --or simply relate to--
the KPZ fixed point, seems very difficult.
We do mention it since the problem of finding an equation satisfied by the KPZ fixed point remains completely open,
and non-entropic weak solutions might be the right objects to consider
(\emph{cf.} \cite{Bakhtin2018} and remarks in the introduction of \cite{Corwin2015}).

\section*{Acknowledgements} 
BG acknowledges support by the Max Planck Society through the Max Planck Research Group
\textit{Stochastic partial differential equations} and by the Deutsche Forschungsgemeinschaft (DFG,
German Research Foundation) through the CRC 1283 \textit{Taming uncertainty and profiting from
randomness and low regularity in analysis, stochastics and their applications}.


\end{document}